\DeclareMathAlphabet{\mathpzc}{OT1}{pzc}{m}{it}
\newtheorem{theorem}{Theorem}[section]
\newtheorem{proposition}[theorem]{Proposition}
\newtheorem{corollary}[theorem]{Corollary}
\newtheorem{lemma}[theorem]{Lemma}
\theoremstyle{definition}
\newtheorem{definition}[theorem]{Definition}
\newtheorem{example}[theorem]{Example}
\newtheorem{remark}[theorem]{Remark}
\title[Building ideals of two-Lipschitz operators]
{Building ideals of two-Lipschitz operators between mertic and Banach spaces}
\author[D. Achour, E. Dahia]{Dahmane Achour, Elhadj Dahia}
\address{D. Achour\\
	Laboratoire d'Analyse Fonctionnelle et G\'{e}om\'{e}trie des Espaces, University of M'sila \\
	28000 M'sila,
	Algeria.}
\email{dahmane.achour@univ-msila.dz}
\address{Elhadj  Dahia\\
	Laboratoire de Math\'{e}matiques et Physique Appliqu\'{e}es, \'{E}cole
	Normale Sup\'{e}rieure de Bousaada, 28001 Bousaada, Algeria. And Laboratoire d'Analyse Fonctionnelle et G\'{e}om\'{e}trie des Espaces, University of M'sila\\
	28000 M'sila,
	Algeria.}
\email{dahia.elhadj@ens-bousaada.dz}
\date{\today}
\begin{document}
\begin{abstract}
 In this paper, we present and characterize the injective hull of a two-Lipschitz operator ideals and the definition of two-Lipschitz dual operator ideal. Also we introduce two methods for
 creating ideals of two-Lipschitz operators from a pair of Lipschitz operator
 ideals. Namely, Lipschitzization and factorization method. We show the
 closedness, the injectivity and the symmetry of these two-Lipschitz ideals
 according to the closedness, injectivity and symmetry of the corresponding
 Lipschitz operator ideals. Some illustrative examples are given.

 \textbf{Key words and phrases}: \textit{Lipschitz operator ideal, non-linear compact mapping, two-Lipschitz mapping, two-Lipschitz operator ideal}
 
 \textbf{2020 Mathematics Subject Classification}: \textit{Primary 47L20, 47B10; Secondary 26A16,47L20}
 
\end{abstract}
\maketitle

\section{Introduction}
The concept of operator ideals has emerged as a powerful method for
exploring and categorizing linear operators that operate between Banach
spaces. The principles of linear theory have extended to Lipschitz
operators, giving rise to the concepts of Lipschitz operator ideals. Farmer
and Johnson provided the initial framework for a Lipschitz theory in 2009
(see \cite{FG09}). Achour et al., in \cite{ARSY16}, presented an axiomatic
theory for Banach spaces-valued Lipschitz mappings in the year 2016 (see
also \cite{inj} for Lipschitz operator ideals between pointed metric
spaces). It should be noted that in recent years, various authors have
successfully extended several operator ideals to the Lipschitz setting (see 
\cite{YrAdPr}, \cite{APY2017}, \cite{ADS2018}, \cite{pcom}, \cite{AM}, \cite
{CCJV16}, \cite{Dahia1}, \cite{Tiaiba}, and the references therein).

The definition of real two-Lipschitz maps acting in a cartesian product of
two pointed metric spaces was introduced by S\'{a}nchez P\'{e}rez in \cite
{En15}. A detailed and systematic study of these mappings with values in a
Banach space is given recently by Hamidi et al. in \cite{Hamidietall}, where
the authors introduce the new concept of two-Lipschitz operator ideals
between pointed metric spaces and Banach space. These ideals constitute
classes of two-Lipschitz operators that exhibit stability when subjected to
the composition of a linear operator on the right and two Lipschitzian
operators on the left. This attempt marked an initial effort to extend the
theory of ideals of bilinear operators into the two-Lipschitz settings. A
numerous notable examples and further results, are detailed in \cite{Hamidietall} and \cite{Dahia}. Note that the
notion of two-Lipschitz mappings was initially introduced by Dubei et al. 
\cite{MEA} as those mappings that are Lipschitz in each variable.
The aim of this work is to defined some procedures which assign new
two-Lipschitz operator ideals from a given one or from a pair of Lipschitz
operator ideals. The essence is that, depending on the Lipschitz operator
ideals, it is possible that there is not a unique natural generalization to
the two-Lipschitz contexts. Actually, these procedures are inspired from the
general procedures to construct ideals of multilinear operators starting
from a given linear operator ideal, was introduced by Pietsch in \cite{99}
(see also \cite{9}).

Our results are presented as follows. After the introductory one, in Section 2 we fix notation and we recall the most important results on the theory of
Lipschitz operators and two-Lipschitz operators that we will use throughout
the manuscript. Section 3 is devoted to the closedness and injectivity of a
given two-Lipschitz operator ideals. After showing that the closure of a
two-Lipschitz operator ideal is a closed two-Lipschitz ideal, we introduce
and characterize the two-Lipschitz injective hull of two-Lipschitz operator
ideals defined between cartesian product of two pointed metric spaces and
Banach space. In Section 4, we define the two-Lipschitz dual of a given operator ideal, obtaining in this way an ideal of two-Lipschitz operators created by the composition method. In the last section, which is the core of the article, we present general techniques to generate two-Lipschitz operator ideals from given Lipschitz operator ideals. That is Lipschitzization method,
factorization method and strong factorization method. We show the
closedness, the injectivity and the symmetry of these two-Lipschitz ideals
according to closedness, injectivity and symmetry of the corresponding
Lipschitz operator ideals.

\section{Notation and preliminaries}
From now on, unless otherwise stated, $X,X_{1},X_{2},W,Y$ and $Z$ will
denote pointed metric spaces with base point $0$ that is, $0$ is any
arbitrary fixed point of $X$ and metric will be denoted by $d$. We denote by 
$B_{X}=\left\{ x\in X:d(x,0)\leq 1\right\} $. Also, $E,F,G_{1},G_{2}$ and $G$
denote Banach spaces over the same field $\mathbb{K}$ (either $\mathbb{R}$
or $\mathbb{C}$) with dual spaces $E^{\ast }$. A Banach space $E$ is a
pointed metric space with distinguished point $0$ (the null vector) and
metric $d(x,x^{\prime })=\Vert x-x^{\prime }\Vert $. With $Lip_{0}(X,Y)$ we
denote the set of all Lipschitz mappings from $X$ to $Y$ such that $T(0)=0$.
In particular $Lip_{0}(X,E)$ is a Banach space under the Lipschitz norm%
\begin{equation*}
Lip(T)=\inf \left\{ C>0:\left\Vert T(x)-T(x^{\prime })\right\Vert \leq
Cd\left( x,x^{\prime }\right) ;\forall x,x^{\prime }\in X\right\} .
\end{equation*}%
The space $Lip_{0}(X,\mathbb{K})$ is called the Lipschitz dual of $X$ and it
will be denoted by $X^{\#}$. It is worth mentioning that the Banach space of
all linear operators $\mathcal{L}(E,F)$ is a subspace of $Lip_{0}(E,F)$ and,
so, $E^{\ast }$ is a subspace of $E^{\#}$. A molecule on $X$ is a finitely
supported function $m:X\longrightarrow \mathbb{R}$ that satisfies $
\sum\limits_{x\in X}m\left( x\right) =0$. The set $\mathcal{M}(X)$ of all
molecules is a vector space, those of the form $m=\sum_{j=1}^{n}\lambda
_{j}m_{x_{j}x_{j}^{\prime }},$ where $m_{xx^{\prime }}=\chi _{\left\{
	x\right\} }-\chi _{\left\{ x^{\prime }\right\} }$ with $\lambda _{j}\in 
\mathbb{R}$ and $x_{j},x_{j}^{\prime }\in X$ and $\chi _{A}$ is the
characteristic function of the set $A$. The completion of $\mathcal{M}(X)$,
when endowed with the norm $\left\Vert m\right\Vert _{\mathcal{M}%
	(X)}=\sum_{j=1}^{n}\left\vert \lambda _{j}\right\vert d(x_{j},x_{j}^{\prime
}),$ where the infimum is taken over all representations $%
\sum_{j=1}^{n}\lambda _{j}m_{x_{j}x_{j}^{\prime }}$ of the molecule $m,$ is
denoted \AE $\left( X\right) $ and called Arens--Ells space associated to $X$
(see \cite{A.E56}). Consider the canonical Lipschitz injection map $\delta
_{X}:X\longrightarrow $\AE $(X)$ defined by $\delta _{X}(x)=m_{x0}$
isometrically embeds $X$ in \AE $\left( X\right) $. If we consider $T\in
Lip_{0}(X,E)$, then $T$ always factors through \AE $(X)$ as $T=T_{L}\circ
\delta _{X}:X\longrightarrow $\AE $(X)\longrightarrow E.$ The map $T_{L}:$ 
\AE $(X)\longrightarrow E$ is the unique continuous linear operator
(referred to as the linearization of $T$) satisfies $T=T_{L}\circ \delta _{X}
$ and $\Vert T_{L}\Vert =Lip(T)$ (see \cite[Theorem 3.6]{Weav2}).

The notion of linear operator ideal extends (by Achour et al. in \cite{ARSY16}) to
Lipschitz operators as follows.

\begin{definition}
A Lipschitz operator ideal $\mathcal{I}_{{L}ip}$ is a subclass of $Lip_{0}$ such that for every pointed
metric space $X$ and every Banach space $E$ the components
$$
\mathcal{I}_{{L}ip}(X,E):=Lip_{0}(X,E)\cap\mathcal{I}_{{L}ip}
$$
satisfy
\begin{enumerate} [\upshape (i)]
	\item $\mathcal{I}_{{L}ip}(X,E)$ is a linear subspace of $Lip_{0}(X,E)$.
	\item $vg\in\mathcal{I}_{{L}ip}(X,E)$ for $v\in E$ and $g\in X^{\#}$.
	\item The ideal property: if $S\in Lip_{0}(Y,X)$, $T\in
	\mathcal{I}_{{L}ip}(X,E)$ and $w\in\mathcal{L}(E,F)$, then the composition
	$wTS$ is in $\mathcal{I}_{{L}ip}(Y,F)$.
\end{enumerate}
A Lipschitz operator ideal $\mathcal{I}_{{L}ip}$\ is a normed \ (Banach) Lipschitz operator ideal if
there is $\left\Vert .\right\Vert _{\mathcal{I}_{{L}ip}}:\mathcal{I}_{{L}
	ip}\longrightarrow\left[  0,+\infty\right[  $ that satisfies
\begin{enumerate}[\upshape (i')]
	\item For every pointed metric space $X$ and every Banach
	space $E$, the pair $(\mathcal{I}_{{L}ip}(X,E),\left\Vert .\right\Vert _{\mathcal{I}_{{L}ip}})$ is a normed (Banach) space and $Lip(T)\leq\Vert
	T\Vert_{\mathcal{I}_{{L}ip}}$ for all $T\in\mathcal{I}_{{L}ip}(X,E)$.
	\item $\left\Vert Id_{\mathbb{K}}:\mathbb{K} \longrightarrow\mathbb{K},Id_{\mathbb{K}}(\lambda)=\lambda\right\Vert
	_{\mathcal{I}_{{L}ip}}=1$.
	\item If $S\in Lip_{0}(Y,X)$, $T\in\mathcal{I}_{{L}ip}(X,E)$ and $w\in\mathcal{L}(E,F),$ then
	$\left\Vert wTS\right\Vert _{\mathcal{I}_{{L}ip}}\leq Lip(S)\left\Vert
	T\right\Vert _{\mathcal{I}_{{L}ip}}\left\Vert w\right\Vert .$
\end{enumerate}
\end{definition}

Regarding the two-Lipschitz operators, according to \cite[Definition 2.1]{Hamidietall}, a map $T:X_{1}\times
X_{2}\longrightarrow E$, from the cartesian product of two pointed metric
spaces $X_{1}$ and $X_{2}$ to a Banach space $E$, is called two-Lipschitz
operator if there is a constant $C>0$ such that for every pair of element $%
x_{1},x_{1}^{\prime }\in X_{1}$ and $x_{2},x_{2}^{\prime }\in X_{2},$

\begin{equation}
\left\Vert T\left( x_{1},x_{2}\right) -T\left( x_{1},x_{2}^{\prime }\right)
-T\left( x_{1}^{\prime },x_{2}\right) +T\left( x_{1}^{\prime },x_{2}^{\prime
}\right) \right\Vert \leq C.d\left( x_{1},x_{1}^{\prime }\right) d\left(
x_{2},x_{2}^{\prime }\right)   \label{defbilip}
\end{equation}
and $T(x_{1},0)=T(0,x_{2})=0.$ We will denote by $BLip_{0}\left( X,Y;E\right) $ the Banach space of all these mappings, under the norm
\begin{equation}
BLip\left( T\right) =\sup_{x_{1}\neq x_{1}^{\prime },x_{2}\neq x_{2}^{\prime
}}\frac{\left\Vert T\left( x_{1},x_{2}\right) -T\left( x_{1},x_{2}^{\prime
	}\right) -T\left( x_{1}^{\prime },x_{2}\right) +T\left( x_{1}^{\prime
	},x_{2}^{\prime }\right) \right\Vert }{d\left( x_{1},x_{1}^{\prime }\right)
	d\left( x_{2},x_{2}^{\prime }\right) }. \label{defbilip1}
\end{equation}

Recently, the notion of two-Lipschitz operators ideals was introduced by the
author, Hamidi et al. \cite{Hamidietall}.

\begin{definition}
	\label{defideal} A two-Lipschitz operator ideal between pointed metric
	spaces and Banach spaces, $\mathcal{I}_{BLip}$, is a subclass of $BLip_{0}$
	such that for every pointed metric spaces $X_{1},X_{2}$ and every Banach
	space $E$ the components 
	\begin{equation*}
	\mathcal{I}_{BLip}(X_{1},X_{2};E):=BLip_{0}(X_{1},X_{2};E)\cap \mathcal{I}%
	_{BLip}
	\end{equation*}
	satisfy
	
	\begin{enumerate}
		\item[(i)] $\mathcal{I}_{BLip}(X_{1},X_{2};E)$ is a linear subspace of $%
		BLip_{0}(X_{1},X_{2};E)$.
		
		\item[(ii)] For any $f_{1}\in X_{1}^{\#},$ $f_{2}\in X_{2}^{\#}$ and $e\in E$%
		, the map $f_{1}\cdot f_{2}\cdot e$ belongs to $\mathcal{I}%
		_{BLip}(X_{1},X_{2};E)$.
		
		\item[(iii)] The ideal property: if $f_{1}\in Lip_{0}(Z,X_{1})$, $f_{2}\in
		Lip_{0}(W,X_{2})$ , \newline $T\in \mathcal{I}_{BLip}(X_{1},X_{2};E)$ and $u\in 
		\mathcal{L}(E,F)$, then the composition $u\circ T\circ (f_{1},f_{2})$ is in $%
		\mathcal{I}_{BLip}(Z,W;F)$.
	\end{enumerate}
	
	A two-Lipschitz operator ideal $\mathcal{I}_{BLip}$ is a normed (Banach)
	two-Lipschitz operator ideal if there is $\Vert \cdot \Vert _{\mathcal{I}
		_{BLip}}:\mathcal{I}_{BLip}\longrightarrow \left[ 0,+\infty \right[ $ that
	satisfies
	
	\begin{enumerate}
		\item[(i')] For every pointed metric spaces $X,Y$ and every Banach space $E$
		, the pair $(\mathcal{I}_{BLip}(X_{1},X_{2};E),\Vert \cdot \Vert _{\mathcal{I%
			}_{BLip}})$ is a normed (Banach) space and $BLip(T)\leq \Vert T\Vert _{%
			\mathcal{I}_{B{L}ip}}$ for all $T\in \mathcal{I}_{BLip}(X_{1},X_{2};E)$.
		
		\item[(ii')] $\left\Vert Id_{\mathbb{K}^{2}}:\mathbb{K}\times \mathbb{K}%
		\longrightarrow \mathbb{K}:Id_{\mathbb{K}^{2}}\left( \alpha ,\beta \right)
		=\alpha \beta \right\Vert _{\mathcal{I}_{BLip}}=1$.
		
		\item[(iii')] If $f_{1}\in Lip_{0}(Z,X_{1})$, $f_{2}\in Lip_{0}(W,X_{2})$, $%
		T\in \mathcal{I}_{BLip}(X_{1},X_{2};E)$ and $u\in \mathcal{L}(E,F)$, the
		inequality $\Vert u\circ T\circ (f_{1},f_{2})\Vert _{\mathcal{I}_{BLip}}\leq
		\left\Vert u\right\Vert \Vert T\Vert _{\mathcal{I}%
			_{BLip}}Lip(f_{1})Lip(f_{2})$ holds.
	\end{enumerate}
\end{definition}
Note that if we assume that $\left\Vert \cdot \right\Vert _{BLip_{0}}$ is a
quasi-norm instead of a norm, we will say that $\left( \mathcal{I}
_{BLip},\left\Vert \cdot \right\Vert _{\mathcal{I}_{BLip}}\right) $ is a
quasi-normed two-Lipschitz operator ideal.

Now we recall the composition method, introduced in \cite{Hamidietall}, in
order to produce two-Lipschitz operator ideal $\mathcal{I\circ }BLip_{0}$
from a given linear operator ideal $\mathcal{I}$. Indeed, the two-Lipschitz
operator $T:X_{1}\times X_{2}\longrightarrow E$ belongs to $\mathcal{I\circ }
BLip_{0}(X_{1},X_{2};E)$ if there is a Banach space $F$, a two-Lipschitz
operator $S\in BLip_{0}(X_{1},X_{2};E)$ and a linear operator $u\in \mathcal{
	I}(F,E)$ such that $T=u\circ S$. This definition is equivalent to say that $
T_{L}\in \mathcal{I}($\AE $\left( X_{1}\right) \widehat{\otimes }_{\pi }$\AE $
\left( X_{2}\right) ,E)$. If $(\mathcal{I},\left\Vert \cdot \right\Vert _{
	\mathcal{I}})$ is a normed operator ideal we write
\begin{equation}
\left\Vert T\right\Vert _{\mathcal{I\circ }BLip_{0}}=\inf \left\Vert
u\right\Vert _{\mathcal{I}}BLip(S)=\left\Vert T_{L}\right\Vert _{\mathcal{I}
}, \label{compo}
\end{equation}
where the infimum is taken over all $u$, $S$ as above (see \cite[Theorem 3.5]{Hamidietall}).

\section{Closed and injective two-Lipschitz operator ideal}

Before giving the first example of two-Lipschitz operator ideal, let us see
that the closure of a two-Lipschitz operator ideal is a closed two-Lipschitz
operator ideals.
\begin{definition}
	Let $\mathcal{I}_{BLip}$ be a two-Lipschitz operator ideal. It will be
	denoted by $\overline{\mathcal{I}_{BLip}}$ the class of two-Lipschitz
	operators formed by components $\overline{\mathcal{I}_{BLip}}(X_{1},X_{2};E)$
	that are given by the closure of $\mathcal{I}_{BLip}(X_{1},X_{2};E)$ in $%
	BLip_{0}(X_{1},X_{2};E).$ The two-Lipschitz operator ideal$\ \mathcal{I}%
	_{BLip}$\ is said to be closed when $\overline{\mathcal{I}_{BLip}}=\mathcal{I%
	}_{BLip}.$
\end{definition}

\begin{proposition}
	If $\mathcal{I}_{BLip}$ is a two-Lipschitz operator ideal, then the class $%
	\overline{\mathcal{I}_{BLip}}$ defined by 
	\begin{equation*}
	\overline{\mathcal{I}_{BLip}}(X_{1},X_{2};E):=\overline{\mathcal{I}%
		_{BLip}(X_{1},X_{2};E)}^{BLip(\cdot )},
	\end{equation*}%
	for all pointed metric spaces $X_{1},X_{2}$ and Banach space $E,$ is a
	two-Lipschitz operator ideal.
\end{proposition}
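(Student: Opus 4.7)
The plan is to verify the three defining conditions (i), (ii), (iii) of Definition \ref{defideal} for the class $\overline{\mathcal{I}_{BLip}}$, using the fact that the closure is taken inside the Banach space $(BLip_0(X_1,X_2;E),BLip(\cdot))$ so that every element of $\overline{\mathcal{I}_{BLip}}(X_1,X_2;E)$ is automatically a two-Lipschitz operator.

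For condition (i), I would invoke the standard fact that in a normed space the closure of a linear subspace is itself a linear subspace; this is immediate from the continuity of addition and scalar multiplication with respect to $BLip(\cdot)$. For condition (ii), since $\mathcal{I}_{BLip}(X_1,X_2;E)\subseteq \overline{\mathcal{I}_{BLip}}(X_1,X_2;E)$ by construction, each rank-one type map $f_1\cdot f_2\cdot e$ is already in $\mathcal{I}_{BLip}(X_1,X_2;E)$ by axiom (ii) applied to $\mathcal{I}_{BLip}$, hence lies in the closure.

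The main step is condition (iii), and it reduces to establishing the continuity estimate
\begin{equation*}
BLip\bigl(u\circ S\circ(f_1,f_2)\bigr)\leq \|u\|\,Lip(f_1)\,Lip(f_2)\,BLip(S)
\end{equation*}
for every $S\in BLip_0(X_1,X_2;E)$, every $u\in\mathcal{L}(E,F)$ and Lipschitz maps $f_i$. This is a direct chain of inequalities: inserting $f_1,f_2$ into the defining expression \eqref{defbilip1}, pulling $u$ out using $\|u(\cdot)\|\leq \|u\|\cdot\|\cdot\|$, and bounding $d(f_i(z_i),f_i(z_i'))\leq Lip(f_i)\,d(z_i,z_i')$. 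Given this, I would take $T\in\overline{\mathcal{I}_{BLip}}(X_1,X_2;E)$ and a sequence $(T_n)\subset \mathcal{I}_{BLip}(X_1,X_2;E)$ with $BLip(T_n-T)\to 0$. Axiom (iii) for $\mathcal{I}_{BLip}$ gives $u\circ T_n\circ(f_1,f_2)\in \mathcal{I}_{BLip}(Z,W;F)$ for each $n$, while the continuity estimate applied to $S=T_n-T$ gives $BLip(u\circ T_n\circ(f_1,f_2)-u\circ T\circ(f_1,f_2))\to 0$. Thus $u\circ T\circ(f_1,f_2)$ lies in the closure $\overline{\mathcal{I}_{BLip}}(Z,W;F)$.

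No step is a genuine obstacle; the only thing to be careful about is keeping the continuity estimate clean, since it is what transports both linearity/closure arguments and the ideal property across the limit. Everything else is a formal consequence of working inside a Banach space of two-Lipschitz operators.
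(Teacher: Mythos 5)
Your proposal is correct and follows essentially the same route as the paper: the paper also treats (i) and (ii) as immediate and devotes its proof to condition (iii), using exactly the continuity estimate $BLip\bigl(u\circ T_{n}\circ(f_1,f_2)-u\circ T\circ(f_1,f_2)\bigr)\leq\|u\|\,BLip(T_n-T)\,Lip(f_1)\,Lip(f_2)$ to pass the ideal property through the limit.
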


\begin{proof}
	We check only the condition (iii) in the Definition \ref{defideal}. Let $%
	f_{1}\in Lip_{0}(Z,X_{1})$, $f_{2}\in Lip_{0}(W,X_{2})$ , $T\in \overline{%
		\mathcal{I}_{BLip}}(X_{1},X_{2};E)$ and $u\in \mathcal{L}(E,F)$. Then there
	exists a sequence $(T_{n})_{n}$ in $\mathcal{I}_{BLip}(X_{1},X_{2};E)$ such
	that $\underset{n\longrightarrow +\infty }{\lim }BLip(T_{n}-T)=0$. Now, in $%
	\mathcal{I}_{BLip}(X_{1},X_{2};E),$ consider the sequence $\left( u\circ
	T_{n}\circ (f_{1},f_{2})\right) _{n}.$ By using simple calculation, we get%
	\begin{eqnarray*}
		&&BLip\left( u\circ T_{n}\circ (f_{1},f_{2})-u\circ T\circ
		(f_{1},f_{2})\right)  \\
		&\leq &\left\Vert u\right\Vert BLip(T_{n}-T)Lip(f_{1})Lip(f_{2}),
	\end{eqnarray*}%
	showing that the sequence $\left( u\circ T_{n}\circ (f_{1},f_{2})\right) _{n}
	$ is convergent to $u\circ T\circ (f_{1},f_{2})$, and the proof follows.
\end{proof}
\begin{example}
	A two-Lipschitz operator $T:X_{1}\times X_{2}\longrightarrow E$ is said to
	be approximable, in symbols $T\in \overline{BLip_{0,\mathcal{F}}}%
	(X_{1},X_{2};E)$ if it is the limit of a sequence of finite type
	two-Lipschitz operators $(T_{n})_{n}\subset BLip_{0,\mathcal{F}%
	}(X_{1},X_{2};E)$ (see \cite[Definition 2.8]{Hamidietall} )in the two-Lipschitz norm $BLip(\cdot )$. That is%
	\begin{equation*}
	\overline{BLip_{0,\mathcal{F}}}(X_{1},X_{2};E):=\overline{BLip_{0,\mathcal{F}%
		}(X_{1},X_{2};E)}^{BLip(\cdot )}.
	\end{equation*}%
	Let now $\mathcal{I}_{BLip}$ be a given closed two-Lipschitz operator ideal.
	The above proposition and the following inclusions%
	\begin{equation*}
	\overline{BLip_{0,\mathcal{F}}(X_{1},X_{2};E)}\subset \overline{\mathcal{I}%
		_{BLip}(X_{1},X_{2};E)}=\mathcal{I}_{BLip}(X_{1},X_{2};E),
	\end{equation*}
	asserts that the class of approximable two-Lipschitz operators is the
	smallest closed two-Lipschitz operator ideal. In particular, $\left( 
	\overline{BLip_{0,\mathcal{F}}},BLip(\cdot )\right) $ is a Banach
	two-Lipschitz operator ideal.
\end{example}

In what follows, if $E$ is a Banach space, we consider the natural metric
injection $J_{E}:E\longrightarrow \ell _{\infty }(B_{E^{\ast }})$ given by $%
J_{E}(y)=\left( \left\langle y,y^{\ast }\right\rangle \right) _{y^{\ast }\in
	B_{E^{\ast }}}.$ Note that $\left\Vert J_{E}\right\Vert =1$ and by a simple calculation we obtain
\begin{equation}
BLip(T)=BLip(J_{E}\circ T)  \label{binj}.
\end{equation}
We propose a definition of the injective hull of a two-Lipschitz operator
ideal which extend that introduced by Stephani for the linear case in \cite
{Stephani}. Actually the injective hull defined a procedure which assign new Banach
two-Lipschitz operator ideal from a given one.

\begin{definition}
	Let $\mathcal{I}_{BLip}$ be a two-Lipschitz operator ideal. For pointed
	metric spaces $X_{1},X_{2}$ and a Banach space $E$, a two-Lipschitz operator 
	$T\in BLip_{0}(X_{1},X_{2};E)$ belongs to the two-Lipschitz injective hull
	of $\mathcal{I}_{BLip}$ if $J_{E}\circ T\in \mathcal{I}_{BLip}(X_{1},X_{2};
	\ell _{\infty }(B_{E^{\ast }}))$. The class of all two-Lipschitz from $
	X_{1},X_{2}$ to $E$ which belongs to the Lipschitz injective hull of $
	\mathcal{I}_{BLip}$ will be denoted by $\mathcal{I}
	_{BLip}^{inj}(X_{1},X_{2};E).$
\end{definition}

For any two-Lipschitz operator ideal $\mathcal{I}_{BLip}$, we may assign to $
\mathcal{I}_{BLip}^{inj}$ a real valued function $\left\Vert \cdot
\right\Vert _{\mathcal{I}_{BLip}^{inj}}$ defined by
\begin{equation*}
\begin{array}{cc}
\left\Vert T\right\Vert _{\mathcal{I}_{BLip}^{inj}}:=\left\Vert J_{E}\circ
T\right\Vert _{\mathcal{I}_{BLip}}.
\end{array}
\end{equation*}

Now we give a generalization of a well known characterization of the
injective hull of a linear operator ideal, which first appear in \cite%
{Stephani}. Since the proof follows in the same way as in the linear case we
omit it.

\begin{proposition}
	\label{prop1} Let $\left( \mathcal{I}_{BLip},\left\Vert \cdot \right\Vert _{%
		\mathcal{I}_{BLip}}\right) $ and $\left( \mathcal{I}_{BLip}^{\prime
	},\left\Vert \cdot \right\Vert _{\mathcal{I}_{BLip}^{\prime }}\right) $ be
	normed (Banach) two-Lipschitz operator ideals.
	
	\begin{enumerate}
		\item[(1)] $\mathcal{I}_{BLip}\subset \mathcal{I}_{BLip}^{inj}$ and $
		\left\Vert \cdot \right\Vert _{\mathcal{I}_{BLip}^{inj}}\leq \left\Vert 
		\cdot \right\Vert _{\mathcal{I}_{BLip}}$.
		
		\item[(2)] If $\mathcal{I}_{BLip}\subset \mathcal{I}_{BLip}^{\prime }$ and $%
		\left\Vert \cdot \right\Vert _{\mathcal{I}_{BLip}^{\prime }}\leq \left\Vert
		\cdot \right\Vert _{\mathcal{I}_{BLip}}$ then $\mathcal{I}%
		_{BLip}^{inj}\subset (\mathcal{I}_{BLip}^{\prime })^{inj}$ and $\left\Vert
		\cdot \right\Vert _{(\mathcal{I}_{BLip}^{\prime })^{inj}}\leq \left\Vert
		\cdot \right\Vert _{\mathcal{I}_{BLip}^{inj}}.$ 
	\end{enumerate}
\end{proposition}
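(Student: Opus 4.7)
The plan is to reduce both assertions to the very definition $\|T\|_{\mathcal{I}_{BLip}^{inj}}:=\|J_E\circ T\|_{\mathcal{I}_{BLip}}$, combined with the ideal axiom (iii) together with its normed refinement (iii'), exploiting the fact that $J_E:E\longrightarrow\ell_\infty(B_{E^*})$ is a linear operator of norm one (as recorded just before the definition of the injective hull and reflected in (\ref{binj})). Both parts will follow formally; the only non-trivial ingredient is the interaction between the ideal property and composition with $J_E$ on the left.

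For (1), I would take $T\in\mathcal{I}_{BLip}(X_1,X_2;E)$ and compose with $J_E$. Applying (iii) with $u=J_E\in\mathcal{L}(E,\ell_\infty(B_{E^*}))$ and $f_i=\mathrm{id}_{X_i}$ places $J_E\circ T$ in $\mathcal{I}_{BLip}(X_1,X_2;\ell_\infty(B_{E^*}))$, which by definition means $T\in\mathcal{I}_{BLip}^{inj}(X_1,X_2;E)$. The norm inequality then drops out of (iii') after recalling $\|J_E\|=1$ and $Lip(\mathrm{id}_{X_i})=1$, giving
\[
\|T\|_{\mathcal{I}_{BLip}^{inj}}=\|J_E\circ T\|_{\mathcal{I}_{BLip}}\leq \|J_E\|\,\|T\|_{\mathcal{I}_{BLip}}=\|T\|_{\mathcal{I}_{BLip}}.
\]

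For (2), the argument is purely formal. Given $T\in\mathcal{I}_{BLip}^{inj}(X_1,X_2;E)$, the operator $J_E\circ T$ lies in $\mathcal{I}_{BLip}$ by definition, and the hypothesised inclusion $\mathcal{I}_{BLip}\subset\mathcal{I}'_{BLip}$ transports it into $\mathcal{I}'_{BLip}$, so that $T\in(\mathcal{I}'_{BLip})^{inj}$. Evaluating the hypothesised norm comparison $\|\cdot\|_{\mathcal{I}'_{BLip}}\leq\|\cdot\|_{\mathcal{I}_{BLip}}$ at the element $J_E\circ T$ immediately produces the corresponding inequality for the injective-hull norms.

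No step presents a real obstacle; the whole proposition is bookkeeping on top of the definitions, which is consistent with the author's remark that it follows exactly as in the linear case. The only subtlety worth being alert to is well-definedness: one should check briefly that $\|\cdot\|_{\mathcal{I}_{BLip}^{inj}}$ is indeed a norm (not merely a seminorm), which uses (\ref{binj}) to conclude $BLip(T)=BLip(J_E\circ T)\leq\|J_E\circ T\|_{\mathcal{I}_{BLip}}=\|T\|_{\mathcal{I}_{BLip}^{inj}}$, so that vanishing of the injective-hull norm forces $T=0$.
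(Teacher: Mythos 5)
Your argument is correct and is exactly the standard one the authors have in mind when they say the proof "follows in the same way as in the linear case": part (1) is the ideal axioms (iii)/(iii') applied with $u=J_{E}$, $f_{i}=\mathrm{id}_{X_{i}}$ and $\left\Vert J_{E}\right\Vert =1$, and part (2) is the definition of the injective-hull norm evaluated at $J_{E}\circ T$ together with the hypothesised inclusion and norm comparison. Nothing is missing, and your closing remark that (\ref{binj}) yields $BLip(T)\leq \left\Vert T\right\Vert _{\mathcal{I}_{BLip}^{inj}}$ (hence definiteness of the injective-hull norm) is a worthwhile sanity check, though it belongs to the subsequent proposition rather than to this one.
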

The main result of this section is the following

\begin{proposition}
	If $\left( \mathcal{I}_{BLip},\left\Vert \cdot \right\Vert _{\mathcal{I}
		_{BLip}}\right) $ is a normed (Banach) two-Lipschitz operator ideal, then $
	\mathcal{I}_{BLip}^{inj}$ together with $\left\Vert \cdot \right\Vert _{
		\mathcal{I}_{BLip}^{inj}}$ is a normed (Banach) two-Lipschitz operator ideal.
\end{proposition}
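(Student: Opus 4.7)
The plan is to verify each of the axioms (i)--(iii) and (i')--(iii') of Definition \ref{defideal} for $\mathcal{I}_{BLip}^{inj}$, exploiting two facts throughout: that $J_{E}$ is linear and an isometric embedding, so that (\ref{binj}) holds and the map $T\longmapsto J_{E}\circ T$ is an isometric linear injection from $BLip_{0}(X_{1},X_{2};E)$ into $BLip_{0}(X_{1},X_{2};\ell _{\infty }(B_{E^{\ast }}))$, and that $\left\Vert \cdot \right\Vert _{\mathcal{I}_{BLip}^{inj}}$ is defined precisely as the pullback of $\left\Vert \cdot \right\Vert _{\mathcal{I}_{BLip}}$ along this injection. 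In particular, linearity of the component spaces and axiom (ii) (applied to $f_{1}\cdot f_{2}\cdot J_{E}(e)=J_{E}\circ (f_{1}\cdot f_{2}\cdot e)$) transfer immediately from $\mathcal{I}_{BLip}$ to $\mathcal{I}_{BLip}^{inj}$, and the positivity, homogeneity and triangle inequality of $\left\Vert \cdot \right\Vert _{\mathcal{I}_{BLip}^{inj}}$ are automatic.

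The substantive step, and the main obstacle, is the ideal property (iii) together with its quantitative version (iii'). Given $u\in \mathcal{L}(E,F)$, I need a linear operator $\widetilde{u}:\ell _{\infty }(B_{E^{\ast }})\longrightarrow \ell _{\infty }(B_{F^{\ast }})$ with $\Vert \widetilde{u}\Vert \leq \Vert u\Vert $ and $\widetilde{u}\circ J_{E}=J_{F}\circ u$. The natural candidate is, for $\xi =(\xi _{x^{\ast }})_{x^{\ast }\in B_{E^{\ast }}}\in \ell _{\infty }(B_{E^{\ast }})$,
\begin{equation*}
(\widetilde{u}\xi )_{y^{\ast }}=\Vert u\Vert \,\xi _{u^{\ast }(y^{\ast })/\Vert u\Vert }\qquad (y^{\ast }\in B_{F^{\ast }}),
\end{equation*}
(the case $u=0$ being trivial), where $u^{\ast }(y^{\ast })=y^{\ast }\circ u\in E^{\ast }$. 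A direct check shows that $\widetilde{u}$ is well defined and bounded by $\Vert u\Vert $, and that it intertwines the canonical injections, so that for $T\in \mathcal{I}_{BLip}^{inj}(X_{1},X_{2};E)$, $f_{1}\in Lip_{0}(Z,X_{1})$ and $f_{2}\in Lip_{0}(W,X_{2})$,
\begin{equation*}
J_{F}\circ u\circ T\circ (f_{1},f_{2})=\widetilde{u}\circ (J_{E}\circ T)\circ (f_{1},f_{2}).
\end{equation*}
The right-hand side lies in $\mathcal{I}_{BLip}(Z,W;\ell _{\infty }(B_{F^{\ast }}))$ by axiom (iii) for $\mathcal{I}_{BLip}$, whence $u\circ T\circ (f_{1},f_{2})\in \mathcal{I}_{BLip}^{inj}(Z,W;F)$; passing to $\left\Vert \cdot \right\Vert _{\mathcal{I}_{BLip}}$ and using (iii') together with $\Vert \widetilde{u}\Vert \leq \Vert u\Vert $ yields the corresponding norm estimate.

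The remaining items are short. Axiom (i') follows from $BLip(T)=BLip(J_{E}\circ T)\leq \Vert J_{E}\circ T\Vert _{\mathcal{I}_{BLip}}=\Vert T\Vert _{\mathcal{I}_{BLip}^{inj}}$. Axiom (ii') is obtained from Proposition~\ref{prop1}(1), giving $\Vert Id_{\mathbb{K}^{2}}\Vert _{\mathcal{I}_{BLip}^{inj}}\leq \Vert Id_{\mathbb{K}^{2}}\Vert _{\mathcal{I}_{BLip}}=1$, combined with $1=BLip(Id_{\mathbb{K}^{2}})\leq \Vert Id_{\mathbb{K}^{2}}\Vert _{\mathcal{I}_{BLip}^{inj}}$. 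For the Banach case, let $(T_{n})$ be Cauchy in $(\mathcal{I}_{BLip}^{inj}(X_{1},X_{2};E),\Vert \cdot \Vert _{\mathcal{I}_{BLip}^{inj}})$. Then $(J_{E}\circ T_{n})$ is Cauchy in $\mathcal{I}_{BLip}(X_{1},X_{2};\ell _{\infty }(B_{E^{\ast }}))$ and converges to some $S$ there; by (i'), it also converges to $S$ in $BLip(\cdot )$. Since $BLip(T_{n}-T_{m})=BLip(J_{E}\circ (T_{n}-T_{m}))$, the sequence $(T_{n})$ is Cauchy in the Banach space $BLip_{0}(X_{1},X_{2};E)$, with some limit $T$; continuity of $T\longmapsto J_{E}\circ T$ forces $S=J_{E}\circ T$, so $T\in \mathcal{I}_{BLip}^{inj}(X_{1},X_{2};E)$ and $\Vert T_{n}-T\Vert _{\mathcal{I}_{BLip}^{inj}}=\Vert J_{E}\circ T_{n}-S\Vert _{\mathcal{I}_{BLip}}\to 0$, completing the argument.
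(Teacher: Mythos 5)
Your proposal is correct and follows essentially the same architecture as the paper's proof: pull everything back along the isometric linear injection $T\longmapsto J_{E}\circ T$, reduce the ideal property to finding an operator intertwining $J_{E}$ and $J_{F}\circ u$, and run the standard Cauchy-sequence argument for completeness. The only difference is cosmetic: where the paper invokes the metric extension property of $\ell_{\infty}(B_{F^{\ast}})$ to obtain the intertwining operator $v$ abstractly, you write it down explicitly as $(\widetilde{u}\xi)_{y^{\ast}}=\Vert u\Vert\,\xi_{u^{\ast}(y^{\ast})/\Vert u\Vert}$, which is a correct and self-contained substitute (one checks $u^{\ast}(y^{\ast})/\Vert u\Vert\in B_{E^{\ast}}$, $\Vert\widetilde{u}\Vert\leq\Vert u\Vert$, and $\widetilde{u}\circ J_{E}=J_{F}\circ u$ directly), and your derivation of (ii') from Proposition~\ref{prop1}(1) plus $BLip(Id_{\mathbb{K}^{2}})=1$ replaces the paper's direct computation to the same effect.
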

\begin{proof}
	For every $S,T\in \mathcal{I}_{BLip}^{inj}(X_{1},X_{2};E)$ and $\alpha \in 
	\mathbb{K}$ we have 
	\begin{equation}
	J_{E}\circ (\alpha S+T)=\alpha J_{E}\circ S+J_{E}\circ T\in \mathcal{I}%
	_{BLip}(X_{1},X_{2};E).  \label{fr}
	\end{equation}%
	This formula ensures that $\mathcal{I}_{BLip}^{inj}(X_{1},X_{2};E)$ is a
	linear subspace of $BLip_{0}(X_{1},X_{2};E)$, which from Proposition \ref
	{prop1} (1), contains the two-Lipschitz operators of the form $f_{1}\cdot
	f_{2}\cdot e$, where $f_{1}\in X_{1}^{\#}$, $f_{2}\in X_{2}^{\#}$ and $e\in E
	$. On the other hand,
	\begin{equation*}
	BLip(T)=BLip(J_{E}\circ T)\leq \left\Vert J_{E}\circ T\right\Vert _{\mathcal{
			I}_{BLip}}=\left\Vert T\right\Vert _{\mathcal{I}_{BLip}^{inj}}.
	\end{equation*}%
	By the norm axioms for $\left\Vert \cdot \right\Vert _{\mathcal{I}_{BLip}}$,
	we easily get the norm axioms for $\left\Vert \cdot \right\Vert _{\mathcal{I}
		_{BLip}^{inj}}$. Also we have $Id_{\mathbb{K}^{2}}\in \mathcal{I}
	_{BLip}\subset \mathcal{I}_{BLip}^{inj}$ and by \cite[Proposition 3.2]{Hamidietall}, we obtain
	directly
	\begin{eqnarray*}
		\left\Vert Id_{\mathbb{K}^{2}}\right\Vert _{\mathcal{I}_{BLip}^{inj}}
		&=&\left\Vert J_{\mathbb{K}}\circ Id_{\mathbb{K}^{2}}\right\Vert _{\mathcal{I
			}_{BLip}} \\
		&=&\left\Vert id_{\mathbb{K}}\cdot id_{\mathbb{K}}\cdot J_{\mathbb{K}
		}(1)\right\Vert _{\mathcal{I}_{BLip}} \\
		&=&\left\Vert J_{\mathbb{K}}(1)\right\Vert _{\infty }Lip(id_{\mathbb{K}
		})Lip(id_{\mathbb{K}})=1.
	\end{eqnarray*}
	Now we prove the ideal property, let $f_{1}\in Lip_{0}(Z,X_{1})$, $f_{2}\in
	Lip_{0}(W,X_{2})$, $T\in \mathcal{I}_{BLip}^{inj}(X_{1},X_{2};E)$ and $u\in 
	\mathcal{L}(E,F)$, by the metric extension property of $\ell _{\infty
	}(B_{F^{\ast }})$ (see \cite[Page 33]{18}) there exists a linear operator $
	v\in \mathcal{L}(\ell _{\infty }(B_{E^{\ast }}),\ell _{\infty }(B_{F^{\ast
	}}))$ such that $v\circ J_{E}=J_{F}\circ u$ and $\left\Vert v\right\Vert
	=\left\Vert J_{F}\circ u\right\Vert =\left\Vert u\right\Vert .$ Therefore,
	\begin{equation*}
	J_{F}\circ u\circ T\circ (f_{1},f_{2})=v\circ J_{E}\circ T\circ
	(f_{1},f_{2})\in \mathcal{I}_{BLip}(Z,W;\ell _{\infty }(B_{F^{\ast }})),
	\end{equation*}%
	which means that $u\circ T\circ (f_{1},f_{2})$ is in $\mathcal{I}
	_{BLip}^{inj}(Z,W;F)$. In addition,%
	\begin{eqnarray*}
		\left\Vert u\circ T\circ (f_{1},f_{2})\right\Vert _{\mathcal{I}
			_{BLip}^{inj}} &=&\left\Vert v\circ J_{E}\circ T\circ
		(f_{1},f_{2})\right\Vert _{\mathcal{I}_{BLip}} \\
		&\leq &\left\Vert v\right\Vert \left\Vert J_{E}\circ T\right\Vert _{\mathcal{
				I}_{BLip}}Lip(f_{1})Lip(f_{2}) \\
		&=&\left\Vert u\right\Vert \left\Vert T\right\Vert _{\mathcal{I}
			_{BLip}^{inj}}Lip(f_{1})Lip(f_{2}).
	\end{eqnarray*}%
	In order to prove that $\left( \mathcal{I}_{BLip}^{inj}(X_{1},X_{2};E),\left
	\Vert \cdot \right\Vert _{\mathcal{I}_{BLip}^{inj}}\right) $ is a Banach
	space, we will consider a Cauchy sequence $\left( T_{n}\right) _{n\geq
		0}\subset \mathcal{I}_{BLip}^{inj}(X_{1},X_{2};E).$ Hence for all $
	\varepsilon >0$, there exists $n_{\varepsilon }\in \mathbb{N}$ such that 
	\begin{equation}
	BLip(T_{n}-T_{k})\leq \left\Vert T_{n}-T_{k}\right\Vert _{\mathcal{I}
		_{BLip}^{inj}}=\left\Vert J_{E}\circ T_{n}-J_{E}\circ T_{k}\right\Vert _{
		\mathcal{I}_{BLip}}<\varepsilon ,  \label{dcauc}
	\end{equation}
	for all $n,k\geq n_{\varepsilon }.$ Which means that $\left( T_{n}\right)
	_{n\in \mathbb{N}}$\ is a Cauchy sequence in the Banach space $%
	BLip_{0}(X_{1},X_{2};E).$ Thus, it exists $T\in BLip_{0}(X_{1},X_{2};E)$
	such that $T_{n}\longrightarrow T$ and then $J_{E}\circ T_{n}\longrightarrow
	J_{E}\circ T$ in $BLip_{0}(X_{1},X_{2};\ell _{\infty }(B_{E^{\ast }}))$.
	Also, the inequality (\ref{dcauc}) asserts that $\left( J_{E}\circ
	T_{n}\right) _{n\geq 0}$\ is a Cauchy sequence in the Banach space $\mathcal{%
		I}_{BLip}(X_{1},X_{2};\ell _{\infty }(B_{E^{\ast }}))$. Then there exists $%
	S\in \mathcal{I}_{BLip}(X_{1},X_{2};\ell _{\infty }(B_{E^{\ast }}))$ such
	that $J_{E}\circ T_{n}\longrightarrow S$. By the uniqueness we get $%
	S=J_{E}\circ T\in \mathcal{I}_{BLip}(X_{1},X_{2};\ell _{\infty }(B_{E^{\ast
	}}))$. Finally, it follows from%
	\begin{equation*}
	\left\Vert T_{n}-T\right\Vert _{\mathcal{I}_{BLip}^{inj}}=\left\Vert
	J_{E}\circ T_{n}-J_{E}\circ T\right\Vert _{\mathcal{I}_{BLip}}=\left\Vert
	J_{E}\circ T_{n}-S\right\Vert _{\mathcal{I}_{BLip}}\longrightarrow 0,
	\end{equation*}%
	that $\left( T_{n}\right) _{n\geq 0}$ converges to $T$ in $\mathcal{I}%
	_{BLip}^{inj}(X_{1},X_{2};E)$.
\end{proof}

The next result describe the injective hull of a Banach two-Lipschitz
operator ideal of composition type.

\begin{proposition}
	Let $\mathcal{I}$ be a Banach linear operator ideal. Then 
	\begin{equation}
	(\mathcal{I\circ }BLip_{0})^{inj}=\mathcal{I}^{inj}\mathcal{\circ }BLip_{0},  \label{exinj}
	\end{equation}
	isometrically. Consequently, if $\mathcal{I}_{BLip}$ is a Banach
	two-Lipschitz operator ideal constructed by the composition method, then so
	is also $\mathcal{I}_{BLip}^{inj}$.
\end{proposition}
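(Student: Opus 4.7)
The plan is to reduce everything to linear operators through the linearization description of $\mathcal{I}\circ BLip_{0}$ given in (\ref{compo}), and then apply the well-known characterization of the linear injective hull, namely that $v\in\mathcal{I}^{inj}(G,E)$ iff $J_{E}\circ v\in\mathcal{I}(G,\ell_{\infty}(B_{E^{\ast}}))$, with $\|v\|_{\mathcal{I}^{inj}}=\|J_{E}\circ v\|_{\mathcal{I}}$.

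The first step is to record the naturality formula
\begin{equation*}
(J_{E}\circ T)_{L}=J_{E}\circ T_{L},
\end{equation*}
as linear operators from \AE$(X_{1})\widehat{\otimes}_{\pi}$\AE$(X_{2})$ to $\ell_{\infty}(B_{E^{\ast}})$. This follows by the uniqueness of the linearization: the right-hand side is continuous and linear, and agrees with $J_{E}\circ T$ on elementary tensors $\delta_{X_{1}}(x_{1})\otimes\delta_{X_{2}}(x_{2})$, so it must coincide with $(J_{E}\circ T)_{L}$. Moreover, since $J_{E}$ is an isometric embedding, one has $\|J_{E}\circ T_{L}\|=\|T_{L}\|$, confirming compatibility with the Banach operator ideal machinery at the linear level.

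The second step is to chain the equivalences: for $T\in BLip_{0}(X_{1},X_{2};E)$,
\begin{align*}
T\in(\mathcal{I}\circ BLip_{0})^{inj} & \Longleftrightarrow J_{E}\circ T\in\mathcal{I}\circ BLip_{0} \\
& \Longleftrightarrow (J_{E}\circ T)_{L}\in\mathcal{I} \\
& \Longleftrightarrow J_{E}\circ T_{L}\in\mathcal{I} \\
& \Longleftrightarrow T_{L}\in\mathcal{I}^{inj} \\
& \Longleftrightarrow T\in\mathcal{I}^{inj}\circ BLip_{0},
\end{align*}
where the first step uses the definition of the two-Lipschitz injective hull, the second and fifth steps use (\ref{compo}) applied to $\mathcal{I}$ and $\mathcal{I}^{inj}$ respectively, the third uses the naturality formula, and the fourth is Stephani's linear characterization.

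For the isometry, the same chain gives
\begin{equation*}
\|T\|_{(\mathcal{I}\circ BLip_{0})^{inj}}=\|J_{E}\circ T\|_{\mathcal{I}\circ BLip_{0}}=\|(J_{E}\circ T)_{L}\|_{\mathcal{I}}=\|J_{E}\circ T_{L}\|_{\mathcal{I}}=\|T_{L}\|_{\mathcal{I}^{inj}}=\|T\|_{\mathcal{I}^{inj}\circ BLip_{0}}.
\end{equation*}
The final consequence is immediate: if $\mathcal{I}_{BLip}=\mathcal{I}\circ BLip_{0}$ for some Banach operator ideal $\mathcal{I}$, then $\mathcal{I}_{BLip}^{inj}=\mathcal{I}^{inj}\circ BLip_{0}$, and since $\mathcal{I}^{inj}$ is itself a Banach operator ideal, $\mathcal{I}_{BLip}^{inj}$ is again of composition type.

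The main obstacle is really just the verification of the naturality identity $(J_{E}\circ T)_{L}=J_{E}\circ T_{L}$; once this is in hand the rest is a routine chain of equivalences, so I would write it out carefully (via the uniqueness clause of the linearization theorem) and let the linear theory do the remaining work. No quantitative computation beyond the identities above is needed, since both the composition norm and the injective-hull norm are defined to transport Banach ideal norms unchanged.
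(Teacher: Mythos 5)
Your proposal is correct and follows essentially the same route as the paper: both rest on the naturality identity $(J_{E}\circ T)_{L}=J_{E}\circ T_{L}$ (obtained from uniqueness of the linearization) and then chain the definition of the two-Lipschitz injective hull, the linearization description of $\mathcal{I}\circ BLip_{0}$ from (\ref{compo}), and Stephani's characterization of the linear injective hull. Your write-up merely makes the isometric part of the statement more explicit than the paper does.
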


\begin{proof}
	Firstly, note that the uniqueness of the linearization maps gives that $%
	(J_{E}\mathcal{\circ }T)_{L}=J_{E}\mathcal{\circ }T_{L}$ for all $T\in
	BLip_{0}(X_{1},X_{2};E).$ We have that $T$ belongs to $(\mathcal{I\circ }%
	BLip_{0})^{inj}(X_{1},X_{2};E)$ if and only if $J_{E}\mathcal{\circ }T$
	belongs to $\mathcal{I\circ }BLip_{0}(X_{1},X_{2};\ell _{\infty }(B_{E^{\ast
	}}))$ and this is equivalent to say that its linearization $(J_{E}\mathcal{%
		\circ }T)_{L}=J_{E}\mathcal{\circ }T_{L}$ belongs to $\mathcal{I}($\AE $%
	\left( X\right) \widehat{\otimes }_{\pi }$\AE $\left( Y\right) ,E)$ and this
	is equivalent to $T_{L}\in \mathcal{I}^{inj}($\AE $\left( X_{1}\right) 
	\widehat{\otimes }_{\pi }$\AE $\left( X_{2}\right) ,E)$, and the result follows.
\end{proof}

\begin{definition}
	The normed two-Lipschitz operator ideal $\left( \mathcal{I}
	_{BLip},\left\Vert \cdot \right\Vert _{\mathcal{I}_{BLip}}\right) $ is said
	to be injective if $\mathcal{I}_{BLip}=\mathcal{I}_{BLip}^{inj}$ (i.e. if $
	T\in BLip_{0}(X_{1},X_{2};E)$ is such that $J_{E}\circ T\in \mathcal{I}
	_{BLip}(X_{1},X_{2};\ell _{\infty }(B_{E^{\ast }}))$, then $T\in \mathcal{I}
	_{BLip}(X_{1},X_{2};E$). Moreover $\left\Vert \cdot \right\Vert _{\mathcal{I}
		_{BLip}}=\left\Vert \cdot \right\Vert _{\mathcal{I}_{BLip}^{inj}}$.
\end{definition}
	
\begin{remark}
	\label{injec} By (1) in Proposition \ref{prop1}, $\left( \mathcal{I}
	_{BLip},\left\Vert \cdot \right\Vert _{\mathcal{I}_{BLip}}\right) $ is
	injective if and only if $T\in \mathcal{I}_{BLip}(X_{1},X_{2};E)$ whenever $
	T\in BLip_{0}(X_{1},X_{2};E)$ such that $J_{E}\circ T\in \mathcal{I}
	_{BLip}(X_{1},X_{2};\ell _{\infty }(B_{E^{\ast }}))$ and $\left\Vert
	T\right\Vert _{\mathcal{I}_{BLip}}=\left\Vert J_{E}\circ T\right\Vert _{
		\mathcal{I}_{BLip}}$.
\end{remark}

We aim to characterize injective normed two-Lipschitz operator ideals by
metric injections.

\begin{theorem}
	The normed two-Lipschitz operator ideal $\left( \mathcal{I}%
	_{BLip},\left\Vert \cdot \right\Vert _{\mathcal{I}_{BLip}}\right) $ is
	injective if and only if $T\in \mathcal{I}_{BLip}(X_{1},X_{2};E)$ whenever $
	T\in BLip_{0}(X_{1},X_{2};E)$ and $u\in \mathcal{L}(E,G)$ is a metric
	injection such that $u\circ T\in \mathcal{I}_{BLip}(X_{1},X_{2};E)$ and $
	\left\Vert T\right\Vert _{\mathcal{I}_{BLip}}=\left\Vert u\circ T\right\Vert
	_{\mathcal{I}_{BLip}}$.
\end{theorem}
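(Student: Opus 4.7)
The plan is to prove both implications using the fact that $\ell_\infty(B_{E^*})$ is a $1$-injective Banach space, combined with the injectivity characterization from Remark \ref{injec}.

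For the easy direction ($\Leftarrow$), I would simply observe that the canonical map $J_E\colon E\longrightarrow \ell_\infty(B_{E^*})$ is itself a metric injection (this is the whole point of the embedding: $\|J_E(y)\|_\infty = \sup_{y^*\in B_{E^*}}|\langle y,y^*\rangle|=\|y\|$). Therefore the hypothesis, applied with $G=\ell_\infty(B_{E^*})$ and $u=J_E$, states exactly that $J_E\circ T\in \mathcal{I}_{BLip}$ implies $T\in\mathcal{I}_{BLip}$ with $\|T\|_{\mathcal{I}_{BLip}}=\|J_E\circ T\|_{\mathcal{I}_{BLip}}$, which by Remark \ref{injec} is precisely the definition of $\mathcal{I}_{BLip}$ being injective.

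For the harder direction ($\Rightarrow$), assume $\mathcal{I}_{BLip}$ is injective and take $T\in BLip_0(X_1,X_2;E)$ together with a metric injection $u\in\mathcal{L}(E,G)$ such that $u\circ T\in\mathcal{I}_{BLip}(X_1,X_2;G)$. The key is to produce a bounded linear operator that sends $u\circ T$ back to $J_E\circ T$. Since $u$ is an isometric embedding, $E$ identifies with a subspace of $G$, and since $\ell_\infty(B_{E^*})$ has the metric extension property (cf.\ \cite[Page 33]{18}), the operator $J_E\colon E\longrightarrow \ell_\infty(B_{E^*})$ extends to some $\widetilde{J_E}\in\mathcal{L}(G,\ell_\infty(B_{E^*}))$ with $\widetilde{J_E}\circ u=J_E$ and $\|\widetilde{J_E}\|=\|J_E\|=1$. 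Applying the ideal property (iii') to the factorization $J_E\circ T=\widetilde{J_E}\circ(u\circ T)$ yields $J_E\circ T\in\mathcal{I}_{BLip}(X_1,X_2;\ell_\infty(B_{E^*}))$ with
\begin{equation*}
\|J_E\circ T\|_{\mathcal{I}_{BLip}}\leq \|\widetilde{J_E}\|\,\|u\circ T\|_{\mathcal{I}_{BLip}}=\|u\circ T\|_{\mathcal{I}_{BLip}}.
\end{equation*}
By injectivity of $\mathcal{I}_{BLip}$ (via Remark \ref{injec}), this gives $T\in\mathcal{I}_{BLip}(X_1,X_2;E)$ with $\|T\|_{\mathcal{I}_{BLip}}=\|J_E\circ T\|_{\mathcal{I}_{BLip}}\leq \|u\circ T\|_{\mathcal{I}_{BLip}}$. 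The reverse inequality follows immediately from the ideal property applied to $u\circ T$, since $\|u\|=1$:
\begin{equation*}
\|u\circ T\|_{\mathcal{I}_{BLip}}\leq \|u\|\,\|T\|_{\mathcal{I}_{BLip}}=\|T\|_{\mathcal{I}_{BLip}}.
\end{equation*}

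The main obstacle is conceptual rather than technical: recognizing that one should lift $J_E$ rather than try to ``invert'' $u$ directly. Once one remembers that $\ell_\infty(B_{E^*})$ is $1$-injective, the Hahn--Banach-type extension of $J_E$ through the isometric embedding $u$ does all the work, and the norm equalities drop out from the two halves of the ideal inequality. It is worth noting that the statement as written contains a minor typo (the target of $u\circ T$ should be $G$, not $E$); the proof above treats it accordingly.
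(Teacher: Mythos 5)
Your proof is correct and follows essentially the same route as the paper: the easy direction takes $u=J_E$, and the converse uses the metric extension property of $\ell_\infty(B_{E^*})$ to produce a norm-one operator $w\in\mathcal{L}(G,\ell_\infty(B_{E^*}))$ with $w\circ u=J_E$, then applies the ideal property and Remark \ref{injec}. The only (cosmetic) difference is that you extend $J_E$ directly along $u$, whereas the paper first composes with $J_G$ and extends from $\ell_\infty(B_{G^*})$, writing the same operator as $v\circ J_G$; your observation about the typo in the codomain of $u\circ T$ is also correct.
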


\begin{proof}
	The \textquotedblleft if" part is follows immediately from the previous
	remark and that $J_{E}:E\longrightarrow \ell _{\infty }(B_{E^{\ast }})$ is a
	metric injection. To prove the \textquotedblleft only if" part, suppose that 
	$\mathcal{I}_{BLip}=\mathcal{I}_{BLip}^{inj}$. Take $T\in
	BLip_{0}(X_{1},X_{2};E)$ and a metric injection $u:E\longrightarrow G$ such
	that such that $u\circ T\in \mathcal{I}_{BLip}(X_{1},X_{2};E)$ and $%
	\left\Vert T\right\Vert _{\mathcal{I}_{BLip}}=\left\Vert u\circ T\right\Vert
	_{\mathcal{I}_{BLip}}$. Since $J_{G}\circ u:E\longrightarrow \ell _{\infty
	}(B_{G^{\ast }})$ is a metric injection, by the metric extension property of 
	$\ell _{\infty }(B_{E^{\ast }})$ there exists $v\in \mathcal{L}(\ell
	_{\infty }(B_{G^{\ast }}),\ell _{\infty }(B_{E^{\ast }}))$ such that $v\circ
	J_{G}\circ u=J_{E}$ and $\left\Vert v\right\Vert =\left\Vert
	J_{E}\right\Vert =1.$ Therefore,%
	\begin{equation*}
	J_{E}\circ T=v\circ J_{G}\circ u\circ T\in \mathcal{I}_{BLip}(X_{1},X_{2};%
	\ell _{\infty }(B_{E^{\ast }})).
	\end{equation*}%
	By assumption, we get $T\in \mathcal{I}_{BLip}(X_{1},X_{2};E)$ and we have 
	\begin{equation*}
	\left\Vert T\right\Vert _{\mathcal{I}_{BLip}}=\left\Vert J_{E}\circ
	T\right\Vert _{\mathcal{I}_{BLip}}=\left\Vert v\circ J_{G}\circ u\circ
	T\right\Vert _{\mathcal{I}_{BLip}}\leq \left\Vert u\circ T\right\Vert _{%
		\mathcal{I}_{BLip}}.
	\end{equation*}%
	The reverse is immediate by the ideal property.
\end{proof}

\begin{example}
By used the previous theorem, we show that the ideal of compact
two-Lipschitz operators is injective. Let $X_{1}$ and $X_{2}$ be pointed
metric spaces and let $E$ be a Banach space. For every $T\in BLip_{0}
\mathcal{(}X_{1},X_{2};E)$ consider $Im_{BLip}(T)$ the bounded subset of $E$
consists all vectors of the form
\begin{equation*}
\frac{T\left( x_{1},x_{2}\right) -T\left( x_{1},x_{2}^{\prime }\right)
	-T\left( x_{1}^{\prime },x_{2}\right) +T\left( x_{1}^{\prime },x_{2}^{\prime
	}\right) }{d(x_{1},x_{1}^{\prime })d(x_{2},x_{2}^{\prime })},
\end{equation*}
where $x_{1},x_{1}^{\prime }\in X_{1},x_{2},x_{2}^{\prime }\in X_{2}$ with $
x_{1}\neq x_{1}^{\prime }$ and $x_{2}\neq x_{2}^{\prime }.$ The mapping $T$
is called compact, in symbols $T\in BLip_{0\mathcal{K}}(X,Y;E)$, if $
Im_{BLip}(T)$ is relatively compact in $E.$ The class $BLip_{0\mathcal{K}}$
is a closed two-Lipschitz operator ideal (see \cite[Section 4.1]{Hamidietall}).
It is clear that if $u:E\longrightarrow G$ is a metric injection, then $
Im_{BLip}(u\circ T)=u\left( Im_{BLip}(T)\right)$. This implies that $BLip_{0
	\mathcal{K}}^{inj}\subset BLip_{0\mathcal{K}}$, thus $BLip_{0\mathcal{K}}$
is injective.	
\end{example}	

\section{Technique to generate two-Lipschitz operators ideals from operator ideals}

Now we introduced a technique to construct a (Banach) two-Lipschitz operator
ideal from a (Banach) linear operator ideal using the the notion of
transpose of a two-Lipschitz operator, was introduced by the second author
in \cite{Dahia} that we now describe briefly. The transpose of a
two-Lipschitz operator $T\in BLip_{0}(X_{1},X_{2};E)$ is the linear operator 
$T^{t}:E^{\ast }\longrightarrow BLip_{0}(X_{1},X_{2};\mathbb{K})$ that maps
each $e^{\ast }\in E^{\ast }$ to $e^{\ast }\circ T$ that is $T^{t}(e^{\ast
})(x,y)=e^{\ast }(T(x,y)),$ for all $(x,y)\in X_{1},X_{2}.$ In addition, we
have $\left\Vert T^{t}\right\Vert =BLip(T)$ (see \cite[Theorem 2.6]{Dahia}).

Recall that the dual of an operator ideal $\mathcal{I}$ is the operator
ideal $\mathcal{I}^{dual}$ such that $u\in \mathcal{I}^{dual}(E,F)$ if and
only if $u^{\ast }\in \mathcal{I}(F^{\ast },E^{\ast })$. Where $u^{\ast
}:F^{\ast }\longrightarrow E^{\ast }$ is the adjoint of $u$ (see \cite[Page
67]{18}). Also the bilinear dual of an operator ideal $\mathcal{I}$ is the
bilinear ideal $\mathcal{I}^{\mathcal{B}\text{-}dual}$ such that for all
Banach spaces $E,F,G$ we have that the bilinear operator $T$ belongs to $%
\mathcal{I}^{\mathcal{B}\text{-}dual}(E,F;G)$ if and only if its adjoint $%
T^{\ast }$ belongs to $\mathcal{I}(G^{\ast },\mathcal{L}(E,F;\mathbb{K})$
(see \cite[Definition 1.2]{Botelho14} for the polynomial case). Note that if 
$\mathcal{I}$ is a normed operator ideal we define $\left\Vert u\right\Vert
_{\mathcal{I}^{dual}}:=\left\Vert u^{\ast }\right\Vert _{\mathcal{I}}$ and $%
\left\Vert T\right\Vert _{\mathcal{I}^{\mathcal{B}\text{-}dual}}:=\left\Vert
T^{\ast }\right\Vert _{\mathcal{I}}$.

The proof of the following lemma is an adaptation of \cite[Theorem 2.2]%
{Botelho14}.

\begin{lemma}
	Let $\mathcal{I}$ be a normed operator ideal. Then $\mathcal{I}^{\mathcal{B}%
		\text{-}dual}=\mathcal{I}^{dual}\circ \mathcal{B}$ and $\left\Vert \cdot
	\right\Vert _{\mathcal{I}^{\mathcal{B}\text{-}dual}}=\left\Vert \cdot
	\right\Vert _{\mathcal{I}^{dual}\circ \mathcal{B}}$. \newline
	Where $\mathcal{B}$ is the ideal of bilinear operators.
\end{lemma}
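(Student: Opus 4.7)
The plan is to pass to the linearization through the projective tensor product and exploit the canonical isometric isomorphism $(E\widehat{\otimes}_\pi F)^{\ast}\cong\mathcal{L}(E,F;\mathbb{K})$ to translate the dual-ideal condition on $T^{\ast}$ into the composition-ideal condition on $T_L$.

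First, I would recall the bilinear analogue of the composition formula (\ref{compo}): every bilinear $T\in\mathcal{B}(E,F;G)$ has a unique linearization $T_L:E\widehat{\otimes}_\pi F\to G$ with $\|T_L\|=\|T\|$, and $T\in\mathcal{I}^{dual}\circ\mathcal{B}(E,F;G)$ if and only if $T_L\in\mathcal{I}^{dual}(E\widehat{\otimes}_\pi F,G)$, with
\[
\|T\|_{\mathcal{I}^{dual}\circ\mathcal{B}}=\|T_L\|_{\mathcal{I}^{dual}}.
\]
By the very definition of the dual ideal, this last quantity equals $\|(T_L)^{\ast}\|_{\mathcal{I}}$, and $T_L\in\mathcal{I}^{dual}$ is equivalent to $(T_L)^{\ast}\in\mathcal{I}(G^{\ast},(E\widehat{\otimes}_\pi F)^{\ast})$.

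The second step is the core identification. Let $\Phi:(E\widehat{\otimes}_\pi F)^{\ast}\to\mathcal{L}(E,F;\mathbb{K})$ denote the canonical isometric isomorphism sending $\varphi$ to the bilinear form $(x,y)\mapsto\varphi(x\otimes y)$. For any $g^{\ast}\in G^{\ast}$ and $(x,y)\in E\times F$,
\[
\Phi\bigl((T_L)^{\ast}(g^{\ast})\bigr)(x,y)=g^{\ast}\bigl(T_L(x\otimes y)\bigr)=g^{\ast}(T(x,y))=T^{\ast}(g^{\ast})(x,y),
\]
so $\Phi\circ (T_L)^{\ast}=T^{\ast}$. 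Since $\Phi$ is an isometric isomorphism, the ideal property of $\mathcal{I}$ yields $T^{\ast}\in\mathcal{I}$ if and only if $(T_L)^{\ast}\in\mathcal{I}$, with equal $\mathcal{I}$-norms.

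Chaining the equivalences then gives $T\in\mathcal{I}^{\mathcal{B}\text{-}dual}$ iff $T^{\ast}\in\mathcal{I}$ iff $(T_L)^{\ast}\in\mathcal{I}$ iff $T_L\in\mathcal{I}^{dual}$ iff $T\in\mathcal{I}^{dual}\circ\mathcal{B}$, and similarly
\[
\|T\|_{\mathcal{I}^{\mathcal{B}\text{-}dual}}=\|T^{\ast}\|_{\mathcal{I}}=\|(T_L)^{\ast}\|_{\mathcal{I}}=\|T_L\|_{\mathcal{I}^{dual}}=\|T\|_{\mathcal{I}^{dual}\circ\mathcal{B}}.
\]
The only genuinely delicate point is the second step: verifying that the universal property of $\widehat{\otimes}_\pi$ indeed makes $\Phi\circ(T_L)^{\ast}$ agree with $T^{\ast}$ not merely on elementary tensors but as elements of the ideal component, so that equality of $\mathcal{I}$-norms is preserved; everything else is a routine unpacking of definitions.
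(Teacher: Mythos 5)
Your argument is correct and is exactly the adaptation of \cite[Theorem 2.2]{Botelho14} that the paper invokes without writing out: pass to the linearization $T_{L}$, use the composition-ideal characterization $T\in \mathcal{I}^{dual}\circ \mathcal{B}\Leftrightarrow T_{L}\in \mathcal{I}^{dual}(E\widehat{\otimes }_{\pi }F,G)$, and identify $T^{\ast }$ with $(T_{L})^{\ast }$ through the canonical isometric isomorphism $(E\widehat{\otimes }_{\pi }F)^{\ast }\cong \mathcal{L}(E,F;\mathbb{K})$, so that the ideal property transfers membership and preserves the $\mathcal{I}$-norm in both directions. The point you flag as delicate is not actually an issue: $\Phi \circ (T_{L})^{\ast }(g^{\ast })$ and $T^{\ast }(g^{\ast })$ are both bilinear forms on $E\times F$ and your computation shows they agree pointwise on all of $E\times F$ (no density or elementary-tensor argument is needed), after which composing with the norm-one isomorphisms $\Phi$ and $\Phi ^{-1}$ gives the two inequalities yielding equality of norms.
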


Now we arrive to define and characterize the two-Lipschitz dual of an
operator ideal.

\begin{definition}
	The two-Lipschitz dual of a given operator ideal $\mathcal{I}$ is defined by 
	\begin{eqnarray*}
		&&\mathcal{I}^{BLip\text{-}dual}(X_{1},X_{2};E) \\
		&=&\left\{ T\in BLip_{0}(X_{1},X_{2};E):T^{t}\in \mathcal{I}(E^{\ast
		},BLip_{0}(X_{1},X_{2};\mathbb{K})\right\} .
	\end{eqnarray*}
\end{definition}

If $(\mathcal{I},\left\Vert \cdot \right\Vert _{\mathcal{I}})$ is a normed
operator ideal, define $\left\Vert T\right\Vert _{\mathcal{I}^{BLip\text{-}%
		dual}}:=\left\Vert T^{t}\right\Vert _{\mathcal{I}}$ for every $T\in \mathcal{%
	I}^{BLip\text{-}dual}(X_{1},X_{2};E).$

\begin{theorem} \label{dualth}
	We have $\mathcal{I}^{BLip\text{-}dual}=\mathcal{I}^{dual}\circ BLip_{0}.
	$ Moreover, if $(\mathcal{I},\left\Vert \cdot \right\Vert _{\mathcal{I}})$
	is a normed operator ideal, then $\left\Vert \cdot \right\Vert _{\mathcal{I}%
		^{BLip\text{-}dual}}=\left\Vert \cdot \right\Vert _{\mathcal{I}
		^{dual}\circ BLip_{0}}.$
\end{theorem}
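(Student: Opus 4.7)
The plan is to reduce everything to a statement about linearizations and their adjoints, exploiting the canonical identification
\[
BLip_{0}(X_{1},X_{2};\mathbb{K})\;\cong\;\bigl(\text{\AE}(X_{1})\widehat{\otimes}_{\pi}\text{\AE}(X_{2})\bigr)^{\ast}
\]
so that the transpose $T^{t}$ of a two-Lipschitz operator $T$ coincides, up to an isometric isomorphism, with the Banach-space adjoint $(T_{L})^{\ast}$ of its linearization $T_{L}\colon\text{\AE}(X_{1})\widehat{\otimes}_{\pi}\text{\AE}(X_{2})\longrightarrow E$. Once this identification is in place, the membership in $\mathcal{I}^{BLip\text{-}dual}$ translates into $(T_{L})^{\ast}\in\mathcal{I}$, which by the very definition of the dual operator ideal means $T_{L}\in\mathcal{I}^{dual}$, and the composition characterization recalled in \eqref{compo} does the rest.

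Concretely, I would first fix $T\in BLip_{0}(X_{1},X_{2};E)$ and write down explicitly, for each $e^{\ast}\in E^{\ast}$, the equality
\[
T^{t}(e^{\ast})(x_{1},x_{2})=e^{\ast}(T(x_{1},x_{2}))
\qquad\text{and}\qquad
(T_{L})^{\ast}(e^{\ast})=e^{\ast}\circ T_{L},
\]
and observe (via the universal property of $\widehat{\otimes}_{\pi}$ together with the linearization identity $T=T_{L}\circ\delta_{X_{1}\times X_{2}}$) that the bilinear form corresponding to $T^{t}(e^{\ast})$ on $\text{\AE}(X_{1})\times\text{\AE}(X_{2})$ is exactly the one associated with $(T_{L})^{\ast}(e^{\ast})$. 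This yields a commutative diagram identifying $T^{t}$ and $(T_{L})^{\ast}$ as the same operator, and from $\|T^{t}\|=BLip(T)=\|T_{L}\|$ one sees that this identification is isometric.

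Next I would chain equivalences: $T\in\mathcal{I}^{BLip\text{-}dual}(X_{1},X_{2};E)$ iff $T^{t}\in\mathcal{I}(E^{\ast},BLip_{0}(X_{1},X_{2};\mathbb{K}))$ iff $(T_{L})^{\ast}\in\mathcal{I}(E^{\ast},(\text{\AE}(X_{1})\widehat{\otimes}_{\pi}\text{\AE}(X_{2}))^{\ast})$ iff $T_{L}\in\mathcal{I}^{dual}(\text{\AE}(X_{1})\widehat{\otimes}_{\pi}\text{\AE}(X_{2}),E)$ iff $T\in\mathcal{I}^{dual}\circ BLip_{0}(X_{1},X_{2};E)$. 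The first step is a definition, the second is the isometric identification established above, the third is the defining property of the dual operator ideal, and the last is the characterization of composition ideals given in \eqref{compo}.

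For the norm equality, assuming $\mathcal{I}$ is normed, I would trace the equalities through the same chain:
\[
\|T\|_{\mathcal{I}^{BLip\text{-}dual}}
=\|T^{t}\|_{\mathcal{I}}
=\|(T_{L})^{\ast}\|_{\mathcal{I}}
=\|T_{L}\|_{\mathcal{I}^{dual}}
=\|T\|_{\mathcal{I}^{dual}\circ BLip_{0}},
\]
where the middle equality uses the (isometric) identification of $T^{t}$ with $(T_{L})^{\ast}$ and the last equality is the formula in \eqref{compo}. The main delicate point, and really the only one requiring care, is the isometric identification $BLip_{0}(X_{1},X_{2};\mathbb{K})\cong(\text{\AE}(X_{1})\widehat{\otimes}_{\pi}\text{\AE}(X_{2}))^{\ast}$ together with verifying that under this identification $T^{t}$ genuinely coincides with $(T_{L})^{\ast}$ as Banach-space operators; the rest is a matter of unwinding definitions.
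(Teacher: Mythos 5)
Your proposal is correct, and it reaches the same pivot as the paper (membership of $T_{L}$ in $\mathcal{I}^{dual}(\text{\AE}(X_{1})\widehat{\otimes}_{\pi}\text{\AE}(X_{2}),E)$), but by a genuinely shorter route. The paper first passes from $T^{t}$ to the adjoint of the bilinearization, writing $(T_{B})^{\ast}=R\circ T^{t}$ with $R$ the isometry onto $\mathcal{B}(\text{\AE}(X_{1}),\text{\AE}(X_{2});\mathbb{K})$, then invokes the auxiliary lemma $\mathcal{I}^{\mathcal{B}\text{-}dual}=\mathcal{I}^{dual}\circ\mathcal{B}$ and a result of Botelho--Pellegrino--Rueda to pass from $T_{B}$ to $T_{L}$; it also treats the converse separately by an explicit $\varepsilon$-factorization $T=u\circ S$ with $T^{t}=S^{t}\circ u^{\ast}$, obtaining the norm equality as two opposite inequalities. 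You instead push the identification one step further, to $BLip_{0}(X_{1},X_{2};\mathbb{K})\cong(\text{\AE}(X_{1})\widehat{\otimes}_{\pi}\text{\AE}(X_{2}))^{\ast}$, under which $T^{t}$ becomes exactly $(T_{L})^{\ast}$ (by uniqueness of linearization, $(e^{\ast}\circ T)_{L}=e^{\ast}\circ T_{L}$), and then the whole theorem, norms included, is a chain of equivalences and equalities using only the definition of $\mathcal{I}^{dual}$ and the formula \eqref{compo}. This buys you a one-pass proof of both inclusions and of the isometry, at the price of having to justify carefully (i) the isometric identification itself, which is the composite of the paper's $R$ with the classical duality $(F\widehat{\otimes}_{\pi}G)^{\ast}\cong\mathcal{B}(F,G;\mathbb{K})$, and (ii) the standard fact that composing with an isometric isomorphism and its inverse leaves the ideal norm unchanged, which you use implicitly in the equality $\Vert T^{t}\Vert_{\mathcal{I}}=\Vert(T_{L})^{\ast}\Vert_{\mathcal{I}}$. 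One cosmetic remark: the factorization should be written $T=T_{L}\circ\sigma_{2}\circ(\delta_{X_{1}},\delta_{X_{2}})$ rather than $T=T_{L}\circ\delta_{X_{1}\times X_{2}}$, since the relevant diagonal map goes through the canonical bilinear map into the projective tensor product.
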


\begin{proof}
	Let $X_{1},X_{2}$ be pointed metric spaces and $E$ be Banach space. Suppose
	that $T\in \mathcal{I}^{BLip_{0}\text{-}dual}(X_{1},X_{2};E)$, that is $
	T^{t}\in \mathcal{I}(E^{\ast },BLip_{0}(X_{1},X_{2};\mathbb{K})$. Using \cite
	[Lemma 2.9]{Dahia} and the ideal property we get 
	\[
	(T_{B})^{\ast }=R\circ T^{t}\in \mathcal{I}(E^{\ast },\mathcal{B}(\text{\AE }
	\left( X_{1}\right) ,\text{\AE }\left( X_{2}\right) ;\mathbb{K}),
	\]%
	where $R:BLip_{0}(X_{1},X_{2};\mathbb{K})\longrightarrow \mathcal{B}($\AE $
	\left( X_{1}\right) ,$\AE $\left( X_{2}\right) ;\mathbb{K})$ is the
	isomorphic isometry given by $R(\phi )=\phi _{B}$. Then
	\[
	T_{B}\in \mathcal{I}^{\mathcal{B}\text{-}dual}(\text{\AE }\left(
	X_{1}\right) ,\text{\AE }\left( X_{2}\right) ;E).
	\]
	By the previous lemma, it follows that $T_{B}\in \mathcal{I}^{dual}\circ 
	\mathcal{B}($\AE $\left( X_{1}\right) ,$\AE $\left( X_{2}\right) ;E).$ By 
	\cite[Proposition 3.2]{BPR07} we have that $T_{L}\in \mathcal{I}^{dual}($\AE 
	$\left( X_{1}\right) \widehat{\otimes }_{\pi }$\AE $\left( X_{2}\right) ,E)$
	. Since $T=T_{L}\circ \sigma _{2}\circ (\delta _{X_{1}},\delta _{X_{2}})$,
	where $\sigma _{2}$ is the canonical bilinear operator defined from \AE $
	\left( X_{1}\right) \times $\AE $\left( X_{2}\right) $ to \AE $\left(
	X_{1}\right) \widehat{\otimes }_{\pi }$\AE $\left( X_{2}\right) $ by $\sigma
	_{2}(m_{x0},m_{y0})=m_{x0}\otimes m_{y0},$ we conclude $T\in \mathcal{I}
	^{dual}\circ BLip_{0}(X_{1},X_{2};E)$. Besides, it follows from (\ref{compo}
	) and \cite[Proposition 3.7]{BPR07} that
	\begin{eqnarray*}
		\left\Vert T\right\Vert _{\mathcal{I}^{dual}\circ BLip_{0}} &=&\left\Vert
		T_{L}\right\Vert _{\mathcal{I}^{dual}}=\left\Vert T_{B}\right\Vert _{
			\mathcal{I}^{dual}\circ \mathcal{B}} \\
		&=&\left\Vert T_{B}\right\Vert _{\mathcal{I}^{\mathcal{B}\text{-}
				dual}}=\left\Vert R\circ T^{t}\right\Vert _{\mathcal{I}} \\
		&\leq &\left\Vert R\right\Vert \left\Vert T^{t}\right\Vert _{\mathcal{I}
		}=\left\Vert T\right\Vert _{\mathcal{I}^{BLip\text{-}dual}}.
	\end{eqnarray*}
	Conversely, assume that $T\in \mathcal{I}^{dual}\circ BLip_{0}(X_{1},X_{2};E)
	$. For all $\varepsilon >0$ choose Banach space $F,$ a linear operator $
	u:F\longrightarrow E$ and $S\in BLip_{0}(X_{1},X_{2};F)$ such that $T=u\circ
	S$ and $u^{\ast }\in \mathcal{I(}E^{\ast },F^{\ast })$ such that
	\[
	\left\Vert u\right\Vert _{\mathcal{I}^{dual}}BLip(S)\leq \varepsilon
	+\left\Vert T\right\Vert _{\mathcal{I}^{dual}\circ BLip_{0}}.
	\]
	By the ideal property we have $T^{t}=S^{t}\circ u^{\ast }\in \mathcal{I}
	(E^{\ast },BLip_{0}(X_{1},X_{2};\mathbb{K}),$ that is $T\in \mathcal{I}
	^{BLip_{0}\text{-}dual}(X_{1},X_{2};E)$. Moreover,
	\begin{eqnarray*}
		\left\Vert T\right\Vert _{\mathcal{I}^{BLip_{0}\text{-}dual}} &=&\left\Vert
		S^{t}\circ u^{\ast }\right\Vert _{\mathcal{I}}\leq \left\Vert
		S^{t}\right\Vert \left\Vert u^{\ast }\right\Vert _{\mathcal{I}} \\
		&=&BLip(S)\left\Vert u\right\Vert _{\mathcal{I}^{dual}} \\
		&\leq &\varepsilon +\left\Vert T\right\Vert _{\mathcal{I}^{dual}\circ
			BLip_{0}},
	\end{eqnarray*}
	which completes the proof.
\end{proof}
By employing composition method, \cite[Proposition 3.6]{Hamidietall}, and
applying the formula presented in the earlier theorem, we obtain the
following result.

\begin{corollary}
	$\mathcal{I}^{BLip\text{-}dual}$ is a Banach two-Lipschitz operator
	ideal whenever $\mathcal{I}$ a Banach operator ideal.
\end{corollary}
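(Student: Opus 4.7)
The plan is to invoke the preceding Theorem \ref{dualth} to reduce the claim to a statement about a composition ideal, and then cite the known machinery from \cite{Hamidietall}. Specifically, Theorem \ref{dualth} asserts the isometric identification
\begin{equation*}
\left(\mathcal{I}^{BLip\text{-}dual},\left\Vert\cdot\right\Vert_{\mathcal{I}^{BLip\text{-}dual}}\right)=\left(\mathcal{I}^{dual}\circ BLip_{0},\left\Vert\cdot\right\Vert_{\mathcal{I}^{dual}\circ BLip_{0}}\right),
\end{equation*}
so it suffices to check that the right-hand side is a Banach two-Lipschitz operator ideal.

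First I would recall the standard fact from the linear theory that whenever $(\mathcal{I},\|\cdot\|_{\mathcal{I}})$ is a Banach operator ideal, its dual $(\mathcal{I}^{dual},\|\cdot\|_{\mathcal{I}^{dual}})$ is again a Banach operator ideal (this is classical, see e.g.\ \cite[Page 67]{18}, since duality preserves the ideal axioms and the norm $\|u\|_{\mathcal{I}^{dual}}=\|u^{\ast}\|_{\mathcal{I}}$ inherits completeness from $\mathcal{I}$). Therefore $\mathcal{I}^{dual}$ qualifies as input for the composition method.

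Next I would apply \cite[Proposition 3.6]{Hamidietall}, which states precisely that the composition construction $\mathcal{J}\circ BLip_{0}$ produces a Banach two-Lipschitz operator ideal whenever $\mathcal{J}$ is a Banach linear operator ideal. Taking $\mathcal{J}=\mathcal{I}^{dual}$ yields that $(\mathcal{I}^{dual}\circ BLip_{0},\|\cdot\|_{\mathcal{I}^{dual}\circ BLip_{0}})$ is a Banach two-Lipschitz operator ideal.

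Finally, combining this with the isometric identity from Theorem \ref{dualth} transports the Banach-ideal structure back to $\mathcal{I}^{BLip\text{-}dual}$, concluding the proof. There is no real obstacle here; the work has already been done in Theorem \ref{dualth} (the factorization $T=T_{L}\circ\sigma_{2}\circ(\delta_{X_{1}},\delta_{X_{2}})$ used there is what makes the identification with a composition ideal possible), and the corollary is essentially a packaging of that identification with the black-box result of \cite[Proposition 3.6]{Hamidietall}.
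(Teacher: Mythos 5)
Your proposal is correct and follows exactly the paper's route: the paper likewise obtains the corollary by combining the isometric identity $\mathcal{I}^{BLip\text{-}dual}=\mathcal{I}^{dual}\circ BLip_{0}$ from Theorem \ref{dualth} with \cite[Proposition 3.6]{Hamidietall} on composition ideals. Your explicit remark that $\mathcal{I}^{dual}$ is itself a Banach operator ideal is a step the paper leaves implicit, but it is the standard classical fact and your argument is complete.
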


Schauder type theorem on the compactness of the transpose of two-Lipschitz
operator (\cite[Theorem 2.10]{Dahia}) can be demonstrate directly based on
our previous results. Let $\mathcal{K}$ be the ideal of linear compact
operators. It is well-known $\mathcal{K}^{dual}=\mathcal{K}$ (see \cite[Page
68]{18}).

\begin{corollary}
	A two-Lipschitz operator $T:X_{1}\times X_{2}\longrightarrow E$ is compact
	if and only if its transpose $T^{t}$ is a compact linear operator.
\end{corollary}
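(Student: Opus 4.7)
The plan is to derive this corollary as a direct consequence of Theorem \ref{dualth} applied to $\mathcal{I}=\mathcal{K}$, the ideal of linear compact operators, exactly as the authors indicate in their preamble. The key observation is that by definition $T\in \mathcal{K}^{BLip\text{-}dual}(X_1,X_2;E)$ if and only if $T^t\in\mathcal{K}(E^{\ast},BLip_0(X_1,X_2;\mathbb{K}))$, so the ``only if" reformulation of the corollary is literally the assertion that
\[
BLip_{0\mathcal{K}}=\mathcal{K}^{BLip\text{-}dual}.
\]

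First I would invoke $\mathcal{K}^{dual}=\mathcal{K}$ together with Theorem \ref{dualth} to obtain
\[
\mathcal{K}^{BLip\text{-}dual}=\mathcal{K}^{dual}\circ BLip_0=\mathcal{K}\circ BLip_0.
\]
Then it remains to identify $\mathcal{K}\circ BLip_0$ with the class $BLip_{0\mathcal{K}}$ of compact two-Lipschitz operators introduced in the example at the end of Section~3. The inclusion $\mathcal{K}\circ BLip_0\subseteq BLip_{0\mathcal{K}}$ is routine: if $T=u\circ S$ with $S\in BLip_0(X_1,X_2;F)$ and $u\in\mathcal{K}(F,E)$, then every element of $Im_{BLip}(T)$ is the image under $u$ of an element of $Im_{BLip}(S)$, which is bounded; hence $Im_{BLip}(T)\subseteq u(BLip(S)\cdot B_F)$ is relatively compact. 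For the reverse inclusion I would pass through the linearization: $T\in BLip_{0\mathcal{K}}(X_1,X_2;E)$ is equivalent to $T_L\in\mathcal{K}($\AE$(X_1)\widehat{\otimes}_\pi$\AE$(X_2),E)$ (this equivalence is the two-Lipschitz analogue used throughout Section~3 and recorded in \cite{Hamidietall}), and then the factorization $T=T_L\circ\sigma_2\circ(\delta_{X_1},\delta_{X_2})$ exhibits $T$ as a composition of a compact linear operator with a two-Lipschitz map.

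Combining these two identifications gives $T\in BLip_{0\mathcal{K}}(X_1,X_2;E)\iff T\in\mathcal{K}^{BLip\text{-}dual}(X_1,X_2;E)\iff T^t\in\mathcal{K}(E^{\ast},BLip_0(X_1,X_2;\mathbb{K}))$, which is the content of the corollary. The only nontrivial point is the identification $\mathcal{K}\circ BLip_0=BLip_{0\mathcal{K}}$, but since both sides are already known closed two-Lipschitz operator ideals and the ``image characterization" of compactness is the one adopted in the Example of Section~3, this step is short and essentially bookkeeping; no genuine difficulty is expected.
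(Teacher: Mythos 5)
Your proposal is correct and follows essentially the same route as the paper: the chain $BLip_{0\mathcal{K}}=\mathcal{K}\circ BLip_{0}=\mathcal{K}^{dual}\circ BLip_{0}=\mathcal{K}^{BLip\text{-}dual}$ via Theorem \ref{dualth} and $\mathcal{K}^{dual}=\mathcal{K}$. The only difference is that you sketch a proof of the identification $\mathcal{K}\circ BLip_{0}=BLip_{0\mathcal{K}}$, which the paper simply cites from \cite[Proposition 3.6]{Hamidietall}; your sketch of that step is sound.
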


\begin{proof}
	Through the utilization of \cite[Proposition 3.6]{Hamidietall} and Theorem 
	\ref{dualth} we get
	\begin{equation}
	BLip_{0\mathcal{K}}=\mathcal{K}\circ BLip_{0}=\mathcal{K}^{dual}\circ
	BLip_{0}=\mathcal{K}^{BLip\text{-}dual}.  \label{lastfr}
	\end{equation}
	The proof conclude.
\end{proof}

\begin{example}
	(a) The formula (\ref{lastfr}) assert that the ideal of two-Lipschtz compact
	operators is generated by the duality method from the operator ideal $
	\mathcal{K}.$ \newline
(b) For $1<p<\infty $, consider $\Pi _{p}$ the Banach ideal of $p$-summing
linear operators was introduced by Pietsch in \cite{17}. Cohen in \cite
{Coh73} presented the Banach ideal $\mathcal{D}_{p}$ of strongly $p$-summing
linear operators and prove that $\mathcal{D}_{q}=(\Pi _{p})^{dual}$, where $
1/p+1/q=1$, (see \cite[Theorem 2.2.2]{Coh73}). Let $\mathcal{D}_{q}^{BL}$ be
the Banach ideal of strongly two-Lipschitz $q$-summing operators generated
by the composition method from $\mathcal{D}_{q}$, that is $\mathcal{D}
_{q}^{BL}=\mathcal{D}_{q}\circ BLip_{0}$ (see \cite[Corollary 4.13]
{Hamidietall}). By Theorem \ref{dualth} we obtain $(\Pi _{p})^{BLip\text{-}dual}\mathcal{=}\mathcal{D}_{q}^{BL}$.
\end{example}	
\section{Techniques to generate two-Lipschitz operators ideals from Lipschitz operator ideals}
There are different ways of constructing an ideal of multilinear mappings
from a given operator ideal (see \cite{99}, \cite{Brau84}). In this section
we fill this gap by developing two techniques to generate two-Lipschitz
operator ideals from given Lipschitz operator ideals.

\subsection{Lipschitzization method}

Let $X_{1}$ and $X_{2}$ be pointed metric spaces and let $E$ be Banach spac.
Each two-Lipschitz operator $T:X_{1}\times X_{2}\longrightarrow E$ is
associated with the mappings $T_{1}:x_{1}\longmapsto T_{1}(x_{1})$ and $%
T_{2}:x_{2}\longmapsto T_{2}(x_{2})$ for all $x_{1}\in X_{1}$ and $x_{2}\in
X_{2}$. Where $T_{1}(x_{1}):X_{2}\longrightarrow E,$ $T_{2}(x_{2}):X_{2}%
\longrightarrow E$ are the mappings defined by 
\begin{equation*}
T_{1}(x_{1})(x_{2})=T(x_{1},x_{2})=T_{2}(x_{2})(x_{1}).
\end{equation*}%
Thanks to an argument detailed in \cite[Proposition 2.2]{Hamidietall}, we
prove that $T_{1}\in Lip_{0}\left( X_{1},Lip_{0}(X_{2},E)\right) $ and $
T_{2}\in Lip_{0}\left( X_{2},Lip_{0}(X_{1},E)\right) $. Furthermore
\begin{equation}
Lip(T_{1})=BLip(T)=Lip(T_{2}).  \label{nrequa}
\end{equation}

\begin{definition}
	Let $\mathcal{I}_{Lip}^{1},\mathcal{I}_{Lip}^{2}$ be Lipschitz operator
	ideals, a two-Lipschitz operator $T\in BLip(X_{1},X_{2};E)$ is said to be of
	Lipschitzization type, in symbols $T\in \lbrack \mathcal{I}_{Lip}^{1},%
	\mathcal{I}_{Lip}^{2}](X_{1},X_{2};E)$, if $T_{1}\in \mathcal{I}%
	_{Lip}^{1}\left( X_{1},Lip_{0}(X_{2},E)\right) $ and $T_{2}\in \mathcal{I}%
	_{Lip}^{2}\left( X_{2},Lip_{0}(X_{1},E)\right) $.
\end{definition}

If $\mathcal{I}_{Lip}^{1},\mathcal{I}_{Lip}^{2}$ are normed Lipschitz
operator ideals and $T\in \lbrack \mathcal{I}_{Lip}^{1},\mathcal{I}%
_{Lip}^{2}](X_{1},X_{2};E)$ we define 
\begin{equation}
\left\Vert T\right\Vert _{[\mathcal{I}_{Lip}^{1},\mathcal{I}%
	_{Lip}^{2}]}=\max \left\{ \left\Vert T_{1}\right\Vert _{\mathcal{I}%
	_{Lip}^{1}},\left\Vert T_{2}\right\Vert _{\mathcal{I}_{Lip}^{2}}\right\} .
\label{norm}
\end{equation}

\begin{remark}
	\label{rem}
	
	\noindent (a) Let $T\in \lbrack \mathcal{I}_{Lip}^{1},\mathcal{I}%
	_{Lip}^{2}](X_{1},X_{2};E)$, then $Lip(T_{j})\leq \left\Vert
	T_{j}\right\Vert _{\mathcal{I}_{Lip}^{j}}$, $(j=1,2).$ By (\ref{nrequa}) and
	(\ref{norm}) we obtain 
	\begin{equation}
	BLip(T)\leq \left\Vert T\right\Vert _{[\mathcal{I}_{Lip}^{1},\mathcal{I}%
		_{Lip}^{2}]}.  \label{rm}
	\end{equation}
	
	\noindent (b) It is clear that if $f\in X_{1}^{\#}$, $g\in X_{2}^{\#}$ and $%
	e\in E,$ then the mapping $f\cdot g\cdot e$ defined in Example \ref{remex}
	is belongs to $[\mathcal{I}_{Lip}^{1},\mathcal{I}_{Lip}^{2}](X_{1},X_{2};E).$
	
	\noindent (c) Let $id_{\mathbb{K}^{2}}$ be the two-Lipschitz operator
	defined in Example \ref{remex}. It is easy to see that 
	\begin{equation*}
	(id_{\mathbb{K}^{2}})_{1}=u\circ id_{\mathbb{K}}\in \mathcal{I}_{Lip}^{1}(%
	\mathbb{K},Lip_{0}(\mathbb{K},\mathbb{K})),
	\end{equation*}%
	where $u$ is the linear operator $u:\mathbb{K}\longrightarrow Lip_{0}(%
	\mathbb{K},\mathbb{K})$ defined by $u(\alpha )=\alpha id_{\mathbb{K}}$ with $%
	\left\Vert u\right\Vert =1$. Furthermore, $\left\Vert (id_{\mathbb{K}%
		^{2}})_{1}\right\Vert _{\mathcal{I}_{Lip}^{1}}=1$. In a similar way one can
	prove that $\left\Vert (id_{\mathbb{K}^{2}})_{2}\right\Vert _{\mathcal{I}%
		_{Lip}^{2}}=1$ and then $\left\Vert id_{\mathbb{K}^{2}}\right\Vert _{[%
		\mathcal{I}_{Lip}^{1},\mathcal{I}_{Lip}^{2}]}=1.$
\end{remark}

The above remark and the next proposition asserts that $[\mathcal{I}%
_{Lip}^{1},\mathcal{I}_{Lip}^{2}]$ is a normed two-Lipschitz operator ideal
whenever $\mathcal{I}_{Lip}^{1}$ and $\mathcal{I}_{Lip}^{2}$ are normed
Lipschitz operator ideals. This method of constructing an ideal of
two-Lipschitz operators from ideals of Lipschitz operators is called the 
\emph{Lipschitzization method}.

\begin{proposition}
	Let $\mathcal{I}_{Lip}^{1},\mathcal{I}_{Lip}^{2}$ be normed Lipschitz
	operator ideals, $X_{1},X_{2}$ be pointed metric spaces and $E$ be Banach
	space.
	
	\begin{enumerate}
		\item $[\mathcal{I}_{Lip}^{1},\mathcal{I}_{Lip}^{2}](X_{1},X_{2};E)$ is a
		linear subspace of $BLip_{0}(X_{1},X_{2};E)$.
		
		\item $\left\Vert \cdot \right\Vert _{[\mathcal{I}_{Lip}^{1},\mathcal{I}%
			_{Lip}^{2}]}$ is a norm on $[\mathcal{I}_{Lip}^{1},\mathcal{I}%
		_{Lip}^{2}](X_{1},X_{2};E)$.
		
		\item (Ideal property). Let $f\in Lip_{0}(Z,X_{1})$, $g\in Lip_{0}(W,X_{2})$
		and $u\in \mathcal{L}(E,F)$. If $T\in \lbrack \mathcal{I}_{Lip}^{1},\mathcal{%
			I}_{Lip}^{2}](X_{1},X_{2};E)$, then the composition $u\circ T\circ (f,g)$ is
		in $[\mathcal{I}_{Lip}^{1},\mathcal{I}_{Lip}^{2}](Z,W;F)$ and 
		\begin{equation*}
		\Vert u\circ T\circ (f,g)\Vert _{\lbrack \mathcal{I}_{Lip}^{1},\mathcal{I}%
			_{Lip}^{2}]}\leq \left\Vert u\right\Vert \Vert T\Vert _{\lbrack \mathcal{I}%
			_{Lip}^{1},\mathcal{I}_{Lip}^{2}]}Lip(f)Lip(g).
		\end{equation*}
	\end{enumerate}
\end{proposition}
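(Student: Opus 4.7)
The plan is to decompose the proof into the three claims and reduce each to properties of the component maps $T_1$ and $T_2$ together with the already-established axioms for the Lipschitz operator ideals $\mathcal{I}_{Lip}^1$ and $\mathcal{I}_{Lip}^2$.

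For part (1), I would first observe that the assignment $T \mapsto T_j$ ($j=1,2$) is linear, since for $T,S \in BLip_0(X_1,X_2;E)$ and scalars $\lambda,\mu$, the computation
\[
(\lambda T+\mu S)_1(x_1)(x_2)=\lambda T(x_1,x_2)+\mu S(x_1,x_2)=\lambda T_1(x_1)(x_2)+\mu S_1(x_1)(x_2)
\]
yields $(\lambda T+\mu S)_1=\lambda T_1+\mu S_1$, and similarly for the second component. Since $\mathcal{I}_{Lip}^j(X_j,Lip_0(X_{3-j},E))$ is a linear subspace of $Lip_0(X_j,Lip_0(X_{3-j},E))$, linearity of $[\mathcal{I}_{Lip}^1,\mathcal{I}_{Lip}^2](X_1,X_2;E)$ follows. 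Part (2) reduces to checking the standard norm axioms: absolute homogeneity and the triangle inequality are inherited from the maximum of the two ideal norms via the same linearity of $T\mapsto T_j$, while the separation axiom $\|T\|_{[\mathcal{I}_{Lip}^1,\mathcal{I}_{Lip}^2]}=0\Rightarrow T=0$ follows from inequality (\ref{rm}), which gives $BLip(T)\le \|T\|_{[\mathcal{I}_{Lip}^1,\mathcal{I}_{Lip}^2]}$.

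The heart of the proof is (3), the ideal property. Given $f\in Lip_0(Z,X_1)$, $g\in Lip_0(W,X_2)$, $u\in \mathcal{L}(E,F)$ and $T\in [\mathcal{I}_{Lip}^1,\mathcal{I}_{Lip}^2](X_1,X_2;E)$, let $R=u\circ T\circ(f,g)$. The key identity is
\[
R_1(z)(w)=u\bigl(T(f(z),g(w))\bigr)=u\circ T_1(f(z))\circ g\,(w),
\]
which means $R_1=\Phi_g\circ T_1\circ f$, where $\Phi_g:Lip_0(X_2,E)\longrightarrow Lip_0(W,F)$ is the linear operator $\Phi_g(h)=u\circ h\circ g$, whose operator norm is bounded by $\|u\|\,Lip(g)$. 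Applying the ideal property (iii') of $\mathcal{I}_{Lip}^1$ to this factorization yields $R_1\in \mathcal{I}_{Lip}^1(Z,Lip_0(W,F))$ together with
\[
\|R_1\|_{\mathcal{I}_{Lip}^1}\le \|u\|\,Lip(g)\,\|T_1\|_{\mathcal{I}_{Lip}^1}\,Lip(f).
\]
A symmetric argument, using $R_2=\Phi_f\circ T_2\circ g$ with $\Phi_f(h)=u\circ h\circ f$, handles the second component. Taking the maximum recovers the desired estimate
\[
\|R\|_{[\mathcal{I}_{Lip}^1,\mathcal{I}_{Lip}^2]}\le \|u\|\,Lip(f)\,Lip(g)\,\|T\|_{[\mathcal{I}_{Lip}^1,\mathcal{I}_{Lip}^2]}.
\]

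The main obstacle is cleanly identifying the auxiliary linear operators $\Phi_g$ and $\Phi_f$ so that the reduction to the Lipschitz ideal axiom is immediate; once these intermediate maps are in hand, everything collapses to applying (iii') twice and taking a maximum. No deeper machinery is needed beyond the definitions already recalled.
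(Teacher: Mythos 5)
Your proposal is correct and follows essentially the same route as the paper: linearity of $T\mapsto T_j$ for parts (1)--(2), and for part (3) the factorization $\left(u\circ T\circ (f,g)\right)_1=\psi\circ T_1\circ f$ with the auxiliary linear operator $\psi(h)=u\circ h\circ g$ of norm at most $\Vert u\Vert Lip(g)$, which is exactly the map the paper uses. The only difference is that you spell out the norm axioms in (2), which the paper dismisses as straightforward.
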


\begin{proof}
	(1) Let $T,T^{\prime }\in \lbrack \mathcal{I}_{Lip}^{1},\mathcal{I}%
	_{Lip}^{2}](X_{1},X_{2};E)$ and $\alpha \in \mathbb{K}$. We have that 
	\begin{equation*}
	(\alpha T+T^{\prime })_{j}=\alpha T_{j}+T_{j}^{\prime }\in \mathcal{I}%
	_{Lip}^{j}.
	\end{equation*}
	
	(2) Is straightforward.
	
	(3) Let $f\in Lip_{0}(Z,X_{1})$, $g\in Lip_{0}(W,X_{2}),$ $u\in \mathcal{L}%
	(E,F)$ and $T\in \lbrack \mathcal{I}_{Lip}^{1},\mathcal{I}%
	_{Lip}^{2}](X_{1},X_{2};E).$ Consider the linear mapping $\psi
	:Lip_{0}(X_{2},E)\longrightarrow Lip_{0}(W,F)$ given by $\psi (R)=u\circ
	R\circ g,$ where $R\in Lip_{0}(X_{2},E).$ A simple calculation shows that 
	\begin{equation*}
	\left( u\circ T\circ (f,g)\right) _{1}=\psi \circ T_{1}\circ f\in \mathcal{I}%
	_{Lip}^{1}(Z;Lip_{0}(W,F)).
	\end{equation*}%
	The mapping $\psi $ is bounded and $\left\Vert \psi \right\Vert \leq
	\left\Vert u\right\Vert Lip(g),$ in order to see this we have 
	\begin{eqnarray*}
		\left\Vert \psi (R)\right\Vert  &=&\underset{w,w^{\prime }\in W,w\neq
			w^{\prime }}{\sup }\frac{\left\Vert \psi (R)(w)-\psi (R)(w^{\prime
			})\right\Vert }{d(w,w^{\prime })} \\
		&\leq &\left\Vert u\right\Vert Lip(g)Lip(R).
	\end{eqnarray*}%
	It follows that 
	\begin{eqnarray*}
		\left\Vert \left( u\circ T\circ (f,g)\right) _{1}\right\Vert _{\mathcal{I}%
			_{Lip}^{1}} &\leq &\left\Vert \psi \right\Vert \left\Vert T_{1}\right\Vert _{%
			\mathcal{I}_{Lip}^{1}}Lip(f) \\
		&\leq &\left\Vert u\right\Vert \left\Vert T_{1}\right\Vert _{\mathcal{I}%
			_{Lip}^{1}}Lip(f)Lip(g).
	\end{eqnarray*}%
	By a similar argument, we obtain 
	\begin{equation*}
	\left\Vert \left( u\circ T\circ (f,g)\right) _{2}\right\Vert _{\mathcal{I}%
		_{Lip}^{2}}\leq \left\Vert u\right\Vert \left\Vert T_{2}\right\Vert _{%
		\mathcal{I}_{Lip}^{2}}Lip(f)Lip(g),
	\end{equation*}%
	hence 
	\begin{equation*}
	\Vert u\circ T\circ (f,g)\Vert _{\left[ \mathcal{I}_{Lip}^{1},\mathcal{I}%
		_{Lip}^{2}\right] }\leq \left\Vert u\right\Vert \Vert T\Vert _{\lbrack 
		\mathcal{I}_{Lip}^{1},\mathcal{I}_{Lip}^{2}]}Lip(f)Lip(g).
	\end{equation*}
\end{proof}
It is necessary to investigate the closedness of the two-Lipschitz operator
ideal $[\mathcal{I}_{Lip}^{1},\mathcal{I}_{Lip}^{2}]$.

\begin{proposition}
	Let $\mathcal{I}_{Lip1}$ and $\mathcal{I}_{Lip2}$ be Lipschitz operator
	ideals. Then, 
	\begin{equation*}
	\overline{\lbrack \mathcal{I}_{Lip}^{1},\mathcal{I}_{Lip}^{2}]}\subset
	\lbrack \overline{\mathcal{I}_{Lip}^{1}},\overline{\mathcal{I}_{Lip}^{2}}].
	\end{equation*}%
	Consequently, if $\mathcal{I}_{Lip}^{1}$ and $\mathcal{I}_{Lip}^{2}$ are
	closed, then $[\mathcal{I}_{Lip}^{1},\mathcal{I}_{Lip}^{2}]$ is closed too.
\end{proposition}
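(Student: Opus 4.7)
The plan is to unpack what membership in $\overline{[\mathcal{I}_{Lip}^{1},\mathcal{I}_{Lip}^{2}]}(X_1,X_2;E)$ means, transfer the approximating sequence from the two-Lipschitz side to the two associated Lipschitz sides via the correspondences $T\mapsto T_1$ and $T\mapsto T_2$, and use the key identity $Lip(T_1)=BLip(T)=Lip(T_2)$ recorded in~(\ref{nrequa}) to preserve the approximation in the Lipschitz norms.

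Concretely, I would start by fixing $T\in \overline{[\mathcal{I}_{Lip}^{1},\mathcal{I}_{Lip}^{2}]}(X_1,X_2;E)$ and a sequence $(T_n)_n\subset [\mathcal{I}_{Lip}^{1},\mathcal{I}_{Lip}^{2}](X_1,X_2;E)$ with $BLip(T_n-T)\to 0$. Since the assignment $S\mapsto S_j$ $(j=1,2)$ is linear, one has $(T_n)_j-T_j=(T_n-T)_j$, and then~(\ref{nrequa}) applied to $T_n-T\in BLip_0(X_1,X_2;E)$ gives
\begin{equation*}
Lip\bigl((T_n)_j-T_j\bigr)=Lip\bigl((T_n-T)_j\bigr)=BLip(T_n-T)\xrightarrow[n\to\infty]{}0.
\end{equation*}
By definition of the Lipschitzization ideal, $(T_n)_1\in\mathcal{I}_{Lip}^{1}(X_1,Lip_0(X_2,E))$ and $(T_n)_2\in\mathcal{I}_{Lip}^{2}(X_2,Lip_0(X_1,E))$ for every $n$, so the previous display shows that $T_1$ and $T_2$ lie in the $Lip$-closures of $\mathcal{I}_{Lip}^{1}$ and $\mathcal{I}_{Lip}^{2}$ respectively, i.e.\ $T\in [\overline{\mathcal{I}_{Lip}^{1}},\overline{\mathcal{I}_{Lip}^{2}}](X_1,X_2;E)$, which gives the claimed inclusion.

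For the consequence, if $\mathcal{I}_{Lip}^{1}$ and $\mathcal{I}_{Lip}^{2}$ are closed then $\overline{\mathcal{I}_{Lip}^{j}}=\mathcal{I}_{Lip}^{j}$, so the inclusion just proved collapses to $\overline{[\mathcal{I}_{Lip}^{1},\mathcal{I}_{Lip}^{2}]}\subset [\mathcal{I}_{Lip}^{1},\mathcal{I}_{Lip}^{2}]$; the reverse inclusion is trivial from the definition of closure, so equality holds and $[\mathcal{I}_{Lip}^{1},\mathcal{I}_{Lip}^{2}]$ is closed.

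There is essentially no obstacle: the whole argument rests on the observation that $BLip$-convergence of $T_n$ to $T$ is precisely $Lip$-convergence of the associated Lipschitz maps $(T_n)_j$ to $T_j$, which is the content of~(\ref{nrequa}). The only mild subtlety worth stating cleanly is that $S\mapsto S_j$ is linear, so that the difference $(T_n)_j-T_j$ really equals $(T_n-T)_j$ and~(\ref{nrequa}) is applicable to it.
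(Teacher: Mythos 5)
Your proof is correct and follows essentially the same route as the paper: both arguments reduce the claim to the linearity of $S\mapsto S_j$ together with the identity $Lip(S_j)=BLip(S)$ from~(\ref{nrequa}), the only cosmetic difference being that the paper phrases closure membership via a decomposition $T=A+B$ with $BLip(B)<\varepsilon$ while you use an approximating sequence.
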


\begin{proof}
	Let $T\in \overline{[\mathcal{I}_{Lip}^{1},\mathcal{I}_{Lip}^{2}]}%
	(X_{1},X_{2};E)$. For each $\varepsilon >0$ we can choose two-Lipschitz
	operators $A,B:X_{1}\times X_{2}\longrightarrow E$ such that $A\in \lbrack 
	\mathcal{I}_{Lip}^{1},\mathcal{I}_{Lip}^{2}](X_{1},X_{2};E)$ with $%
	BLip(B)<\varepsilon $ and $T=A+B$. It is easy to see that $T_{1}=A_{1}+B_{1}.
	$ We have $A_{1}\in \mathcal{I}_{Lip}^{1}(X_{1},Lip_{0}(X_{2},E))$ and by (%
	\ref{nrequa}) we get $Lip(B_{1})<\varepsilon $, this means that $T_{1}\in 
	\overline{\mathcal{I}_{Lip}^{1}}(X_{1},Lip_{0}(X_{2},E))$. By a similar
	argument we proof that $T_{2}\in \overline{\mathcal{I}_{Lip}^{2}}%
	(X_{2},Lip_{0}(X_{1},E)).$ Hence, $T\in \lbrack \overline{\mathcal{I}%
		_{Lip}^{1}},\overline{\mathcal{I}_{Lip}^{2}}](X_{1},X_{2};E).$
\end{proof}
Following the idea of \cite[Page 309]{uniform}, our next aim is to show the
injectivity of the two-Lipschitz operator ideal constructed by the
Lipschitzization method. For the proof, we need the following inclusion results.

\begin{proposition}
	Let $\mathcal{I}_{Lip}^{1},\mathcal{I}_{Lip}^{2}$ be normed Lipschitz
	operator ideals. Then, 
	\begin{equation*}
	\lbrack \mathcal{I}_{Lip}^{1},\mathcal{I}_{Lip}^{2}]^{inj}\subset \lbrack (%
	\mathcal{I}_{Lip}^{1})^{inj},(\mathcal{I}_{Lip}^{2})^{inj}].
	\end{equation*}%
	Moreover we have $\left\Vert \cdot \right\Vert _{[(\mathcal{I}%
		_{Lip}^{1})^{inj},(\mathcal{I}_{Lip}^{2})^{inj}]}\leq \left\Vert \cdot
	\right\Vert _{[\mathcal{I}_{Lip}^{1},\mathcal{I}_{Lip}^{2}]^{inj}}.$
\end{proposition}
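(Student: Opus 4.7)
The plan is to take $T \in [\mathcal{I}_{Lip}^{1}, \mathcal{I}_{Lip}^{2}]^{inj}(X_{1},X_{2};E)$ and unfold the definition: by hypothesis $J_{E}\circ T$ belongs to $[\mathcal{I}_{Lip}^{1}, \mathcal{I}_{Lip}^{2}](X_{1},X_{2};\ell_{\infty}(B_{E^{\ast}}))$, which by definition of the Lipschitzization method means that $(J_{E}\circ T)_{1} \in \mathcal{I}_{Lip}^{1}(X_{1}, Lip_{0}(X_{2},\ell_{\infty}(B_{E^{\ast}})))$ and an analogous statement holds for the second slice. The goal is to prove that $T_{1}$ lies in $(\mathcal{I}_{Lip}^{1})^{inj}(X_{1}, Lip_{0}(X_{2},E))$ and $T_{2}$ in $(\mathcal{I}_{Lip}^{2})^{inj}(X_{2}, Lip_{0}(X_{1},E))$, together with the announced norm control.

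The main technical ingredient is the linear map
\begin{equation*}
\Phi : Lip_{0}(X_{2},E) \longrightarrow Lip_{0}(X_{2}, \ell_{\infty}(B_{E^{\ast}})), \qquad \Phi(R) = J_{E}\circ R.
\end{equation*}
Since $J_{E}$ is a metric injection, a one-line computation shows that $\Phi$ is a linear isometric embedding and that the slice commutes with post-composition, i.e. $(J_{E}\circ T)_{1} = \Phi \circ T_{1}$ (and symmetrically for the second slice). Now I would invoke the metric extension property of $\ell_{\infty}(B_{Lip_{0}(X_{2},E)^{\ast}})$ already used in the proof that $\mathcal{I}_{BLip}^{inj}$ is an ideal: because $\Phi$ isometrically embeds $Lip_{0}(X_{2},E)$ into $Lip_{0}(X_{2}, \ell_{\infty}(B_{E^{\ast}}))$ and $J_{Lip_{0}(X_{2},E)}$ is a contraction, the map $J_{Lip_{0}(X_{2},E)}\circ \Phi^{-1}$, defined on the range of $\Phi$, extends to a linear operator
\begin{equation*}
w \in \mathcal{L}\!\left( Lip_{0}(X_{2}, \ell_{\infty}(B_{E^{\ast}})), \ell_{\infty}(B_{Lip_{0}(X_{2},E)^{\ast}}) \right)
\end{equation*}
with $\|w\|=1$ and $w\circ \Phi = J_{Lip_{0}(X_{2},E)}$. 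Then
\begin{equation*}
J_{Lip_{0}(X_{2},E)}\circ T_{1} = w\circ \Phi \circ T_{1} = w\circ (J_{E}\circ T)_{1},
\end{equation*}
and the ideal property of $\mathcal{I}_{Lip}^{1}$ places this in $\mathcal{I}_{Lip}^{1}(X_{1}, \ell_{\infty}(B_{Lip_{0}(X_{2},E)^{\ast}}))$, i.e.\ $T_{1}\in (\mathcal{I}_{Lip}^{1})^{inj}(X_{1}, Lip_{0}(X_{2},E))$. The symmetric argument handles $T_{2}$.

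For the norm estimate, the same factorization gives
\begin{equation*}
\|T_{1}\|_{(\mathcal{I}_{Lip}^{1})^{inj}} = \|w\circ (J_{E}\circ T)_{1}\|_{\mathcal{I}_{Lip}^{1}} \leq \|w\|\,\|(J_{E}\circ T)_{1}\|_{\mathcal{I}_{Lip}^{1}} \leq \|J_{E}\circ T\|_{[\mathcal{I}_{Lip}^{1}, \mathcal{I}_{Lip}^{2}]} = \|T\|_{[\mathcal{I}_{Lip}^{1}, \mathcal{I}_{Lip}^{2}]^{inj}},
\end{equation*}
and the symmetric bound for $T_{2}$ together with the definition of the Lipschitzization norm as a maximum yields the desired inequality. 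The main obstacle is simply identifying the correct extending operator $w$: one must recognize that $\Phi$ is genuinely a linear \emph{isometric} embedding (so that $\Phi^{-1}$ is a well-defined contraction on its range) and then apply the metric extension property of $\ell_{\infty}$-spaces exactly as in the proof that the two-Lipschitz injective hull is itself an ideal. Everything else reduces to the ideal axioms and the identity $(J_{E}\circ T)_{j} = \Phi\circ T_{j}$.
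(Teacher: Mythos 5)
Your proposal is correct and follows essentially the same route as the paper: your isometric embedding $\Phi$ is exactly the paper's metric injection $j_{1}(f)=J_{E}\circ f$, the identity $(J_{E}\circ T)_{1}=\Phi\circ T_{1}$ is verified pointwise in both arguments, and your extension operator $w$ obtained from the metric extension property of $\ell_{\infty}(B_{Lip_{0}(X_{2},E)^{\ast}})$ is the paper's $\phi_{1}$, with the identical norm chain $\|T_{1}\|_{(\mathcal{I}_{Lip}^{1})^{inj}}\leq\|w\|\,\|(J_{E}\circ T)_{1}\|_{\mathcal{I}_{Lip}^{1}}\leq\|T\|_{[\mathcal{I}_{Lip}^{1},\mathcal{I}_{Lip}^{2}]^{inj}}$. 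No gaps; your added remark that $w$ arises by extending the contraction $J_{Lip_{0}(X_{2},E)}\circ\Phi^{-1}$ from the range of $\Phi$ merely makes explicit what the paper leaves implicit.
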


\begin{proof}
	Take $T\in \lbrack \mathcal{I}_{Lip}^{1},\mathcal{I}_{Lip}^{2}]^{inj}$ and
	consider the metric injection $j_{1}:Lip_{0}(X_{2},E)\longrightarrow
	Lip_{0}(X_{2},\ell _{\infty }(B_{E^{\ast }}))$ given by $j_{1}(f):=J_{E}%
	\circ f.$ For any $x_{1}\in X_{1}$ and $x_{2}\in X_{2}$ we have 
	\begin{eqnarray*}
		j_{1}\left( T_{1}(x_{1})\right) (x_{2}) &=&\left( J_{E}\circ
		T_{1}(x_{1})\right) (x_{2}) \\
		&=&J_{E}\left( T(x_{1},x_{2})\right)  \\
		&=&\left( J_{E}\circ T\right) _{1}(x_{1})(x_{2}),
	\end{eqnarray*}%
	showing that $j_{1}\circ T_{1}=\left( J_{E}\circ T\right) _{1}.$ By the
	metric extension property of the space $\ell _{\infty
	}(B_{Lip_{0}(X_{2},E)^{\ast }})$, we find a linear operator $\phi _{1}$ from 
	$Lip_{0}(X_{2},\ell _{\infty }(B_{E^{\ast }}))$ to $\ell _{\infty
	}(B_{Lip_{0}(X_{2},E)^{\ast }})$ such that $\phi _{1}\circ
	j_{1}=J_{Lip_{0}(X_{2},E)}$ and $\left\Vert \phi _{1}\right\Vert =1.$ By
	this notations we have 
	\begin{equation*}
	J_{Lip_{0}(X_{2},E)}\circ T_{1}=\phi _{1}\circ \left( J_{E}\circ T\right)
	_{1}\in \mathcal{I}_{Lip1}(X_{1},\ell _{\infty }\left(
	B_{Lip_{0}(X_{2},E)^{\ast }})\right) ,
	\end{equation*}%
	It results that $T_{1}$ belongs to $(\mathcal{I}_{Lip}^{1})^{inj}\left(
	X_{1},Lip_{0}(X_{2},E\right) .$ In addition, 
	\begin{eqnarray*}
		\left\Vert T_{1}\right\Vert _{(\mathcal{I}_{Lip}^{1})^{inj}} &\leq
		&\left\Vert \phi _{1}\right\Vert \left\Vert \left( J_{E}\circ T\right)
		_{1}\right\Vert _{\mathcal{I}_{Lip}^{1}} \\
		&\leq &\left\Vert J_{E}\circ T\right\Vert _{[\mathcal{I}_{Lip}^{1},\mathcal{I%
			}_{Lip}^{2}]} \\
		&=&\left\Vert T\right\Vert _{[\mathcal{I}_{Lip}^{1},\mathcal{I}%
			_{Lip}^{2}]^{inj}}.
	\end{eqnarray*}%
	By a similar argument, we get $T_{2}\in (\mathcal{I}_{Lip}^{2})^{inj}\left(
	X_{2},Lip_{0}(X_{1},E\right) $ and 
	\begin{equation*}
	\left\Vert T_{2}\right\Vert _{(\mathcal{I}_{Lip}^{2})^{inj}}\leq \left\Vert
	T\right\Vert _{[\mathcal{I}_{Lip}^{1},\mathcal{I}_{Lip}^{2}]^{inj}}.
	\end{equation*}%
	Thus, $T$ belongs to $[(\mathcal{I}_{Lip}^{1})^{inj},(\mathcal{I}%
	_{Lip}^{2})^{inj}]$ with $\left\Vert T\right\Vert _{[(\mathcal{I}%
		_{Lip}^{1})^{inj},(\mathcal{I}_{Lip}^{2})^{inj}]}\leq \left\Vert
	T\right\Vert _{[\mathcal{I}_{Lip}^{1},\mathcal{I}_{Lip}^{2}]^{inj}}.$
\end{proof}
\begin{corollary}
	If $\mathcal{I}_{Lip1}$ and $\mathcal{I}_{Lip2}$ are injective Lipschitz
	operator ideals, then $\left[ \mathcal{I}_{Lip1},\mathcal{I}_{Lip2}\right] $
	is injective two-Lipschitz operator ideal.
\end{corollary}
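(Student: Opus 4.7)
The plan is to derive this corollary essentially as a direct consequence of the preceding proposition, combined with the general containment from Proposition \ref{prop1}. Since the preceding proposition already packages the nontrivial work (factorization through the canonical injection $J_{E}$ and the metric extension property to get the operator $\phi_{1}$), the corollary amounts to substituting the hypothesis $(\mathcal{I}_{Lip}^{j})^{inj}=\mathcal{I}_{Lip}^{j}$ ($j=1,2$) into that inclusion.

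First I would record the one-line inclusion: by the preceding proposition applied to the hypothesis of injectivity of $\mathcal{I}_{Lip}^{1}$ and $\mathcal{I}_{Lip}^{2}$,
\begin{equation*}
[\mathcal{I}_{Lip}^{1},\mathcal{I}_{Lip}^{2}]^{inj}\subset [(\mathcal{I}_{Lip}^{1})^{inj},(\mathcal{I}_{Lip}^{2})^{inj}]=[\mathcal{I}_{Lip}^{1},\mathcal{I}_{Lip}^{2}],
\end{equation*}
together with the corresponding norm domination $\left\Vert \cdot \right\Vert _{[\mathcal{I}_{Lip}^{1},\mathcal{I}_{Lip}^{2}]}\leq \left\Vert \cdot \right\Vert _{[\mathcal{I}_{Lip}^{1},\mathcal{I}_{Lip}^{2}]^{inj}}$ coming from the same proposition combined with the equality of the norms of $(\mathcal{I}_{Lip}^{j})^{inj}$ and $\mathcal{I}_{Lip}^{j}$ under injectivity.

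Next I would invoke the reverse inclusion, which holds for any normed two-Lipschitz operator ideal by part (1) of Proposition \ref{prop1}, namely
\begin{equation*}
[\mathcal{I}_{Lip}^{1},\mathcal{I}_{Lip}^{2}]\subset [\mathcal{I}_{Lip}^{1},\mathcal{I}_{Lip}^{2}]^{inj}\quad\text{with}\quad\left\Vert \cdot \right\Vert _{[\mathcal{I}_{Lip}^{1},\mathcal{I}_{Lip}^{2}]^{inj}}\leq\left\Vert \cdot \right\Vert _{[\mathcal{I}_{Lip}^{1},\mathcal{I}_{Lip}^{2}]}.
\end{equation*}
Combining these two containments gives $[\mathcal{I}_{Lip}^{1},\mathcal{I}_{Lip}^{2}]=[\mathcal{I}_{Lip}^{1},\mathcal{I}_{Lip}^{2}]^{inj}$ with matching norms, which is exactly the definition of injectivity for the two-Lipschitz operator ideal $[\mathcal{I}_{Lip}^{1},\mathcal{I}_{Lip}^{2}]$.

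There is no real obstacle in this argument: the substantive content, namely the commutation of the metric injection $J_{E}$ with the slicing maps $T\mapsto T_{1}, T_{2}$ and the use of the metric extension property of $\ell_{\infty}(B_{Lip_{0}(X_{2},E)^{\ast}})$, was already handled in the preceding proposition. The only thing to check carefully in writing up is that the norm inequalities line up so the equality $\left\Vert \cdot \right\Vert _{[\mathcal{I}_{Lip}^{1},\mathcal{I}_{Lip}^{2}]}=\left\Vert \cdot \right\Vert _{[\mathcal{I}_{Lip}^{1},\mathcal{I}_{Lip}^{2}]^{inj}}$ is obtained, not just the set-theoretic coincidence, so I would explicitly write both norm inequalities before declaring the ideal injective in the sense of the definition stated just after Remark \ref{injec}.
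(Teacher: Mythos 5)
Your proposal is correct and follows exactly the paper's argument: apply the preceding proposition to get $[\mathcal{I}_{Lip}^{1},\mathcal{I}_{Lip}^{2}]^{inj}\subset[(\mathcal{I}_{Lip}^{1})^{inj},(\mathcal{I}_{Lip}^{2})^{inj}]=[\mathcal{I}_{Lip}^{1},\mathcal{I}_{Lip}^{2}]$, with the reverse inclusion coming from Proposition \ref{prop1}(1). You are in fact slightly more careful than the paper, which omits the explicit verification of the norm equality, so your write-up is fine as is.
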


\begin{proof}
	It it follows from the above proposition that
	\begin{equation*}
	\left[ \mathcal{I}_{Lip}^{1},\mathcal{I}_{Lip}^{2}\right] ^{inj}\subset %
	\left[ (\mathcal{I}_{Lip}^{1})^{inj},(\mathcal{I}_{Lip}^{2})^{inj}\right] =%
	\left[ \mathcal{I}_{Lip}^{1},\mathcal{I}_{Lip}^{2}\right] .
	\end{equation*}
\end{proof}

Similar to the multilinear caes (see \cite{Floret}), we define the notion of
symmetric two-Lipschitz operator and we investigate the symmetry of the
Lipschitzization ideals.

In what follows we take $X_{2}=X_{2}=X.$ Note that if $T\in BLip_{0}(X,X;E)$%
, the symmetric two-Lipschitz operator $T^{s}\in BLip_{0}(X,X;E)$ associated
with $T$ is given by%
\begin{equation*}
T^{s}(x_{1},x_{2})=\frac{1}{2}\left( T(x_{1},x_{2})+T(x_{2},x_{1})\right) ,
\end{equation*}%
for every $x_{1},x_{2}\in X.$ A simple calculation gives 
\begin{equation}
(T^{s})_{j}=\frac{1}{2}\left( T_{1}+T_{2}\right) ,\text{ }j=1,2.  \label{sm}
\end{equation}

\begin{definition}
	The two-Lipschitz operator ideal $\mathcal{I}_{BLip}$ is said to be
	symmetric if $T^{s}\in \mathcal{I}_{BLip}(X,X;E)$ whenever $T\in \mathcal{I}%
	_{BLip}(X,X;E).$
\end{definition}

\begin{theorem}
	Let $\mathcal{I}_{Lip}^{1}$ and $\mathcal{I}_{Lip}^{2}$ be Lipschitz
	operator ideals. Then the following assertions are equivalent.
	
	\begin{enumerate}
		\item[(1)] The two-Lipschitz operator ideal $\left[ \mathcal{I}_{Lip}^{1},%
		\mathcal{I}_{Lip}^{2}\right] $ is symmetric.
		
		\item[(2)] $\left[ \mathcal{I}_{Lip}^{1},\mathcal{I}_{Lip}^{2}\right] =\left[
		\mathcal{I}_{Lip}^{2},\mathcal{I}_{Lip}^{1}\right] .$
		
		\item[(3)] $\mathcal{I}_{Lip}^{1}=\mathcal{I}_{Lip}^{2}.$
	\end{enumerate}
\end{theorem}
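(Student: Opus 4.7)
The plan is to prove the cyclic chain $(3)\Rightarrow(2)\Rightarrow(1)\Rightarrow(3)$. The first implication $(3)\Rightarrow(2)$ is immediate, since once $\mathcal{I}_{Lip}^{1}$ is identified with $\mathcal{I}_{Lip}^{2}$ the ordered brackets $[\mathcal{I}_{Lip}^{1},\mathcal{I}_{Lip}^{2}]$ and $[\mathcal{I}_{Lip}^{2},\mathcal{I}_{Lip}^{1}]$ describe the same class by definition.

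For $(2)\Rightarrow(1)$ I would exploit the identity $(T^{s})_{j}=\tfrac{1}{2}(T_{1}+T_{2})$ from (\ref{sm}). Given $T\in [\mathcal{I}_{Lip}^{1},\mathcal{I}_{Lip}^{2}](X,X;E)$, assumption (2) also places $T$ in $[\mathcal{I}_{Lip}^{2},\mathcal{I}_{Lip}^{1}]$, so $T_{1},T_{2}\in \mathcal{I}_{Lip}^{1}\cap \mathcal{I}_{Lip}^{2}$. Linearity of each component then yields $(T^{s})_{1}\in \mathcal{I}_{Lip}^{1}$ and $(T^{s})_{2}\in \mathcal{I}_{Lip}^{2}$, i.e.\ $T^{s}\in [\mathcal{I}_{Lip}^{1},\mathcal{I}_{Lip}^{2}]$.

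The substantive direction is $(1)\Rightarrow (3)$. I would show $\mathcal{I}_{Lip}^{1}(X,E)\subseteq \mathcal{I}_{Lip}^{2}(X,E)$ and then close the argument by swapping the two ideals. Fix $f\in \mathcal{I}_{Lip}^{1}(X,E)$; discarding the trivial case $X=\{0\}$, choose $g\in X^{\#}$ and $x_{0}\in X$ with $g(x_{0})\neq 0$ (for example $g(x)=d(x,0)$ in the real case, or a McShane extension). Test the symmetry on the two-Lipschitz map $T(x_{1},x_{2})=g(x_{2})f(x_{1})$. Introducing the bounded linear operator $\phi :E\longrightarrow Lip_{0}(X,E)$, $\phi (e)(x)=g(x)e$, one identifies $T_{1}=\phi \circ f$, so the ideal property of $\mathcal{I}_{Lip}^{1}$ gives $T_{1}\in \mathcal{I}_{Lip}^{1}(X,Lip_{0}(X,E))$; on the other hand $T_{2}(x_{2})=g(x_{2})\cdot f$ has the form $v\cdot g$ with $v=f\in Lip_{0}(X,E)$, so axiom (ii) of a Lipschitz operator ideal places $T_{2}\in \mathcal{I}_{Lip}^{2}(X,Lip_{0}(X,E))$ for free. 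Hence $T\in [\mathcal{I}_{Lip}^{1},\mathcal{I}_{Lip}^{2}]$, and (1) forces $T^{s}$ into the same class. Reading off the second coordinate, $(T^{s})_{2}=\tfrac{1}{2}(T_{1}+T_{2})\in \mathcal{I}_{Lip}^{2}$, and combined with $T_{2}\in \mathcal{I}_{Lip}^{2}$ this yields $T_{1}\in \mathcal{I}_{Lip}^{2}$. The bounded linear evaluation $\pi :Lip_{0}(X,E)\longrightarrow E$, $\pi (h)=h(x_{0})/g(x_{0})$, then satisfies $\pi \circ T_{1}=f$, so the ideal property of $\mathcal{I}_{Lip}^{2}$ returns $f\in \mathcal{I}_{Lip}^{2}(X,E)$. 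The reverse inclusion comes from the symmetric test operator $T(x_{1},x_{2})=g(x_{1})f(x_{2})$ and the same scheme.

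The main obstacle I anticipate is the design of the test operator in $(1)\Rightarrow (3)$: one has to arrange that exactly one partial map encodes $f$ through left-composition with a bounded linear operator (so that $f\in \mathcal{I}_{Lip}^{1}$ transmits to $T_{1}\in \mathcal{I}_{Lip}^{1}$ via the ideal property), while the other partial map is automatically absorbed by the rank-one axiom (ii), and simultaneously a bounded linear retraction $\pi $ must be available to extract $f$ back out of $T_{1}$. Once this scaffold is in place, the remaining verifications reduce to routine applications of the ideal property together with formula (\ref{sm}).
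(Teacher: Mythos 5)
Your proposal is correct and follows essentially the same route as the paper: the identity $(T^{s})_{j}=\tfrac{1}{2}(T_{1}+T_{2})$ for $(2)\Rightarrow(1)$, and for $(1)\Rightarrow(3)$ the same rank-one test operator $T(x_{1},x_{2})=g(x_{2})f(x_{1})$, with one partial map obtained by left-composing $f$ with a bounded linear operator and the other absorbed by axiom (ii), followed by the recovery $T_{1}=2(T^{s})_{2}-T_{2}$ and an evaluation retraction. The only cosmetic difference is that the paper normalizes the scalar functional to satisfy $g(a)=1$ rather than dividing by $g(x_{0})$ in the retraction.
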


\begin{proof}
	$(2)\Longrightarrow (1)$ Let $T\in \left[ \mathcal{I}_{Lip}^{1},\mathcal{I}%
	_{Lip}^{2}\right] (X,X;E)$. By the hypothesis we have $T_{j}\in \mathcal{I}%
	_{Lip}^{k}(X,Lip_{0}(X,E))$ and by using (\ref{sm}) we get $(T^{s})_{j}\in 
	\mathcal{I}_{Lip}^{k}(X,Lip_{0}(X,E))$ for all $j,k=1,2.$ Then $T^{s}\in %
	\left[ \mathcal{I}_{Lip}^{1},\mathcal{I}_{Lip}^{2}\right] (X,X;E)$.
	
	$(1)\Longrightarrow (3)$ Let $\phi \in \mathcal{I}_{\mathcal{I}%
		_{Lip}^{1}}(X,E)$. Fix $f\in X^{\#}$ and $a\in X$ with $f(a)=1$. Consider a
	two-Lipschitz operator $T\in BLip_{0}(X,X;E)$ and a linear operator $u\in 
	\mathcal{L(}E,Lip_{0}(X,E))$ given by $T(x_{1},x_{2}):=\phi (x_{1})f(x_{2})$
	and $u(e)(x_{2}):=f(x_{2})e$, for all $x_{1},x_{2}\in X$ and $e\in E$. It
	follow that $u\circ \phi =T_{1}\in \mathcal{I}_{\mathcal{I}%
		_{Lip}^{1}}(X,Lip_{0}(X,E))$. On the other hand, for every $x_{2}\in X$ we
	have $T_{2}(x_{2})=f(x_{2})\phi $, then $T_{2}\in \mathcal{I}_{\mathcal{I}%
		_{Lip}^{2}}(X,Lip_{0}(X,E))$. By the hypothesis we have that $T^{s}\in \left[
	\mathcal{I}_{Lip}^{1},\mathcal{I}_{Lip}^{2}\right] (X,X;E)$, from which it
	follows that $T_{1}=2(T^{s})_{2}-T_{2}\in \mathcal{I}_{\mathcal{I}%
		_{Lip}^{2}}(X,Lip_{0}(X,E))$. If we define the linear operator $%
	v:Lip_{0}(X,E)\longrightarrow E$ by $v(f):=f(a)$, we obtain $v\circ
	T_{1}=\phi \in \mathcal{I}_{\mathcal{I}_{Lip}^{2}}(X,E)$. Therefore $%
	\mathcal{I}_{\mathcal{I}_{Lip}^{1}}\subset \mathcal{I}_{\mathcal{I}%
		_{Lip}^{2}}$. Using a similar argument as above we get the reverse inclusion.
	
	The direction $(3)\Longrightarrow (2)$ is obvious.
\end{proof}

\subsection{Factorization method}
Given the Lipschitz operator ideals $\mathcal{I}_{Lip}^{1},\mathcal{I}%
_{Lip}^{2}$, a two-Lipschitz operator $T\in BLip_{0}(X_{1},X_{2};E)$ is said
to be of type $BLip(\mathcal{I}_{Lip}^{1},\mathcal{I}_{Lip}^{2})$, in
symbols $T\in BLip(\mathcal{I}_{Lip}^{1},\mathcal{I}%
_{Lip}^{2})(X_{1},X_{2};E)$, if there are Banach spaces $G_{1},G_{2}$,
Lipschitz operators $f_{j}\in \mathcal{I}_{Lip}^{j}(X_{j},G_{j})$ , $j=1,2$,
and a two-Lipschitz operator $R\in BLip_{0}(G_{1},G_{2};E)$ such that $%
T=R\circ (f_{1},f_{2})$. If $\mathcal{I}_{Lip}^{1},\mathcal{I}_{Lip}^{2}$
are normed Lipschitz operator ideals and $T\in BLip(\mathcal{I}_{Lip}^{1},%
\mathcal{I}_{Lip}^{2})(X_{1},X_{2};E)$ we define 
\begin{equation*}
\left\Vert T\right\Vert _{BLip(\mathcal{I}_{Lip}^{1},\mathcal{I}%
	_{Lip}^{2})}=\inf BLip(R)\left\Vert f_{1}\right\Vert _{\mathcal{I}%
	_{Lip}^{1}}\left\Vert f_{2}\right\Vert _{\mathcal{I}_{Lip}^{2}},
\end{equation*}%
where the infimum is taken over all possible factorizations $T=R\circ
(f_{1},f_{2})$ with $f_{j}$ belonging to $\mathcal{I}_{Lip}^{j},(j=1,2)$ and
the two-Lipschitz operator $R$.

\begin{remark}
	\label{rpop1} If $T\in BLip(\mathcal{I}_{Lip}^{1},\mathcal{I}%
	_{Lip}^{2})(X_{1},X_{2};E)$ then 
	\begin{equation}
	BLip(T)\leq \left\Vert T\right\Vert _{BLip(\mathcal{I}_{Lip}^{1},\mathcal{I}%
		_{Lip}^{2})}.  \label{pop2}
	\end{equation}%
	Indeed, there are Banach spaces $G_{1},G_{2}$, $f_{j}\in \mathcal{I}%
	_{Lip}^{j}(X_{j},G_{j})$, $j=1,2$, and $R\in BLip_{0}(G_{1},G_{2};E)$ such
	that $T=R\circ (f_{1},f_{2})$. By \cite[Proposition 2.3]{Hamidietall} we
	have 
	\begin{equation*}
	BLip(T)\leq BLip(R)Lip(f_{1})Lip(f_{2})\leq BLip(R)\left\Vert
	f_{1}\right\Vert _{\mathcal{I}_{Lip}^{1}}\left\Vert f_{2}\right\Vert _{%
		\mathcal{I}_{Lip}^{2}}.
	\end{equation*}%
	Passing to the infimum over all $R,f_{1}$ and $f_{2}$ we get (\ref{pop2}).
\end{remark}

\begin{example}
	\label{remex} Let us show some basic ways of constructing two-Lipschitz
	operator belonging to $BLip(\mathcal{I}_{Lip}^{1},\mathcal{I}_{Lip}^{2})$.
	Let $X_{1}$ and $X_{2}$ be pointed metric spaces and let $E$ be Banach spac.
	
	\noindent (a) Consider non-zero Lipschitz functions $f\in X_{1}^{\#}$, $g\in
	X_{2}^{\#}$ and $e\in E.$ The mapping $f\cdot g\cdot e:X_{1}\times
	X_{2}\longrightarrow E$ defined by $f\cdot g\cdot e(x,y)=f(x)g(y)e$ is a
	two-Lipschitz operator with $BLip(f\cdot g\cdot e)=Lip(f)Lip(g)\left\Vert
	e\right\Vert $ (see \cite[Page 7]{Hamidietall}). \textit{Every }mappings of
	this form is belongs\textit{\ to }$BLip(\mathcal{I}_{Lip}^{1},\mathcal{I}%
	_{Lip}^{2})$ .\ A simple calculation shows this result by taking into
	account that $f\cdot g\cdot e=B\circ (f,g)$, where $B:\mathbb{K}%
	^{2}\longrightarrow E$ is the two-Lipschitz operator defined by $B(\alpha
	,\beta )=\alpha \beta e$.
	
	\noindent (b) \textit{Let us show a particular example of the case mentioned
		above. Let }$id_{\mathbb{K}^{2}}$ be the two-Lipschitz operator $id_{\mathbb{%
			K}^{2}}:\mathbb{K}^{2}\mathbb{\longrightarrow K}$ \textit{given by} $id_{%
		\mathbb{K}^{2}}(\alpha ,\beta )=\alpha \beta .$\textit{\ It is clear that }$%
	id_{\mathbb{K}^{2}}=id_{\mathbb{K}^{2}}\circ (id_{\mathbb{K}},id_{\mathbb{K}%
	})$ and then 
	\begin{equation*}
	\left\Vert id_{\mathbb{K}^{2}}\right\Vert _{BLip(\mathcal{I}_{Lip}^{1},%
		\mathcal{I}_{Lip}^{2})}\leq BLip(id_{\mathbb{K}^{2}})\left\Vert id_{\mathbb{K%
	}}\right\Vert _{\mathcal{I}_{Lip}^{1}}\left\Vert id_{\mathbb{K}}\right\Vert
	_{\mathcal{I}_{Lip}^{2}}=1.
	\end{equation*}%
	On the other hand, there are Banach spaces $G_{1},G_{2}$, $f_{j}\in \mathcal{%
		I}_{Lip}^{j}(\mathbb{K},G_{j})$ , $j=1,2$, and $R\in BLip_{0}(G_{1},G_{2};%
	\mathbb{K})$ such that $id_{\mathbb{K}^{2}}=R\circ (f_{1},f_{2})$ with 
	\begin{eqnarray*}
		1 &=&BLip(R\circ (f_{1},f_{2})) \\
		&\leq &BLip(R)Lip(f_{1})Lip(f_{2}) \\
		&\leq &BLip(R)\left\Vert f_{1}\right\Vert _{\mathcal{I}_{Lip}^{1}}\left\Vert
		f_{2}\right\Vert _{\mathcal{I}_{Lip}^{2}}.
	\end{eqnarray*}%
	Taking the infimum over all $R,$ $f_{1},$ and $f_{2}$ as above, we get $%
	1\leq \left\Vert id_{\mathbb{K}^{2}}\right\Vert _{BLip(\mathcal{I}_{Lip}^{1},%
		\mathcal{I}_{Lip}^{2})}$.
\end{example}

As a consequence of Remark \ref{rpop1}, Example \ref{remex} and the next
results we obtain that $\left( BLip(\mathcal{I}_{Lip}^{1},\mathcal{I}%
_{Lip}^{2}),\left\Vert \cdot \right\Vert _{BLip(\mathcal{I}_{Lip}^{1},%
	\mathcal{I}_{Lip}^{2})}\right) $ is a quasi-normed two-Lipschitz operator
ideal. This method of constructing an ideal of two-Lipschitz operators from
ideals of Lipschitz operators is called the \emph{factorization method}.

\begin{proposition}
	If $\mathcal{I}_{Lip}^{1},\mathcal{I}_{Lip}^{2}$ are normed Lipschitz
	operator ideals, then \newline 
		 $BLip(\mathcal{I}_{Lip}^{1},\mathcal{I}
	_{Lip}^{2})(X_{1},X_{2};E)$ is a linear subspace of $BLip_{0}(X_{1},X_{2};E)$
	and \newline  
		$\left\Vert \cdot \right\Vert _{BLip(\mathcal{I}_{Lip}^{1},\mathcal{I}
		_{Lip}^{2})}$ is a quasi-norm on $BLip(\mathcal{I}_{Lip}^{1},\mathcal{I}%
	_{Lip}^{2})(X_{1},X_{2};E)$.
\end{proposition}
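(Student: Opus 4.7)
The plan is to verify the two requirements separately: first the linear subspace property (closure under scalar multiples and sums), then the axioms for the quasi-norm. Definiteness of the quasi-norm comes for free from the inequality $BLip(T)\le\|T\|_{BLip(\mathcal{I}_{Lip}^{1},\mathcal{I}_{Lip}^{2})}$ noted in Remark~\ref{rpop1}, and absolute homogeneity is immediate on rescaling the two-Lipschitz factor $R$ in any admissible factorization.

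For scalar multiplication, given $T=R\circ(f_{1},f_{2})$ and $\alpha\in\mathbb{K}$, just observe that $\alpha T=(\alpha R)\circ(f_{1},f_{2})$ with $\alpha R$ still two-Lipschitz and $BLip(\alpha R)=|\alpha|BLip(R)$. For closure under addition, which I expect to be the main obstacle, the strategy is the standard direct-sum trick. Given $T=R\circ(f_{1},f_{2})\in BLip(\mathcal{I}_{Lip}^{1},\mathcal{I}_{Lip}^{2})(X_{1},X_{2};E)$ with intermediate spaces $G_{1},G_{2}$ and $T'=R'\circ(f_{1}',f_{2}')$ with intermediate spaces $G_{1}',G_{2}'$, equip each product $G_{j}\oplus G_{j}'$ with the $\ell_{\infty}$-norm and define
\begin{equation*}
\tilde{f}_{j}:X_{j}\longrightarrow G_{j}\oplus G_{j}',\quad \tilde{f}_{j}(x)=(f_{j}(x),f_{j}'(x)),
\end{equation*}
together with
\begin{equation*}
\tilde{R}:(G_{1}\oplus G_{1}')\times(G_{2}\oplus G_{2}')\longrightarrow E,\quad \tilde{R}((a,a'),(b,b'))=R(a,b)+R'(a',b').
\end{equation*}
Then $T+T'=\tilde{R}\circ(\tilde{f}_{1},\tilde{f}_{2})$. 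A direct computation of the four-term increment of $\tilde{R}$, combined with $d(g,h)\le d((g,g'),(h,h'))$ in the $\ell_{\infty}$-direct sum, gives $BLip(\tilde{R})\le BLip(R)+BLip(R')$, and plainly $\tilde{R}$ vanishes on the axes. To see that $\tilde{f}_{j}\in\mathcal{I}_{Lip}^{j}$, write $\tilde{f}_{j}=i_{1}\circ f_{j}+i_{2}\circ f_{j}'$, where $i_{1},i_{2}$ are the canonical isometric linear injections into $G_{j}\oplus G_{j}'$; the ideal axioms (i) and (iii) for $\mathcal{I}_{Lip}^{j}$ then yield $\tilde{f}_{j}\in\mathcal{I}_{Lip}^{j}(X_{j},G_{j}\oplus G_{j}')$ with $\|\tilde{f}_{j}\|_{\mathcal{I}_{Lip}^{j}}\le\|f_{j}\|_{\mathcal{I}_{Lip}^{j}}+\|f_{j}'\|_{\mathcal{I}_{Lip}^{j}}$. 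This shows $T+T'\in BLip(\mathcal{I}_{Lip}^{1},\mathcal{I}_{Lip}^{2})(X_{1},X_{2};E)$.

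For the quasi-triangle inequality, the estimate from the construction above is
\begin{equation*}
\|T+T'\|_{BLip(\mathcal{I}_{Lip}^{1},\mathcal{I}_{Lip}^{2})}\le(BLip(R)+BLip(R'))\bigl(\|f_{1}\|_{\mathcal{I}_{Lip}^{1}}+\|f_{1}'\|_{\mathcal{I}_{Lip}^{1}}\bigr)\bigl(\|f_{2}\|_{\mathcal{I}_{Lip}^{2}}+\|f_{2}'\|_{\mathcal{I}_{Lip}^{2}}\bigr).
\end{equation*}
The trick to extract a bound of the form $C(\|T\|+\|T'\|)$ is a homogeneity normalization: for any $\varepsilon>0$, pick factorizations of $T$ and $T'$ whose products of the three norms are at most $(1+\varepsilon)\|T\|$ and $(1+\varepsilon)\|T'\|$ respectively, then rescale (replacing $f_{j}$ by $\lambda_{j}f_{j}$ and $R$ by the obvious compensating operator, which leaves $T$ unchanged) so that in each factorization all three norms coincide. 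Calling the common values $\alpha$ and $\alpha'$ we have $\alpha^{3}\le(1+\varepsilon)\|T\|$, $\alpha'^{3}\le(1+\varepsilon)\|T'\|$, and the display becomes $(\alpha+\alpha')^{3}$. The elementary inequality $(\alpha+\alpha')^{3}\le 4(\alpha^{3}+\alpha'^{3})$ yields $\|T+T'\|_{BLip(\mathcal{I}_{Lip}^{1},\mathcal{I}_{Lip}^{2})}\le 4(1+\varepsilon)(\|T\|+\|T'\|)$. Letting $\varepsilon\to 0$ gives the quasi-triangle inequality with constant $4$, completing the proof.
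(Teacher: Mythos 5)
Your proof is correct and follows essentially the same route as the paper: the same direct-sum construction $T+T'=\tilde{R}\circ(\tilde{f}_1,\tilde{f}_2)$ with $\tilde{f}_j=i_1\circ f_j+i_2\circ f_j'$ for closure under addition, and the same homogeneity-normalization trick to extract the quasi-triangle inequality with constant $4$. The only cosmetic difference is that you normalize all three factors of each factorization to a common value $\alpha$ and invoke $(\alpha+\alpha')^3\le 4(\alpha^3+\alpha'^3)$, whereas the paper pushes the norm onto the Lipschitz factors via $\|g_j\|_{\mathcal{I}_{Lip}^j}\le\|T\|^{1/2}$ and uses $(\sqrt{a}+\sqrt{b})^2\le 4(a+b)$; your explicit bookkeeping of $BLip(\tilde{R})\le BLip(R)+BLip(R')$ is in fact slightly more careful than the paper's at that step.
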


\begin{proof}
	It is clear that $0\in BLip(\mathcal{I}_{Lip}^{1},\mathcal{I}%
	_{Lip}^{2})(X_{1},X_{2};E)$. By inequality (\ref{pop2}), if $\left\Vert T\right\Vert
	_{BLip(\mathcal{I}_{Lip}^{1},\mathcal{I}_{Lip}^{2})}=0$ then $T\equiv 0.$
	Let $T,T^{\prime }\in BLip(\mathcal{I}_{Lip}^{1},\mathcal{I}%
	_{Lip}^{2})(X_{1},X_{2};E).$ Then there are Banach spaces $%
	G_{11},G_{12},G_{21},G_{22}$, $f_{j}\in \mathcal{I}_{Lip}^{j}(X_{j},G_{1j})$
	, $f_{j}^{\prime }\in \mathcal{I}_{Lip}^{j}(X_{j},G_{2j})$ and two-Lipschitz
	operators $R\in BLip_{0}(G_{11},G_{12};E)$, \newline 
	$R^{\prime }\in
	BLip_{0}(G_{21},G_{22};E)$ such that $T=R\circ (f_{1},f_{2})$ and $T^{\prime
	}=R^{\prime }\circ (f_{1}^{\prime },f_{2}^{\prime })$. For $j=1,2$ consider
	the linear operators $i_{1j}:G_{1j}\longrightarrow G_{1j}\times G_{2j}$, $%
	i_{2j}:G_{2j}\longrightarrow G_{1j}\times G_{2j}$ and the mapping $%
	u_{j}:X_{j}\longrightarrow G_{1j}\times G_{2j}$ defined by $i_{1j}(x,0)=x$, $%
	i_{2j}(0,y)=y$ and $u_{j}(x):=i_{1j}\circ f_{j}(x)+i_{2j}\circ f_{j}^{\prime
	}(x)$. By using simple calculation, we prove that $u_{j}\in
	Lip_{0}(X_{j},G_{1j}\times G_{2j})$ and $Lip(u_{j})\leq
	Lip(f_{j})Lip(f_{j}^{\prime })$, $j=1,2$. Now, let us define the
	two-Lipschitz operator $B:(G_{11}\times G_{21})\times (G_{12}\times
	G_{22})\longrightarrow E$ by $B:=R\circ (\pi _{11},\pi _{12})+R^{\prime
	}\circ (\pi _{21},\pi _{22}),$ where $\pi _{1j}:G_{1j}\times
	G_{2j}\longrightarrow G_{1j}$ and $\pi _{2j}:G_{1j}\times
	G_{2j}\longrightarrow G_{2j}$ are the projections mappings defined by $\pi
	_{1j}(x,y)=x$ and $\pi _{2j}(x,y)=y$. An easy calculations show that 
	\begin{equation*}
	B\circ (u_{1},u_{2})=T+T^{\prime }\in BLip(\mathcal{I}_{Lip}^{1},\mathcal{I}%
	_{Lip}^{2})(X_{1},X_{2};E).
	\end{equation*}%
	For every $\varepsilon >0$ we can choose $R$, $R^{\prime }$, $f_{j}$, $%
	f_{j}^{\prime }$ satisfies 
	\begin{equation*}
	BLip(R)\left\Vert f_{1}\right\Vert _{\mathcal{I}_{Lip}^{1}}\left\Vert
	f_{2}\right\Vert _{\mathcal{I}_{Lip}^{2}}\leq (1+\varepsilon )\left\Vert
	T\right\Vert _{BLip(\mathcal{I}_{Lip}^{1},\mathcal{I}_{Lip}^{2})},
	\end{equation*}%
	and 
	\begin{equation*}
	BLip(R^{\prime })\left\Vert f_{1}^{\prime }\right\Vert _{\mathcal{I}%
		_{Lip}^{1}}\left\Vert f_{2}^{\prime }\right\Vert _{\mathcal{I}%
		_{Lip}^{2}}\leq (1+\varepsilon )\left\Vert T^{\prime }\right\Vert _{BLip(%
		\mathcal{I}_{Lip}^{1},\mathcal{I}_{Lip}^{2})}.
	\end{equation*}%
	Now if we put 
	\begin{equation*}
	g_{j}=\frac{f_{j}}{\left\Vert f_{j}\right\Vert _{\mathcal{I}_{Lip}^{j}}}%
	\left\Vert T\right\Vert _{BLip(\mathcal{I}_{Lip}^{1},\mathcal{I}%
		_{Lip}^{2})}^{\frac{1}{2}}
	\end{equation*}%
	and 
	\begin{equation*}
	S=\frac{\left\Vert f_{1}\right\Vert _{\mathcal{I}_{Lip}^{1}}\left\Vert
		f_{2}\right\Vert _{\mathcal{I}_{Lip}^{2}}}{\left\Vert T\right\Vert _{BLip(%
			\mathcal{I}_{Lip}^{1},\mathcal{I}_{Lip}^{2})}}R,
	\end{equation*}%
	we obtain $T=S\circ (g_{1},g_{2})$ with $BLip(S)\leq (1+\varepsilon )$ and $%
	\left\Vert g_{j}\right\Vert _{\mathcal{I}_{Lip}^{j}}\leq \left\Vert
	T\right\Vert _{BLip(\mathcal{I}_{Lip}^{1},\mathcal{I}_{Lip}^{2})}^{\frac{1}{2%
	}}$ . By a similar argument, we get $T^{\prime }=S^{\prime }\circ
	(g_{1}^{\prime },g_{2}^{\prime })$ with $BLip(S^{\prime })\leq
	(1+\varepsilon )$ and $\left\Vert g_{j}^{\prime }\right\Vert _{\mathcal{I}%
		_{Lip}^{j}}\leq \left\Vert T^{\prime }\right\Vert _{BLip(\mathcal{I}%
		_{Lip}^{1},\mathcal{I}_{Lip}^{2})}^{\frac{1}{2}}$. In this case 
	\begin{equation*}
	\left\Vert u_{j}\right\Vert _{\mathcal{I}_{Lip}^{j}}\leq \left\Vert
	i_{1j}\circ g_{j}\right\Vert _{\mathcal{I}_{Lip}^{j}}+\left\Vert i_{2j}\circ
	g_{j}^{\prime }\right\Vert _{\mathcal{I}_{Lip}^{j}}\leq \left\Vert
	g_{j}\right\Vert _{\mathcal{I}_{Lip}^{j}}+\left\Vert g_{j}^{\prime
	}\right\Vert _{\mathcal{I}_{Lip}^{j}}.
	\end{equation*}%
	Although a standard calculation gives $BLip(B)\leq (1+\varepsilon )$ and
	then 
	\begin{center}
		$\left\Vert T+T^{\prime }\right\Vert _{BLip(\mathcal{I}_{Lip}^{1},\mathcal{I}%
			_{Lip}^{2})}=$
		
		$=\left\Vert B\circ (u_{1},u_{2})\right\Vert _{BLip(\mathcal{I}_{Lip}^{1},%
			\mathcal{I}_{Lip}^{2})}$
		
		$\leq (1+\varepsilon )\left( \left\Vert g_{1}\right\Vert _{\mathcal{I}%
			_{Lip}^{1}}+\left\Vert g_{1}^{\prime }\right\Vert _{\mathcal{I}%
			_{Lip}^{1}}\right) \left( \left\Vert g_{2}\right\Vert _{\mathcal{I}%
			_{Lip}^{2}}+\left\Vert g_{2}^{\prime }\right\Vert _{\mathcal{I}%
			_{Lip}^{2}}\right) $
		
		$\leq (1+\varepsilon )\left( \left\Vert T\right\Vert _{BLip(\mathcal{I}%
			_{Lip}^{1},\mathcal{I}_{Lip}^{2})}^{\frac{1}{2}}+\left\Vert T^{\prime
		}\right\Vert _{BLip(\mathcal{I}_{Lip}^{1},\mathcal{I}_{Lip}^{2})}^{\frac{1}{2%
		}}\right) ^{2}$
		
		$\leq (1+\varepsilon )\left( 2\left( \left\Vert T\right\Vert _{BLip(\mathcal{%
				I}_{Lip}^{1},\mathcal{I}_{Lip}^{2})}+\left\Vert T^{\prime }\right\Vert
		_{BLip(\mathcal{I}_{Lip}^{1},\mathcal{I}_{Lip}^{2})}\right) ^{\frac{1}{2}%
		}\right) ^{2}$
		
		$=4(1+\varepsilon )\left( \left\Vert T\right\Vert _{BLip(\mathcal{I}%
			_{Lip}^{1},\mathcal{I}_{Lip}^{2})}+\left\Vert T^{\prime }\right\Vert _{BLip(%
			\mathcal{I}_{Lip}^{1},\mathcal{I}_{Lip}^{2})}\right) .$
	\end{center}%
	Since $\varepsilon $ is arbitrary it follows that 
	\begin{equation*}
	\left\Vert T+T^{\prime }\right\Vert _{BLip(\mathcal{I}_{Lip}^{1},\mathcal{I}%
		_{Lip}^{2})}\leq 4\left( \left\Vert T\right\Vert _{BLip(\mathcal{I}%
		_{Lip}^{1},\mathcal{I}_{Lip}^{2})}+\left\Vert T^{\prime }\right\Vert _{BLip(%
		\mathcal{I}_{Lip}^{1},\mathcal{I}_{Lip}^{2})}\right) .
	\end{equation*}%
	It is easy to check that $\lambda T\in BLip(\mathcal{I}_{Lip}^{1},\mathcal{I}%
	_{Lip}^{2})(X_{1},X_{2};E)$ for every $\lambda \in \mathbb{K}$ and $T\in
	BLip(\mathcal{I}_{Lip}^{1},\mathcal{I}_{Lip}^{2})(X_{1},X_{2};E)$.
\end{proof}

The next proposition follows directly from the definitions and we omit the
proof.

\begin{proposition}
	(Ideal property). \textit{Let} $f\in Lip_{0}(Z,X_{1})$, $g\in
	Lip_{0}(W,X_{2})$ and $u\in \mathcal{L}(E,F)$. \textit{If} $T\in BLip(%
	\mathcal{I}_{Lip}^{1},\mathcal{I}_{Lip}^{2})(X_{1},X_{2};E)$\textit{, then }%
	the composition $u\circ T\circ (f,g)$ \textit{is }in $BLip(\mathcal{I}%
	_{Lip}^{1},\mathcal{I}_{Lip}^{2})(Z,W;F)$\textit{\ and} 
	\begin{equation*}
	\Vert u\circ T\circ (f,g)\Vert _{BLip(\mathcal{I}_{Lip}^{1},\mathcal{I}%
		_{Lip}^{2})}\leq \left\Vert u\right\Vert \Vert T\Vert _{BLip(\mathcal{I}%
		_{Lip}^{1},\mathcal{I}_{Lip}^{2})}Lip(f)Lip(g).
	\end{equation*}
\end{proposition}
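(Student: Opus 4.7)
The plan is to start from any factorization witnessing $T\in BLip(\mathcal{I}_{Lip}^{1},\mathcal{I}_{Lip}^{2})(X_{1},X_{2};E)$ and produce from it a factorization of $u\circ T\circ(f,g)$ of the same shape. Concretely, fix Banach spaces $G_{1},G_{2}$, Lipschitz operators $f_{j}\in \mathcal{I}_{Lip}^{j}(X_{j},G_{j})$ for $j=1,2$, and a two-Lipschitz operator $R\in BLip_{0}(G_{1},G_{2};E)$ with $T=R\circ(f_{1},f_{2})$. I would then note the coordinatewise identity
\begin{equation*}
u\circ T\circ(f,g)=u\circ R\circ(f_{1},f_{2})\circ(f,g)=(u\circ R)\circ(f_{1}\circ f,\,f_{2}\circ g),
\end{equation*}
which is immediate from the definition of the action of a two-Lipschitz operator on a pair.

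Next I would verify that each factor in this new presentation lives in the required class: the composition $u\circ R$ belongs to $BLip_{0}(G_{1},G_{2};F)$ with $BLip(u\circ R)\le \left\Vert u\right\Vert BLip(R)$; by the ideal property of $\mathcal{I}_{Lip}^{1}$, the map $f_{1}\circ f$ lies in $\mathcal{I}_{Lip}^{1}(Z,G_{1})$ with $\left\Vert f_{1}\circ f\right\Vert _{\mathcal{I}_{Lip}^{1}}\le Lip(f)\left\Vert f_{1}\right\Vert _{\mathcal{I}_{Lip}^{1}}$; and symmetrically $f_{2}\circ g\in \mathcal{I}_{Lip}^{2}(W,G_{2})$ with $\left\Vert f_{2}\circ g\right\Vert _{\mathcal{I}_{Lip}^{2}}\le Lip(g)\left\Vert f_{2}\right\Vert _{\mathcal{I}_{Lip}^{2}}$. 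This already shows $u\circ T\circ(f,g)\in BLip(\mathcal{I}_{Lip}^{1},\mathcal{I}_{Lip}^{2})(Z,W;F)$.

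For the norm estimate, multiplying the three inequalities above yields
\begin{equation*}
BLip(u\circ R)\left\Vert f_{1}\circ f\right\Vert _{\mathcal{I}_{Lip}^{1}}\left\Vert f_{2}\circ g\right\Vert _{\mathcal{I}_{Lip}^{2}}\le \left\Vert u\right\Vert Lip(f)Lip(g)\,BLip(R)\left\Vert f_{1}\right\Vert _{\mathcal{I}_{Lip}^{1}}\left\Vert f_{2}\right\Vert _{\mathcal{I}_{Lip}^{2}}.
\end{equation*}
The left-hand side is an upper bound for $\left\Vert u\circ T\circ(f,g)\right\Vert _{BLip(\mathcal{I}_{Lip}^{1},\mathcal{I}_{Lip}^{2})}$ (by definition of the infimum), so taking the infimum on the right-hand side over all admissible factorizations $T=R\circ(f_{1},f_{2})$ of $T$ delivers the required inequality. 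There is no substantial obstacle here: the argument is a direct transport of ideal properties through a prescribed factorization, and the only point worth double-checking is the associativity identity $R\circ(f_{1},f_{2})\circ(f,g)=R\circ(f_{1}\circ f,f_{2}\circ g)$, which is purely formal.
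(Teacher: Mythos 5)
Your argument is correct and is precisely the ``follows directly from the definitions'' proof that the paper alludes to but omits: you transport a factorization $T=R\circ(f_{1},f_{2})$ into the factorization $(u\circ R)\circ(f_{1}\circ f,\,f_{2}\circ g)$, using the ideal property of each $\mathcal{I}_{Lip}^{j}$ and the estimate $BLip(u\circ R)\leq\left\Vert u\right\Vert BLip(R)$, then pass to the infimum. No gaps.
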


The aim of the next proposition is to show the injectivity of the
two-Lipschitz operator ideal constructed by the factorization method. In
order to prove this result we need the following lemma. The proof of the
lemma is similar to the Lipschitz case (see \cite[Proposition 1.6]{Weav2}).

\begin{lemma}
	Let $X$ and $Y$ be complete pointed metric spaces and let $E$ be a Banach
	space. If $X_{0}$ and $Y_{0}$ are dense subsets of $X$ and $Y$ respectively,
	then every two-Lipschitz operator $T\in BLip_{0}\left( X_{0},Y_{0};E\right) $
	has a unique two-Lipschitz operator extension $\widetilde{T}\in
	BLip_{0}\left( X,Y;E\right) $ with $BLip(T)=BLip(\widetilde{T})$.
\end{lemma}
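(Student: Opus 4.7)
The plan is to define $\widetilde{T}$ via sequences in the dense product, then verify it is well-defined, satisfies the two-Lipschitz axioms, preserves the norm, and is unique. Given $(x,y)\in X\times Y$, I pick sequences $(x_{n})\subset X_{0}$ and $(y_{n})\subset Y_{0}$ with $x_{n}\to x$ and $y_{n}\to y$, and set $\widetilde{T}(x,y):=\lim_{n}T(x_{n},y_{n})$. To see the limit exists, I would exploit the fact that $T(x_{1},0)=T(0,x_{2})=0$ (and so $0$ plays the role of the base point in $X_{0}$ and $Y_{0}$) by writing
\begin{equation*}
T(x_{n},y_{n})-T(x_{m},y_{m})=\bigl[T(x_{n},y_{n})-T(x_{n},y_{m})-T(0,y_{n})+T(0,y_{m})\bigr]+\bigl[T(x_{n},y_{m})-T(x_{m},y_{m})-T(x_{n},0)+T(x_{m},0)\bigr],
\end{equation*}
and then applying \eqref{defbilip} to each bracket. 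This yields
\begin{equation*}
\left\Vert T(x_{n},y_{n})-T(x_{m},y_{m})\right\Vert \leq BLip(T)\bigl[d(x_{n},0)\,d(y_{n},y_{m})+d(y_{m},0)\,d(x_{n},x_{m})\bigr],
\end{equation*}
which tends to $0$ since the sequences are Cauchy and bounded. The same estimate, applied to two different approximating pairs, shows that $\widetilde{T}(x,y)$ does not depend on the sequences chosen.

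Next, taking $x_{n}=0$ (resp.\ $y_{n}=0$) as the constant sequence yields $\widetilde{T}(0,y)=0$ and $\widetilde{T}(x,0)=0$ directly. For the two-Lipschitz inequality, fix $x_{1},x_{1}^{\prime }\in X$ and $x_{2},x_{2}^{\prime }\in Y$, approximate them by sequences in $X_{0}$ and $Y_{0}$, and pass to the limit in
\begin{equation*}
\left\Vert T(x_{1}^{n},x_{2}^{n})-T(x_{1}^{n},x_{2}^{\prime n})-T(x_{1}^{\prime n},x_{2}^{n})+T(x_{1}^{\prime n},x_{2}^{\prime n})\right\Vert \leq BLip(T)\,d(x_{1}^{n},x_{1}^{\prime n})\,d(x_{2}^{n},x_{2}^{\prime n}).
\end{equation*}
Continuity of the metric on both sides and of the norm on the left gives $BLip(\widetilde{T})\leq BLip(T)$, while the reverse inequality is immediate because $\widetilde{T}$ extends $T$ and the supremum defining $BLip$ is taken over a larger set.

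For uniqueness, I note first that every two-Lipschitz operator is jointly continuous: the decomposition used above together with the vanishing on the base points yields
\begin{equation*}
\left\Vert T(x_{1},x_{2})-T(x_{1}^{\prime },x_{2}^{\prime })\right\Vert \leq BLip(T)\bigl[d(x_{1},x_{1}^{\prime })\,d(x_{2},0)+d(x_{1}^{\prime },0)\,d(x_{2},x_{2}^{\prime })\bigr],
\end{equation*}
so any two two-Lipschitz extensions are continuous maps on $X\times Y$ agreeing on the dense subset $X_{0}\times Y_{0}$ and must coincide. The only delicate point in the whole argument is ensuring that the Cauchy estimate for $\bigl(T(x_{n},y_{n})\bigr)$ absorbs the two independent directions of approximation without over-counting; this is exactly where the trick of inserting the vanishing terms $T(0,y_{n}),T(0,y_{m}),T(x_{n},0),T(x_{m},0)$ is indispensable, and it is the only step that truly uses the two-variable Lipschitz inequality rather than a single-variable one.
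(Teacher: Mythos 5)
Your proof is correct and is exactly the argument the paper has in mind (the paper omits the details, referring to the analogous one-variable extension in Weaver): extend by Cauchy sequences, using the insertion of the vanishing terms $T(0,\cdot)$ and $T(\cdot,0)$ to reduce increments of $T$ to the two-Lipschitz inequality, then pass to the limit for the norm equality and use the resulting joint continuity for uniqueness. The key estimate $\left\Vert T(x_{n},y_{n})-T(x_{m},y_{m})\right\Vert \leq BLip(T)\bigl[d(x_{n},0)\,d(y_{n},y_{m})+d(y_{m},0)\,d(x_{n},x_{m})\bigr]$ and its consequences all check out.
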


\begin{proposition}
	Let $\mathcal{I}_{Lip}^{1},\mathcal{I}_{Lip}^{2}$ be injective Lipschitz
	operator ideals, then the two-Lipschitz operator ideal $BLip(\mathcal{I}%
	_{Lip}^{1},\mathcal{I}_{Lip}^{2})$ is also injective.
\end{proposition}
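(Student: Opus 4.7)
The plan is to take $T \in BLip_0(X_1, X_2; E)$ with $J_E \circ T \in BLip(\mathcal{I}_{Lip}^1, \mathcal{I}_{Lip}^2)(X_1, X_2; \ell_\infty(B_{E^*}))$ and to manufacture a \emph{bona fide} factorization of $T$ itself through maps in $\mathcal{I}_{Lip}^1$ and $\mathcal{I}_{Lip}^2$. I would start from an arbitrary factorization
\[
J_E \circ T = R \circ (f_1, f_2),
\]
with Banach spaces $G_1, G_2$, $f_j \in \mathcal{I}_{Lip}^j(X_j, G_j)$, and $R \in BLip_0(G_1, G_2; \ell_\infty(B_{E^*}))$. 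The core idea is twofold: shrink the codomain of each $f_j$ to the closure of its image, and simultaneously pull $R$ back from $\ell_\infty(B_{E^*})$ to $E$ using the isometric injection $J_E$.

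First, I set $Y_j := \overline{f_j(X_j)} \subseteq G_j$, a closed (hence complete) pointed metric subspace with base point $0$. Let $\widetilde{f}_j: X_j \to Y_j$ denote the corestriction, so that $\iota_j \circ \widetilde{f}_j = f_j$ for the inclusion $\iota_j: Y_j \hookrightarrow G_j$, which is a metric injection. The injectivity of $\mathcal{I}_{Lip}^j$, in its reformulation via arbitrary metric injections (the Lipschitz analogue of the characterization established for the two-Lipschitz case earlier in Section~3), gives $\widetilde{f}_j \in \mathcal{I}_{Lip}^j(X_j, Y_j)$ with $\|\widetilde{f}_j\|_{\mathcal{I}_{Lip}^j} = \|f_j\|_{\mathcal{I}_{Lip}^j}$.

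Next, I would define $T': f_1(X_1) \times f_2(X_2) \to E$ by $T'(f_1(x_1), f_2(x_2)) := T(x_1, x_2)$. Well-definedness follows from the identity $J_E(T(x_1, x_2)) = R(f_1(x_1), f_2(x_2))$ together with the injectivity of $J_E$; the isometric property of $J_E$ then transfers the two-Lipschitz estimate for $R$ to yield $BLip(T') \leq BLip(R)$, while $T'(\cdot, 0) = T'(0, \cdot) = 0$ is clear. Since $f_j(X_j)$ is dense in $Y_j$ for $j = 1, 2$, the extension lemma just proved produces a unique $\widetilde{T'} \in BLip_0(Y_1, Y_2; E)$ extending $T'$ with $BLip(\widetilde{T'}) = BLip(T') \leq BLip(R)$. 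By construction $T = \widetilde{T'} \circ (\widetilde{f}_1, \widetilde{f}_2)$, so $T \in BLip(\mathcal{I}_{Lip}^1, \mathcal{I}_{Lip}^2)(X_1, X_2; E)$.

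Passing to the infimum over all factorizations of $J_E \circ T$ then yields
\[
\|T\|_{BLip(\mathcal{I}_{Lip}^1, \mathcal{I}_{Lip}^2)} \leq \|J_E \circ T\|_{BLip(\mathcal{I}_{Lip}^1, \mathcal{I}_{Lip}^2)},
\]
and the reverse inequality is automatic from the ideal property together with $\|J_E\| = 1$, delivering the norm equality required by the definition of injectivity. The main obstacle I anticipate is the pullback construction of $T'$: both the well-definedness and the $BLip$-bound hinge on exploiting $J_E$ as simultaneously an isometric embedding and an injection, and then matching this with the metric-injection reformulation of injectivity of $\mathcal{I}_{Lip}^j$ when corestricting $f_j$ to $Y_j$. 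Neither step is deep, but the bookkeeping has to line up carefully for the extension lemma to apply in the right ambient product.
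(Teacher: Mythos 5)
Your strategy coincides with the paper's: start from a factorization $J_{E}\circ T=R\circ (f_{1},f_{2})$, corestrict each $f_{j}$ to a smaller codomain, invoke injectivity of $\mathcal{I}_{Lip}^{j}$, pull $R$ back through the isometry $J_{E}$ on the image of $(f_{1},f_{2})$, and extend by density. There is, however, one genuine gap: you take the intermediate spaces to be $Y_{j}=\overline{f_{j}(X_{j})}$, which is only a closed pointed metric subspace of $G_{j}$ and in general not a linear subspace, hence not a Banach space. This breaks the argument at two points. First, the component $\mathcal{I}_{Lip}^{j}(X_{j},Y_{j})$ is not defined, since Lipschitz operator ideals have components indexed by a pointed metric space and a \emph{Banach} space; accordingly, the metric-injection reformulation of injectivity you invoke concerns linear metric injections $u\in \mathcal{L}(E,G)$ between Banach spaces and does not apply to the inclusion $Y_{j}\hookrightarrow G_{j}$. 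Second, even granting the corestriction, the factorization $T=\widetilde{T'}\circ (\widetilde{f}_{1},\widetilde{f}_{2})$ would not witness membership in $BLip(\mathcal{I}_{Lip}^{1},\mathcal{I}_{Lip}^{2})$, because the definition of that class explicitly requires the intermediate spaces $G_{1},G_{2}$ to be Banach spaces.

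The repair is exactly the paper's choice: take $Y_{j}=\overline{\mathrm{span}\left\{ f_{j}(X_{j})\right\} }$, a closed linear subspace and hence a Banach space, so that the canonical inclusion $i_{j}:Y_{j}\hookrightarrow G_{j}$ is a linear metric injection and injectivity of $\mathcal{I}_{Lip}^{j}$ gives $g_{j}\in \mathcal{I}_{Lip}^{j}(X_{j},Y_{j})$ for the corestriction $g_{j}$. Be aware that this choice makes your extension step more delicate: $f_{1}(X_{1})\times f_{2}(X_{2})$ is dense in $\overline{f_{1}(X_{1})}\times \overline{f_{2}(X_{2})}$ but not in $Y_{1}\times Y_{2}$, so the density lemma alone does not deliver the required $\widetilde{S}\in BLip_{0}(Y_{1},Y_{2};E)$ and an additional extension argument is needed (the paper's own write-up passes over this point as well). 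The remainder of your argument --- well-definedness of $T'$ via injectivity of $J_{E}$, the bound $BLip(T')\leq BLip(R)$, and the norm comparison yielding $\left\Vert T\right\Vert _{BLip(\mathcal{I}_{Lip}^{1},\mathcal{I}_{Lip}^{2})}=\left\Vert J_{E}\circ T\right\Vert _{BLip(\mathcal{I}_{Lip}^{1},\mathcal{I}_{Lip}^{2})}$, which the paper does not address --- is sound.
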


\begin{proof}
	If $T\in (BLip(\mathcal{I}_{Lip}^{1},\mathcal{I}_{Lip}^{2}))^{inj}\left(
	X_{1},X_{2};E\right) $, then there are Banach spaces $G_{1},G_{2}$,
	Lipschitz operator $f_{j}\in \mathcal{I}_{Lip}^{j}(X_{j},G_{j})$, $j=1,2$,
	and a two-Lipschitz operator $R\in BLip_{0}\left( G_{1},G_{2};\ell _{\infty
	}(B_{E^{\ast }})\right) \ $such that $J_{E}\circ T=R\circ (f_{1},f_{2})$.
	Consider the Lipschitz operator $g_{j}:X_{j}\longrightarrow \overline{%
		span\left\{ f_{j}(X_{j})\right\} }$ defined by $g_{j}(x_{j})=f_{j}(x_{j})$
	and the canonical inclusion $i_{j}:\overline{span\left\{
		f_{j}(X_{j})\right\} }\hookrightarrow G_{j},$ $j=1,2$. Then, $i_{j}\circ
	g_{j}=f_{j}\in \mathcal{I}_{Lip}^{j}(X_{j},G_{j})$ and by the injectivity of 
	$\mathcal{I}_{Lip}^{j}$ we get $g_{j}\in \mathcal{I}_{Lip}^{j}\left( X_{j},%
	\overline{span\left\{ f_{j}(X_{j})\right\} }\right) ,$ $j=1,2$. Now, define
	the two-Lipschitz operator $$S\in \mathcal{L}\left( span\left\{
	f_{1}(X_{1})\right\} ,span\left\{ f_{2}(X_{2})\right\} ;E\right) $$ by $
	S\left( f_{1}(x_{1}),f_{2}(x_{2})\right) =T(x_{1},x_{2})$ were extended to
	two-Lipschitz operator $\widetilde{S}$ from $\overline{span\left\{
		f_{1}(X_{1})\right\} }\times \overline{span\left\{ f_{2}(X_{2})\right\} }$
	into $E$. So $T=\widetilde{S}\circ \left( g_{1},g_{2}\right) \in BLip(%
	\mathcal{I}_{Lip}^{1},\mathcal{I}_{Lip}^{2})$.
\end{proof}

\subsection{Strong factorization method}

In the above method, if we consider a bilinear operator $R\in \mathcal{B}
(G_{1},G_{2};E)$ between Banach spaces instead of the two-Lipschitz operator 
$R$, we get another ideal of two-Lipschitz operators constructed from the
Lipschitz ideals $\mathcal{I}_{Lip1}$ and$\ \mathcal{I}_{Lip2},$ denoted by $
\mathcal{B}(\mathcal{I}_{Lip1},\mathcal{I}_{Lip2}).$ This method is called
the \emph{strong factorization method}. If $\mathcal{I}_{Lip}^{1},\mathcal{I}
_{Lip}^{2}$ are normed Lipschitz operator ideals and $T\in \mathcal{B}(
\mathcal{I}_{Lip}^{1},\mathcal{I}_{Lip}^{2})(X_{1},X_{2};E)$ we define 
\begin{equation*}
\left\Vert T\right\Vert _{\mathcal{B}(\mathcal{I}_{Lip}^{1},\mathcal{I}
	_{Lip}^{2})}=\inf \left\Vert R\right\Vert \left\Vert f_{1}\right\Vert _{
	\mathcal{I}_{Lip}^{1}}\left\Vert f_{2}\right\Vert _{\mathcal{I}_{Lip}^{2}},
\end{equation*}
where the infimum is taken over all possible factorizations $T=R\circ
(f_{1},f_{2})$.

The proof of the following proposition is similar to their factorization
method analogues.

\begin{proposition}
	If $\mathcal{I}_{Lip}^{1},\mathcal{I}_{Lip}^{2}$ are normed Lipschitz
	operator ideals, then \newline $\mathcal{B}(\mathcal{I}_{Lip}^{1},\mathcal{I}
	_{Lip}^{2})$ is a quasi-normed two-Lipschitz operator ideal with the
	quasi-norm $\left\Vert \cdot \right\Vert _{\mathcal{B}(\mathcal{I}_{Lip}^{1},
		\mathcal{I}_{Lip}^{2})}$.
\end{proposition}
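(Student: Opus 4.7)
The plan is to mirror the structure of the proofs already given for the factorization method variant, checking each of the three ideal axioms together with the quasi-norm axioms, and using at every turn the observation that a bilinear operator $R\in\mathcal{B}(G_1,G_2;E)$ automatically satisfies
\[
\|R(x_1,x_2)-R(x_1,x_2')-R(x_1',x_2)+R(x_1',x_2')\|=\|R(x_1-x_1',x_2-x_2')\|\le \|R\|\,\|x_1-x_1'\|\,\|x_2-x_2'\|,
\]
so that $BLip(R)\le\|R\|$ and, consequently, for any factorization $T=R\circ(f_1,f_2)$,
\[
BLip(T)\le \|R\|\,Lip(f_1)\,Lip(f_2)\le \|R\|\,\|f_1\|_{\mathcal{I}_{Lip}^{1}}\|f_2\|_{\mathcal{I}_{Lip}^{2}}.
\]
Passing to the infimum produces the analog of inequality (\ref{pop2}), namely $BLip(T)\le\|T\|_{\mathcal{B}(\mathcal{I}_{Lip}^{1},\mathcal{I}_{Lip}^{2})}$, and in particular the map $T\longmapsto\|T\|_{\mathcal{B}(\mathcal{I}_{Lip}^{1},\mathcal{I}_{Lip}^{2})}$ is definite.

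For linearity and the quasi-triangle inequality I would copy verbatim the construction used in the factorization-method proposition: given $T=R\circ(f_1,f_2)$ and $T'=R'\circ(f_1',f_2')$, set $u_j:=i_{1j}\circ f_j+i_{2j}\circ f_j'\in Lip_0(X_j,G_{1j}\times G_{2j})$ (with $\|u_j\|_{\mathcal{I}_{Lip}^{j}}\le \|f_j\|_{\mathcal{I}_{Lip}^{j}}+\|f_j'\|_{\mathcal{I}_{Lip}^{j}}$ by the ideal property), and define
\[
B:=R\circ(\pi_{11},\pi_{12})+R'\circ(\pi_{21},\pi_{22}).
\]
The crucial point, which differs from the previous version, is that $B$ is now genuinely \emph{bilinear} on $(G_{11}\times G_{21})\times(G_{12}\times G_{22})$ because the projections are linear and $R,R'$ were assumed bilinear; moreover $\|B\|\le \|R\|+\|R'\|$. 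Then $B\circ(u_1,u_2)=T+T'$ exhibits $T+T'$ as an element of $\mathcal{B}(\mathcal{I}_{Lip}^{1},\mathcal{I}_{Lip}^{2})$, and the rescaling trick already spelled out in the factorization-method proof — replacing $f_j$ by $f_j/\|f_j\|_{\mathcal{I}_{Lip}^{j}}\cdot\|T\|^{1/2}$ and absorbing constants into $R$ — yields a quasi-triangle inequality with constant $4$. Scalar multiplication is trivial.

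For the ideal property, given $f\in Lip_0(Z,X_1)$, $g\in Lip_0(W,X_2)$, $u\in\mathcal{L}(E,F)$ and $T=R\circ(f_1,f_2)\in\mathcal{B}(\mathcal{I}_{Lip}^{1},\mathcal{I}_{Lip}^{2})(X_1,X_2;E)$, I would write
\[
u\circ T\circ(f,g)=(u\circ R)\circ(f_1\circ f,\,f_2\circ g),
\]
observe that $u\circ R$ is still bilinear with $\|u\circ R\|\le\|u\|\,\|R\|$, and invoke the ideal property of $\mathcal{I}_{Lip}^{j}$ to place $f_j\circ(\cdot)$ in $\mathcal{I}_{Lip}^{j}$ with the expected norm bound. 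For the required two distinguished examples, I would write $f\cdot g\cdot e=B_e\circ(f,g)$, where $B_e:\mathbb{K}^2\to E$, $B_e(\alpha,\beta)=\alpha\beta e$, is manifestly bilinear with $\|B_e\|=\|e\|$, and $Id_{\mathbb{K}^2}=id_{\mathbb{K}^2}\circ(id_{\mathbb{K}},id_{\mathbb{K}})$ (noting $\|id_{\mathbb{K}^2}\|=1$), yielding $\|f\cdot g\cdot e\|_{\mathcal{B}(\mathcal{I}_{Lip}^{1},\mathcal{I}_{Lip}^{2})}\le Lip(f)\,Lip(g)\,\|e\|$ and $\|Id_{\mathbb{K}^2}\|_{\mathcal{B}(\mathcal{I}_{Lip}^{1},\mathcal{I}_{Lip}^{2})}\le 1$, the matching lower bound following from the already established $BLip\le\|\cdot\|_{\mathcal{B}(\mathcal{I}_{Lip}^{1},\mathcal{I}_{Lip}^{2})}$.

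The only step requiring a genuine argument, rather than routine book-keeping, is the quasi-triangle inequality, and the only place where one must remember the distinction from the factorization-method version is the verification that the amalgamating operator $B$ above stays in $\mathcal{B}(G_{11}\times G_{21},G_{12}\times G_{22};E)$ rather than just in $BLip_0$; this is automatic once one notes that bilinearity is preserved under composition with linear maps on the left and on both coordinates on the right, so no genuine obstacle arises. Consequently the full verification proceeds by inheriting the existing argument almost word-for-word, with $BLip(R)$ and $BLip(R')$ systematically replaced by $\|R\|$ and $\|R'\|$.
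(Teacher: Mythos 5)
Your proposal is correct and follows exactly the route the paper intends: the paper omits this proof, stating only that it is ``similar to their factorization method analogues,'' and your argument is precisely that adaptation, with the key observations that $BLip(R)\le\Vert R\Vert$ for bilinear $R$ and that the amalgamating operator $B$ and the compositions $u\circ R$ remain bilinear. (The only quibble is that $\Vert B\Vert\le\Vert S\Vert+\Vert S'\Vert\le 2(1+\varepsilon)$ after rescaling, which slightly worsens the quasi-norm constant, but this is immaterial and is inherited from the paper's own factorization-method computation.)
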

Now we investigate the symmetry of the strong factorization ideals.

\begin{theorem}
	Let $\mathcal{I}_{Lip}^{1}$ and $\mathcal{I}_{Lip}^{2}$ be Lipschitz
	operator ideals. Then the following assertions are equivalent.
	
	\begin{enumerate}
		\item[(1)] The two-Lipschitz operator ideal $\mathcal{B}(\mathcal{I}
		_{Lip}^{1},\mathcal{I}_{Lip}^{2})$ is symmetric.
		
		\item[(2)] $\mathcal{B}(\mathcal{I}_{Lip}^{1},\mathcal{I}_{Lip}^{2})=
		\mathcal{B}(\mathcal{I}_{Lip}^{2},\mathcal{I}_{Lip}^{1}).$
		
		\item[(3)] $\mathcal{I}_{Lip}^{1}=\mathcal{I}_{Lip}^{2}.$
	\end{enumerate}
\end{theorem}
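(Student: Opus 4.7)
The plan is to mirror the proof of the analogous Lipschitzization theorem, but adapted to the fact that the factorizing map $R$ in $\mathcal{B}(\mathcal{I}_{Lip}^{1},\mathcal{I}_{Lip}^{2})$ is a bounded bilinear operator between Banach spaces (not a general two-Lipschitz operator). The implication $(3)\Rightarrow(2)$ is immediate from the definition, so the real work is $(2)\Rightarrow(1)$ and $(1)\Rightarrow(3)$.

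For $(2)\Rightarrow(1)$, the idea is that flipping the two coordinates inside a factorization converts an $\mathcal{B}(\mathcal{I}_{Lip}^{1},\mathcal{I}_{Lip}^{2})$-factorization into an $\mathcal{B}(\mathcal{I}_{Lip}^{2},\mathcal{I}_{Lip}^{1})$-factorization. Concretely, given $T\in\mathcal{B}(\mathcal{I}_{Lip}^{1},\mathcal{I}_{Lip}^{2})(X,X;E)$ with a factorization $T=R\circ(f_{1},f_{2})$, $f_{j}\in\mathcal{I}_{Lip}^{j}(X,G_{j})$, $R\in\mathcal{B}(G_{1},G_{2};E)$, I would introduce the swapped bilinear $\widetilde{R}\in\mathcal{B}(G_{2},G_{1};E)$, $\widetilde{R}(z,y):=R(y,z)$, and observe that the flipped two-Lipschitz operator $\widetilde{T}(x_{1},x_{2}):=T(x_{2},x_{1})$ factors as $\widetilde{T}=\widetilde{R}\circ(f_{2},f_{1})$. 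This places $\widetilde{T}$ in $\mathcal{B}(\mathcal{I}_{Lip}^{2},\mathcal{I}_{Lip}^{1})$, and hypothesis (2) promotes it to $\mathcal{B}(\mathcal{I}_{Lip}^{1},\mathcal{I}_{Lip}^{2})$. Since this is a linear subspace, $T^{s}=\tfrac{1}{2}(T+\widetilde{T})$ lies there as well.

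For $(1)\Rightarrow(3)$, I would essentially reuse the auxiliary construction from the Lipschitzization case, but unpack the conclusion using the bilinear factorization. Pick $\phi\in\mathcal{I}_{Lip}^{1}(X,E)$, fix $f\in X^{\#}$ and $a\in X$ with $f(a)=1$, and set $T(x_{1},x_{2}):=\phi(x_{1})f(x_{2})$. This factors as $T=R\circ(\phi,f)$, where $R:E\times\mathbb{K}\to E$, $R(e,\alpha)=\alpha e$, is a bilinear operator of norm $1$; and since $f\in X^{\#}\subset\mathcal{I}_{Lip}^{2}(X,\mathbb{K})$ by axiom (ii) of a Lipschitz operator ideal, we obtain $T\in\mathcal{B}(\mathcal{I}_{Lip}^{1},\mathcal{I}_{Lip}^{2})(X,X;E)$. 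By (1), $T^{s}\in\mathcal{B}(\mathcal{I}_{Lip}^{1},\mathcal{I}_{Lip}^{2})(X,X;E)$, so there exist Banach spaces $G_{1},G_{2}$, Lipschitz maps $g_{j}\in\mathcal{I}_{Lip}^{j}(X,G_{j})$ and a bilinear $R'\in\mathcal{B}(G_{1},G_{2};E)$ with $T^{s}=R'\circ(g_{1},g_{2})$. Freezing the first coordinate at $a$ yields
\[
T^{s}(a,x)=R'(g_{1}(a),g_{2}(x))=(w\circ g_{2})(x),
\]
where $w:G_{2}\to E$, $w(z):=R'(g_{1}(a),z)$, is bounded and linear. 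By the ideal property of $\mathcal{I}_{Lip}^{2}$, the map $x\mapsto T^{s}(a,x)$ lies in $\mathcal{I}_{Lip}^{2}(X,E)$. On the other hand $2T^{s}(a,x)=\phi(a)f(x)+\phi(x)$, hence $\phi=2T^{s}(a,\cdot)-\phi(a)f$; since $\phi(a)f\in\mathcal{I}_{Lip}^{2}(X,E)$, we conclude $\phi\in\mathcal{I}_{Lip}^{2}(X,E)$. This gives $\mathcal{I}_{Lip}^{1}\subset\mathcal{I}_{Lip}^{2}$, and the reverse inclusion follows by swapping the roles of the two ideals in the construction above.

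The only delicate point is the extraction step in $(1)\Rightarrow(3)$: contrary to the Lipschitzization case (where one could use a linear evaluation $v:Lip_{0}(X,E)\to E$ directly on the mapping $T_{1}$), here there is no intrinsic ``Lipschitzized'' form of $T^{s}$; one has only a bilinear factorization with two independent Lipschitz factors, so the recovery of $\phi$ must be engineered by plugging the distinguished point $a$ into the correct slot of $R'$, so that what remains is a composition of $g_{2}\in\mathcal{I}_{Lip}^{2}$ with a bounded linear operator. Once this is set up correctly, the ideal property of $\mathcal{I}_{Lip}^{2}$ finishes the argument.
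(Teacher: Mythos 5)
Your proposal is correct and follows essentially the same route as the paper: $(2)\Rightarrow(1)$ by transposing the bilinear factor, and $(1)\Rightarrow(3)$ via the rank-one-type operator $T(x_{1},x_{2})=\phi(x_{1})f(x_{2})$ and evaluation at the distinguished point $a$. The only (cosmetic) difference is that you recover $\phi$ through the partial-evaluation operator $w(z)=R'(g_{1}(a),z)$ directly, whereas the paper routes the same map through the tensor-product linearization $S_{L}\circ\varphi$ with $\varphi(g_{2})=f_{1}(a)\otimes g_{2}$.
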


\begin{proof}
	$(2)\Longrightarrow (1)$ Let $T\in \mathcal{B}(\mathcal{I}_{Lip}^{1},
	\mathcal{I}_{Lip}^{2})(X,X;E)$. Then there are Banach spaces $G_{1},G_{2}$,
	Lipschitz operator $f_{j}\in \mathcal{I}_{Lip}^{j}(X_{j},G_{j})$ and
	bilinear operator $R\in \mathcal{B}(G_{1},G_{2};E)$ such that $T=R\circ
	(f_{1},f_{2})$. Consider the mappings $T^{\prime }\in BLip_{0}(X,X;E)$, $
	R^{\prime }\in \mathcal{B}(G_{2},G_{1};E)$ defined by $T^{\prime
	}(x_{1},x_{2}):=T(x_{2},x_{1})$ and $R^{\prime }(g_{2},g_{1}):=R(g_{1},g_{2})
	$ and notice that we have $T^{\prime }=R^{\prime }\circ (f_{1},f_{2})$. This
	implies that 
	\begin{equation*}
	T^{s}=\frac{1}{2}(T+T^{\prime })\in \mathcal{B}(\mathcal{I}_{Lip}^{1},
	\mathcal{I}_{Lip}^{2})(X,X;E).
	\end{equation*}
	$(1)\Longrightarrow (3)$ Let $\phi \in \mathcal{I}_{\mathcal{I}
		_{Lip}^{1}}(X,E)$. Fix $f\in X^{\#}$ and $a\in X$ with $f(a)=1$. Define $
	T\in BLip_{0}(X,X;E)$, $R\in \mathcal{L}(\mathbb{K},E;E)$ by $
	T(x_{1},x_{2}):=\phi (x_{1})f(x_{2})$ and $R(\alpha ,e):=\alpha e$. It
	follows that $R\circ (\phi ,f)=T\in \mathcal{B}(\mathcal{I}_{Lip}^{1},
	\mathcal{I}_{Lip}^{2})(X,X;E).$ By the hypothesis we have that $T^{s}\in 
	\mathcal{B}(\mathcal{I}_{Lip}^{1},\mathcal{I}_{Lip}^{2})(X,X;E)$, and then $
	T^{s}=S\circ (f_{1},f_{2})$ with $f_{j}\in \mathcal{I}_{Lip}^{j}(X_{j},G_{j})
	$ and $S\in \mathcal{B}(G_{1},G_{2};E)$. Consider $S_{L}\in \mathcal{L}(G_{1}
	\widehat{\otimes }_{\pi }G_{2},E)$ the linearization of the bilinear
	operator $S$ that is $S_{L}(g_{1}\otimes g_{2})=S(g_{1},g_{2})$ and the
	linear operator $\varphi \in \mathcal{B}(G_{2},G_{1}\widehat{\otimes }_{\pi
	}G_{2})$ by $\varphi (g_{2})=f_{1}(a)\otimes g_{2}$. Some direct computations
	show that 
	\begin{equation*}
	\phi =2S_{L}\circ \varphi \circ f_{2}-\phi (a)f\in \mathcal{I}
	_{Lip}^{2}(X,E).
	\end{equation*}
	Therefore $\mathcal{I}_{\mathcal{I}_{Lip}^{1}}\subset \mathcal{I}_{\mathcal{I
		}_{Lip}^{2}}$. Using a similar argument as above we get the reverse
	inclusion.
	
	The direction $(3)\Longrightarrow (2)$ is obvious.
\end{proof}

In the next result we will see that the Lipschitzization and strong factorization methods provide an inclusion relationship.

\begin{proposition}
	If $\mathcal{I}_{Lip}^{1},\mathcal{I}_{Lip}^{2}$ are normed Lipschitz
	operator ideals, then 
	\begin{equation*}
	\mathcal{B}(\mathcal{I}_{Lip}^{1},\mathcal{I}_{Lip}^{2})\subset \left[ 
	\mathcal{I}_{Lip}^{1},\mathcal{I}_{Lip}^{2}\right] ,
	\end{equation*}
	with $\left\Vert \cdot \right\Vert _{\left[ \mathcal{I}_{Lip}^{1},\mathcal{I}
		_{Lip}^{2}\right] }\leq \left\Vert \cdot \right\Vert _{\mathcal{B}(\mathcal{I
		}_{Lip}^{1},\mathcal{I}_{Lip}^{2})}$.
\end{proposition}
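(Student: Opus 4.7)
The plan is to take an arbitrary $T\in\mathcal{B}(\mathcal{I}_{Lip}^{1},\mathcal{I}_{Lip}^{2})(X_{1},X_{2};E)$ with strong factorization $T=R\circ(f_{1},f_{2})$, where $f_{j}\in\mathcal{I}_{Lip}^{j}(X_{j},G_{j})$ and $R\in\mathcal{B}(G_{1},G_{2};E)$, and check directly that each of the associated Lipschitz maps $T_{1}\in Lip_{0}(X_{1},Lip_{0}(X_{2},E))$ and $T_{2}\in Lip_{0}(X_{2},Lip_{0}(X_{1},E))$ lies in the corresponding Lipschitz ideal. The essential observation is that because $R$ is bilinear (in particular linear in each variable separately), the slices of $T$ factor through an honest linear operator between Banach spaces, so we can invoke the ideal property of $\mathcal{I}_{Lip}^{j}$ rather than having to produce a new Lipschitz factorization.

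More precisely, I would introduce two linear operators. Let $\psi_{2}\colon G_{1}\longrightarrow Lip_{0}(X_{2},E)$ be defined by $\psi_{2}(g_{1})=R(g_{1},\cdot)\circ f_{2}$, and $\psi_{1}\colon G_{2}\longrightarrow Lip_{0}(X_{1},E)$ by $\psi_{1}(g_{2})=R(\cdot,g_{2})\circ f_{1}$. A direct computation using the bilinearity of $R$ shows that $\psi_{1},\psi_{2}$ are indeed linear and bounded, with the estimates
\begin{equation*}
\|\psi_{2}\|\leq \|R\|\,Lip(f_{2}),\qquad \|\psi_{1}\|\leq \|R\|\,Lip(f_{1}).
\end{equation*}
Evaluating at points, one immediately checks $T_{1}=\psi_{2}\circ f_{1}$ and $T_{2}=\psi_{1}\circ f_{2}$. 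Since $f_{1}\in\mathcal{I}_{Lip}^{1}(X_{1},G_{1})$ and $\psi_{2}\in\mathcal{L}(G_{1},Lip_{0}(X_{2},E))$, the ideal property for $\mathcal{I}_{Lip}^{1}$ yields $T_{1}\in\mathcal{I}_{Lip}^{1}(X_{1},Lip_{0}(X_{2},E))$ with
\begin{equation*}
\|T_{1}\|_{\mathcal{I}_{Lip}^{1}}\leq \|\psi_{2}\|\,\|f_{1}\|_{\mathcal{I}_{Lip}^{1}}\leq \|R\|\,\|f_{1}\|_{\mathcal{I}_{Lip}^{1}}\|f_{2}\|_{\mathcal{I}_{Lip}^{2}},
\end{equation*}
using $Lip(f_{2})\leq \|f_{2}\|_{\mathcal{I}_{Lip}^{2}}$. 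The analogous argument for $T_{2}$ gives $T_{2}\in\mathcal{I}_{Lip}^{2}(X_{2},Lip_{0}(X_{1},E))$ with the symmetric estimate.

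Combining the two bounds via the definition \eqref{norm} of $\|\cdot\|_{[\mathcal{I}_{Lip}^{1},\mathcal{I}_{Lip}^{2}]}$ shows $T\in[\mathcal{I}_{Lip}^{1},\mathcal{I}_{Lip}^{2}](X_{1},X_{2};E)$ and
\begin{equation*}
\|T\|_{[\mathcal{I}_{Lip}^{1},\mathcal{I}_{Lip}^{2}]}\leq \|R\|\,\|f_{1}\|_{\mathcal{I}_{Lip}^{1}}\|f_{2}\|_{\mathcal{I}_{Lip}^{2}}.
\end{equation*}
Taking the infimum over all strong factorizations $T=R\circ(f_{1},f_{2})$ yields $\|T\|_{[\mathcal{I}_{Lip}^{1},\mathcal{I}_{Lip}^{2}]}\leq \|T\|_{\mathcal{B}(\mathcal{I}_{Lip}^{1},\mathcal{I}_{Lip}^{2})}$, which is the desired conclusion.

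There is no real obstacle here; the entire argument hinges on the fact that bilinearity of $R$ converts the two-variable factorization into two honest linear-operator compositions that the Lipschitz ideal can absorb. The only mild subtlety worth double-checking is that $\psi_{2}$ (and $\psi_{1}$) genuinely takes values in $Lip_{0}(X_{2},E)$ with $\psi_{2}(0)=0$ and the correct Lipschitz bound, which follows from $f_{2}(0)=0$ and the bilinearity of $R$; once this is noted, the rest is an application of axiom (iii') for normed Lipschitz ideals.
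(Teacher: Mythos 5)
Your proposal is correct and follows essentially the same route as the paper: the paper likewise defines the linear operators $u_{1}(g_{1})(x_{2})=R(g_{1},f_{2}(x_{2}))$ and $u_{2}(g_{2})(x_{1})=R(f_{1}(x_{1}),g_{2})$ (your $\psi_{2}$ and $\psi_{1}$), observes $T_{1}=u_{1}\circ f_{1}$ with $\left\Vert u_{1}\right\Vert \leq \left\Vert R\right\Vert Lip(f_{2})$, and applies the ideal property before passing to the infimum. Your write-up is, if anything, slightly tidier in tracking the ideal norms of $f_{1},f_{2}$.
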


\begin{proof}
	If $T\in \mathcal{B}(\mathcal{I}_{Lip}^{1},\mathcal{I}%
	_{Lip}^{2})(X_{1},X_{2};E)$, for $\varepsilon >0$ choose Banach spaces $
	G_{1},G_{2}$, bilinear operator $R:G_{1}\times G_{2}\longrightarrow E$ and $
	f_{j}\in \mathcal{I}_{Lip}^{j}(X_{j},G_{j})$ , $j=1,2$ such that $T=R\circ
	(f_{1},f_{2})$ with $\left\Vert R\right\Vert \left\Vert f_{1}\right\Vert _{
		\mathcal{I}_{Lip}^{1}}\left\Vert f_{2}\right\Vert _{\mathcal{I}
		_{Lip}^{2}}\leq \varepsilon +\left\Vert T\right\Vert _{\mathcal{B}(\mathcal{I}_{Lip}^{1},\mathcal{I}_{Lip}^{2})}$. Consider the linear operators, 
	\begin{equation*}
	\begin{array}{cc}
	u_{1}:G_{1}\longrightarrow Lip_{0}(X_{2},E), & u_{2}:G_{2}\longrightarrow
	Lip_{0}(X_{2},E),
	\end{array}
	\end{equation*}
	defined by $u_{1}(g_{1})(x_{2})=R(g_{1},f_{2}(x_{2}))$ and $
	u_{2}(g_{2})(x_{1})=R(f_{1}(x_{1}),g_{2})$ for every $g_{1}\in G_{1},$ $
	g_{2}\in G_{2},$ $x_{1}\in X_{1},$ $x_{2}\in X_{2}.$ A simple calculation
	gives $u_{1}\circ f_{1}=\left( R\circ (f_{1},f_{2})\right) _{1}$ and $
	\left\Vert u_{1}\right\Vert \leq \left\Vert R\right\Vert Lip(f_{2})$. Which
	means that $T_{1}\in \mathcal{I}_{Lip}^{1}(X_{1},Lip_{0}(X_{2},E))$ and 
	\begin{equation*}
	\left\Vert T_{1}\right\Vert _{\mathcal{I}_{Lip}^{1}}\leq \left\Vert
	u_{1}\right\Vert Lip(f_{1})\leq \varepsilon +\left\Vert T\right\Vert _{
		\mathcal{B}(\mathcal{I}_{Lip}^{1},\mathcal{I}_{Lip}^{2})}.
	\end{equation*}
	Used the same argument, we obtain $T_{2}\in \mathcal{I}
	_{Lip}^{2}(X_{2},Lip_{0}(X_{1},E))$ and 
	\begin{equation*}
	\left\Vert T_{2}\right\Vert _{\mathcal{I}_{Lip}^{2}}\leq \left\Vert
	u_{2}\right\Vert Lip(f_{2})\leq \varepsilon +\left\Vert T\right\Vert _{
		\mathcal{B}(\mathcal{I}_{Lip}^{1},\mathcal{I}_{Lip}^{2})}.
	\end{equation*}
	Which implies that $T\in \left[ \mathcal{I}_{Lip1},\mathcal{I}_{Lip2}\right]
	(X_{1},X_{2};E)$ and $\left\Vert T\right\Vert _{\left[ \mathcal{I}_{Lip}^{1},
		\mathcal{I}_{Lip}^{2}\right] }\leq \left\Vert T\right\Vert _{\mathcal{B}(\mathcal{I}_{Lip}^{1},\mathcal{I}_{Lip}^{2})}.$
\end{proof}

\medskip

\end{document}